\def\dim{\operatorname{dim}}
\def\Ind{\operatorname{Ind}}
\def\C{\mathbb{C}}
\def\Q{\mathbb{Q}}
\def\R{\mathbb{R}}
\def\N{\mathbb{N}}
\def\Z{\mathbb{Z}}
\def\KK{\mathcal{K}}
\def\UU{\mathcal{U}}
\def\a{\mathfrak{a}}
\def\b{\mathfrak{b}}
\def\c{\mathfrak{c}}
\def\m{\mathfrak{m}}
\def\n{\mathfrak{n}}
\def\p{\mathfrak{p}}
\def\q{\mathfrak{q}}
\def\g{\mathfrak{g}}
\def\h{\mathfrak{h}}
\def\k{\mathfrak{k}}
\def\l{\mathfrak{l}}
\def\s{\mathfrak{s}}
\def\o{\mathfrak{o}}
\def\ol{\overline}
\def\sub{\subseteq}
\newtheorem{thm}{Theorem}[section]
\newtheorem{cor}[thm]{Corollary}
\newtheorem{lemma}[thm]{Lemma}
\newtheorem{prop}[thm]{Proposition}
\newtheorem{conj}[thm]{Conjecture}
\theoremstyle{definition}
\newtheorem{definition}[thm]{Definition}
\theoremstyle{remark}
\newtheorem{remark}[thm]{Remark}
\newtheorem{example}[thm]{Example}
\numberwithin{equation}{section}
\begin{document}
	
	\title{On the Cartan-Helgason theorem for supersymmetric pairs}
	
	\author[Alexander Sherman]{Alexander Sherman}
	
	\begin{abstract}
		Let $(\g,\k)$ be a supersymmetric pair arising from a finite-dimensional, symmetrizable Kac-Moody superalgebra $\g$.  An important branching problem is to determine the finite-dimensional highest weight $\g$-modules which admit a $\k$-coinvariant, and thus appear as functions in a corresponding supersymmetric space $\mathcal{G}/\KK$.  This is the super-analogue of the Cartan-Helgason theorem.  We solve this problem by generalizing the theory of odd reflections for superalgebras to that of singular reflections, and study their effect on representations.  This allows us to reduce to the case of a rank one pair.  An explicit presentation of spherical weights is provided for every pair when $\g$ is indecomposable.
	\end{abstract}
	
	\maketitle
	\pagestyle{plain}

	\section{Introduction}	
	  Let $\g$ be a simple complex Lie algebra, and let $\k\sub\g$ be a symmetric subalgebra, i.e.~$\k$ is the fixed points of an involution on $\g$.  The Cartan-Helgason theorem (see \cite{H1}) describes which irreducible $\g$-modules admit a $\k$-invariant (co)vector, and thus define functions on a corresponding symmetric space $G/K$.  As a generalization of the Peter-Weyl theorem, this allows one to describe the space of polynomial functions on $G/K$, and similarly to describe the $L^2$ functions on $G_c/K_c$, where $G_c$ and $K_c$ are compact real forms of $G$ and $K$, respectively.
	  
	  \subsection{Questions in the super-setting}  We are interested in the analogous question in the super setting.  Let $\g$ be a symmetrizable Kac-Moody superalgebra, i.e.~$\g=\g\l(m|n),\o\s\p(m|2n)$, or simple exceptional.  (We work with $\g\l(m|n)$ instead of $\s\l(m|n)$ for simplicity.)  Let $\theta$ be an involution on $\g$  preserving an invariant form, and giving fixed points $\k$ and $(-1)$-eigenspace $\p$.  We call $(\g,\k)$ a supersymmetric pair. Broadly speaking, we are interested in the following branching problem:
	  
	  \
	  
	  \textbf{Question A:} For which finite-dimensional, indecomposable $\g$-modules $V$ do we have \linebreak $(V^*)^{\k}\neq0$?
	  
	  \
	  
	  By Frobenius reciprocity, this question is directly related to when we can realize $V$ inside $\C[\mathcal{G}/\KK]$ (the algebra of polynomial functions on $\mathcal{G}/\KK$), where $\mathcal{G},\KK$ are global forms of $\g$ and $\k$.  We immediately see a difference between the super setting and the classical setting in that representations need not be semisimple, forcing us to consider indecomposable representations.
	  
	  A full answer to question A is extremely challenging, if not hopeless, if only because of the wild nature of representations of $\g$.  However, in this text we make progress in answering question A in the case when $V$ is simple, and more generally when it is a highest weight module, meaning a module with a cyclic highest weight vector.  We write these questions more explicitly for later reference.
	  
	  \
	  
	  \textbf{Question B:} For which finite-dimensional highest weight $\g$-modules $V$ do we have \linebreak $(V^*)^{\k}\neq0$?
	  
	  \
	  
	  \textbf{Question C:} For which finite-dimensional simple $\g$-modules $V$ do we have $(V^*)^{\k}\neq0$?
	  
	  \
	  
	  Questions B and C are of considerable importance as branching problems in representation theory, and a full understanding of their answer would yield great insight into Question A and the structure of $\C[\mathcal{G}/\KK]$.
	  
	  As already stated, the answer to these questions in the classical situation is given by the Cartan-Helgason theorem, see \cite{H1}.  In that case, representations are completely reducible, and thus one only needs to look at simple modules, making questions A, B, and C equivalent.
	  
	  \subsection{Previously known results} Let $\a\sub\p_{\ol{0}}$ be a maximal abelian subspace (a Cartan subspace).  Then we obtain a restricted root system $\Delta\sub\a^*$, and the choices of positive systems for $\Delta$ are equivalent to choices of simple roots $\Sigma\sub\Delta$, which we also refer to as a base. 
	  
	  Given a base $\Sigma\sub\Delta$, let $\n_{\Sigma}$ be the corresponding nilpotent subalgebra, which is generated by $\g_{\alpha}$ for $\alpha\in\Sigma$.  Then we \textbf{assume} the Iwasawa decomposition holds, i.e.~that $\g=\k\oplus\a\oplus\n_{\Sigma}$.  Let $\b$ be any Borel subalgebra of $\g$ containing $\a\oplus\n$; we call $\b$ an Iwasawa Borel subalgebra of $\g$.  Let $\q_{\Sigma}=\c(\a)\oplus\n_{\Sigma}$, a parabolic subalgebra which contains any Iwasawa Borel subalgebra that contains $\a\oplus\n_{\Sigma}$.  (Here $\c(\a)$ denotes the centralizer of $\a$ in $\g$.)
	  
	  In \cite{AS} it was shown that if $V$ is a $\b$-highest weight module of weight $\lambda$ with respect to an Iwasawa Borel $\b$ such that $(V^*)^{\k}\neq0$, then in fact $\lambda\in\a^*$, and the $\b$-highest weight vector is actually a $\q_{\Sigma}$-eigenvector.  Further, $\dim(V^*)^{\k}\leq1$. The proof of this result works just like in the classical setting of the Cartan-Helgason theorem.  Since the parabolic $\q_{\Sigma}$ is determined by our base $\Sigma$, we define $P_{\Sigma}^+\sub\a^*$ to be the \emph{$\Sigma$-spherical weights}, i.e.~those $\lambda\in\a^*$ for which there exists a finite-dimensional $\g$-module $V$ of highest weight $\lambda$ with respect to $\q_{\Sigma}$ such that $(V^*)^{\k}\neq0$. 
	  
	  In \cite{AS} they use a super-generalization of the Harish-Chandra $c$-function to prove a partial converse: if $\lambda\in\a^*$ is integral and `high enough' with respect to $\Sigma$, in a sense we do not discuss here, then in fact $\lambda\in P_{\Sigma}^+$.
	  
	  \subsection{Main results}  We improve on the work in \cite{AS} by computing entirely the $\Sigma$-spherical weights $P_{\Sigma}^+$ with respect to particular bases $\Sigma$ for every supersymmetric pair.  We state this more precisely below:
	  \begin{thm}
	  	For the following supersymmetic pairs, we determine $P_{\Sigma}^+$ for every base $\Sigma\sub\Delta$, thus giving a full answer to question B:
	  	\[
	  	(\g\l(m|n),\g\l(r|s)\times\g\l(m-r|n-s)), \ \ \ (\o\s\p(m|2n),\o\s\p(r|2n)\times\o\s\p(m-r|2n)),
	  	\]
	  	\[
	  	(\o\s\p(m|2n),\o\s\p(m|2s)\times\o\s\p(m|2n-2s)), \ \ \	(\mathfrak{d}(2,1:a),\o\s\p(2|2)\times\s\o(2)),
	  	\]
	  	\[
       (\a\b(1|3),\g\o\s\p(2|4)) \ \ \ (\a\g(1|2),\mathfrak{d}(2,1;a)), \ \ \ (\a\b(1|3),\mathfrak{d}(2,1;2)\times\s\l(2)).
	  	\]
	  	For the remaining supersymmetric pairs, which are
	  	\[
	  	 \ \ \ (\o\s\p(2m|2n),\g\l(m|n)), \ \ \	(\g\l(m|2n),\o\s\p(m|2n)), 
	  	\]
	  	\[
	  	(\o\s\p(m|2n),\o\s\p(m-r|2n-2s)\times\o\s\p(r|2s)), \ \ \ (\a\b(1|3),\s\l(1|4)),
	  	\]
	  	we compute the spherical weights with respect to certain positive systems.  Thus we obtain the answer to question B for certain positive systems.
	  \end{thm}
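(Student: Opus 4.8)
The plan is to build a theory of \emph{singular reflections}, generalizing the odd reflections of Lie superalgebras, and to track precisely how they act on bases $\Sigma\sub\Delta$ and on $\q_{\Sigma}$-highest weight modules. By \cite{AS} we already know that if $V$ is a finite-dimensional highest weight module with $(V^*)^{\k}\neq0$, then its highest weight $\lambda$ lies in $\a^*$ and the highest weight vector is a $\q_{\Sigma}$-eigenvector; thus membership of $\lambda$ in $P_{\Sigma}^+$ is genuinely a statement about the restricted datum $(\a^*,\Delta,\Sigma)$ together with the centralizer $\c(\a)$. First I would attach, to each isotropic (or otherwise singular) simple restricted root $\alpha\in\Sigma$, a reflection carrying $\Sigma$ to a new base $\Sigma'$, and prove a super-analogue of the odd-reflection lemma: a $\q_{\Sigma}$-highest weight module $V$ of weight $\lambda$ is again a $\q_{\Sigma'}$-highest weight module, now of weight $\lambda$, $\lambda-\alpha$, or (when $2\alpha\in\Delta$) $\lambda-2\alpha$, according to an explicit criterion on the action of the root spaces of $\alpha$ on the highest line. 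The essential point is that this criterion must be shown to be compatible with the existence of a $\k$-coinvariant, so that it transports the condition ``$\lambda\in P_{\Sigma}^+$'' to a corresponding condition on the transformed weight in $P_{\Sigma'}^+$.

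Using this, I would reduce to rank one. Two bases joined by a chain of singular reflections then carry the same spherical data, in the sense just described, so it suffices to compute $P_{\Sigma}^+$ for one well-chosen base in each connected component of the graph of bases under singular reflections. Choosing that base adapted to a decomposition of $\g$ into rank one constituents --- each generated by a single simple restricted root together with $\c(\a)$ --- the condition $\lambda\in P_{\Sigma}^+$ should decouple into a conjunction of rank one conditions, one per simple restricted root, exactly as in the classical Cartan--Helgason theorem where sphericality amounts to $\langle\lambda,\alpha^{\vee}\rangle\in\Z_{\geq 0}$ for all simple $\alpha$, together with a parity constraint when $2\alpha$ is a root. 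Then I would solve the finitely many rank one supersymmetric pairs by hand: these are small (built from $\g\l(1|1)$, low-rank $\o\s\p$, $\s\l(2)$, and pieces of $\mathfrak{d}(2,1;a)$ and the exceptional pairs), so for each I would write down the finite-dimensional $\q_{\Sigma}$-highest weight modules explicitly and determine which highest weights admit a $\k$-coinvariant, using the Harish-Chandra $c$-function of \cite{AS} in the borderline integral cases. Assembling these rank one answers along a base in each connected component yields the asserted presentation of $P_{\Sigma}^+$.

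I expect the main obstacle to be twofold. First, establishing that the odd/singular-reflection criterion really does interact correctly with $\k$-coinvariance: odd reflections are well understood for ordinary highest weight modules, but here we are in the relative situation of $\q_{\Sigma}$-eigenvectors and of the space $(V^*)^{\k}$, and following a coinvariant through a change of Iwasawa Borel is delicate --- particularly when $\Delta$ is non-reduced or $\c(\a)$ fails to be abelian, since then the reflection may genuinely shift the weight and one must verify no coinvariant is created or destroyed. Second, for the pairs in the second list the graph of bases under singular reflections is \emph{not} connected (or the rank one reduction is only available along some of the bases), so the method reaches only certain positive systems; identifying exactly which positive systems are reachable, and checking that the rank one decoupling is valid for them, is precisely where the argument is necessarily partial and the conclusion correspondingly weaker.

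Finally I would record the outcome pair by pair, organizing the rank one inputs into the explicit inequalities and parity/lattice conditions defining $P_{\Sigma}^+$, and verify in the first list that the singular-reflection graph is connected so that the resulting description is independent of $\Sigma$ in the precise sense that the formula for one base determines that for every base.
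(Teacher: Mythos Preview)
Your overall strategy---singular reflections on bases, tracking their effect on $\q_{\Sigma}$-highest weight modules with $\k$-coinvariants, and reducing to rank one---is correct and matches the paper's approach. However, you misidentify the central obstruction and your reflection formulae are off.

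The key phenomenon you miss is that singular roots come in two flavours: isotropic and \emph{non-isotropic}. For an isotropic singular $\alpha\in\Sigma$, the reflection is well-behaved: $V_{\Sigma}(\lambda)\cong V_{r_\alpha\Sigma}(r_\alpha\lambda)$ with $r_\alpha\lambda=\lambda$ or $\lambda-2\alpha$ (not $\lambda-\alpha$). But when $\alpha$ is singular non-isotropic with $m_\alpha=(0|2n)$, there is a range of \emph{$\alpha$-critical weights}, namely those with $\lambda(h_\alpha)/2\in\{n+1,\dots,2n\}$, for which $V_{\Sigma}(\lambda)$ is \emph{not} highest weight with respect to $r_\alpha\Sigma$ at all---it properly contains $V_{\Sigma}(\lambda-2k\alpha)$ and is never simple. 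For such $\lambda$ there is no isomorphism $V_{\Sigma}(\lambda)\cong V_{r_\alpha\Sigma}(\mu)$, and the transport of spherical data across the reflection breaks down. This, not failure of connectivity of the base graph, is what separates the two lists: the pairs in the first list either have no singular roots, only isotropic singular roots, or (in the low-rank exceptional cases) non-isotropic singular roots whose critical range is never hit by dominant weights for sign reasons; the pairs in the second list have non-isotropic singular roots with genuine critical weights, so one can only compute $P_{\Sigma}^+$ for bases reachable by a single such reflection (via an ad hoc lemma controlling integrability across one step).

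Two smaller points: the rank one analysis in the paper does not use the $c$-function of \cite{AS} but rather explicit spherical representations (the standard module, $S^2\Bbbk^{m|2n}/\Bbbk$, the adjoint), exploiting that $P_{\Sigma}^+$ is a monoid; and the interaction of reflections with $\k$-coinvariance is handled not by following a coinvariant through a change of Borel but by working with the universal object $V_{\Sigma}(\lambda)$ (the minimal quotient of the parabolic Verma retaining a $\k$-coinvariant) and a lemma that detects when subspaces of $M_{\{\alpha\}}(\lambda)\sub M_{\Sigma}(\lambda)$ generate submodules killed by the coinvariant.
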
  
 
 	See Section \ref{section tables} for tables describing the sets $P_{\Sigma}^+$ explicitly for each pair $(\g,\k)$.
 	
 	\subsection{Method of computation} Our method of computation is directly inspired by the ideas of Serganova used in \cite{S} to compute the dominant weights of a Kac-Moody superalgebra $\g$.  Indeed, her result is a special case of the arguments we give in the case of the diagonal pair $(\g\times\g,\g)$.  
 	
 	The idea is that for any base $\Sigma\sub\Delta$ and any $\lambda\in\a^*$, we may define a $\g$-module $V_{\Sigma}(\lambda)$ which is of highest weight $\lambda$ with respect to $\Sigma$ and admits a $\k$-coinvariant (i.e.~$(V_{\Sigma}(\lambda)^*)^{\k}\neq0$).  Further, we have that $\lambda\in P_{\Sigma}^+$ if and only if $V_{\Sigma}(\lambda)$ is finite-dimensional, i.e.~integrable.  Checking integrability must be done on all so-called `principal roots' $\Pi\sub\Delta$, which are those that generate the even part $\Delta_0$ of the root system $\Delta$.  If $\Pi\sub\Sigma$, which happens in certain cases (e.g.~for the `standard' choice of $\Sigma$ when $(\g,\k)=(\g\l(m|2n),\o\s\p(m|2n))$), then integrability becomes easy to check, giving a straightforward description of $P_{\Sigma}^+$. 
 	 	
 	\subsection{Singular reflections} In most cases we have $\Pi\not\sub\Sigma$, and thus we must reflect $\Sigma$ in so-called \emph{singular roots} to deal with principal roots not lying in $\Sigma$.  We say $\alpha\in\Delta$ is a singular root if both $\alpha$ and $2\alpha$ are not restrictions of even roots of $\g$. Singular roots come in 2 flavours: isotropic, meaning that $(\alpha,\alpha)=0$, and non-isotropic, so that $(\alpha,\alpha)\neq0$.   If $\alpha\in\Sigma$ is singular, we define $r_{\alpha}\Sigma$ to be the base associated to the positive system $(\Delta\setminus\{\alpha\})\sqcup\{-\alpha\}$.  See Lemma \ref{lemma sing reflection on base} for an explicit description of $r_{\alpha}\Sigma$.
 	
 	As we show, if $\alpha$ is singular isotropic, then the reflection in $\alpha$, which we write as $r_{\alpha}$, is well behaved on representations.  Namely, we have $V_{\Sigma}(\lambda)\cong V_{r_{\alpha}\Sigma}(r_{\alpha}\lambda)$, where $r_{\alpha}\lambda=\lambda$ if $(\lambda,\alpha)=0$, and $r_{\alpha}\lambda=\lambda-2\alpha$ if $(\lambda,\alpha)\neq0$. 
 		
 	Things become more interesting when $\alpha$ is singular non-isotropic.  In this case $\g_{\alpha}$ will be of dimension $(0|2n)$ for some $n\in\Z_{\geq0}$.  Write $h_{\alpha}\in\a^*$ for the coroot of $\alpha$.  Then if $\lambda(h_{\alpha})/2\notin\{n+1,\dots,2n\}$, we have that $V_{\Sigma}(\lambda)\cong V_{r_{\alpha}\Sigma}(r_{\alpha}\lambda)$, where the formula for $r_{\alpha}\lambda$ is given in Lemma \ref{lemma noniso noncrit reflection}.
 	
 	However if $\lambda(h_{\alpha})/2=n+k\in\{n+1,\dots,2n\}$, then $V_{\Sigma}(\lambda)$ must contain $V_{\Sigma}(\lambda-2k\alpha)$, implying that it is never simple, and further it is not highest weight with respect to $r_{\alpha}\Sigma$.  In this case we say that $\lambda$ is an $\alpha$-\emph{critical weight}. Reflecting $\alpha$-critical weights to other simple roots systems becomes a tricky business.  Nevertheless, if one performs only one simple reflection in a singular root, we do have control over what happens: see Lemma \ref{lemma refl noniso crit}.  This allows us to compute $P_{\Sigma}^+$ in cases where non-isotropic singular roots appear.
 
	\subsection{Consequences for simple spherical modules} As explained in the previous paragraph, our work has an ad-hoc element to it when reflecting $\alpha$-critical weights. This prevents us from computing spherical weights with respect to arbitrary positive systems in every case.  However, if we are interested in Question C, such issues don't arise, because if $\lambda$ is $\alpha$-critical then necessarily $V_{\Sigma}(\lambda)$ is not simple.  Hence, if $V_{\Sigma}(\lambda)$ is finite-dimensional and simple, we must have $V_{\Sigma}(\lambda)\cong V_{r_{\alpha}\Sigma}(r_{\alpha}\lambda)$ for any $\alpha\in\Sigma$.  
	
	If $\Sigma$ is a base, we call $\lambda\in P_{\Sigma}^+$ \emph{fully reflectable} if for any base $\Sigma'\sub\Delta$, we have $V_{\Sigma}(\lambda)\cong V_{\Sigma'}(\lambda_{\Sigma'})$ for some $\lambda_{\Sigma'}\in P_{\Sigma'}^+$.  
	
	\begin{conj}\label{conj 1}
		Let $\Sigma\sub\Delta$ be a base, and let $\lambda\in P_{\Sigma}^+$ be fully reflectable.  Then $V_{\Sigma}(\lambda)$ is simple if and only if for any base $\Sigma'\sub\Delta$ and any non-isotropic $\beta\in\Sigma'$ with \linebreak $m(\beta):=-\operatorname{sdim}(\g_{\beta})/2-\operatorname{sdim}\g_{2\beta}\in\Z_{\geq0}$, we have
		\[
		\lambda_{\Sigma'}(h_{\beta})/2\notin\{m(\beta)+1,\dots,2m(\beta)\}.
		\]
	\end{conj}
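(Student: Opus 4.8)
The plan is to prove the two implications separately, with the reverse one resting on a reduction to rank one.

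\emph{The forward direction} is essentially a restatement of the critical-weight phenomenon. Suppose $V_\Sigma(\lambda)$ is simple, fix a base $\Sigma'$, and let $\beta\in\Sigma'$ be non-isotropic with $m(\beta)\in\Z_{\geq 0}$; we may assume $m(\beta)\geq 1$, since otherwise $\{m(\beta)+1,\dots,2m(\beta)\}$ is empty. As $\lambda$ is fully reflectable, $V_\Sigma(\lambda)\cong V_{\Sigma'}(\lambda_{\Sigma'})$, so $V_{\Sigma'}(\lambda_{\Sigma'})$ is simple too. If $\lambda_{\Sigma'}(h_\beta)/2=m(\beta)+k$ lay in $\{m(\beta)+1,\dots,2m(\beta)\}$, then $\lambda_{\Sigma'}$ would be $\beta$-critical, and by Lemma~\ref{lemma refl noniso crit} (together with the discussion preceding it) $V_{\Sigma'}(\lambda_{\Sigma'})$ would properly contain $V_{\Sigma'}(\lambda_{\Sigma'}-2k\beta)$, contradicting simplicity. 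Hence the displayed condition is forced.

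\emph{For the reverse direction} I would reduce to rank one. For a base $\Sigma'$ and $\beta\in\Sigma'$, let $(\g^{(\beta)},\k^{(\beta)})$ be the rank-one supersymmetric sub-pair determined by $\beta$, with one-dimensional Cartan subspace $\C h_\beta$, and let $V^{(\beta)}_{\Sigma'}(\lambda_{\Sigma'})$ be its associated spherical module attached to $\lambda_{\Sigma'}(h_\beta)$. The first step is the rank-one computation: for $\beta$ even, $V^{(\beta)}_{\Sigma'}(\lambda_{\Sigma'})$ is simple whenever it is finite dimensional (as in the classical rank-one situation); for $\beta$ isotropic the same holds, since $r_\beta$ is always well behaved on representations; and for $\beta$ non-isotropic with $m(\beta)\geq 1$ — the relevant rank-one superalgebra being of $\o\s\p(1|2m(\beta))$ type — a direct analysis of its finite-dimensional modules, essentially Lemma~\ref{lemma refl noniso crit} in rank one, shows $V^{(\beta)}_{\Sigma'}(\lambda_{\Sigma'})$ is simple precisely when $\lambda_{\Sigma'}(h_\beta)/2\notin\{m(\beta)+1,\dots,2m(\beta)\}$. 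The second step is a local-to-global principle: $V_\Sigma(\lambda)$ is simple if and only if every $V^{(\beta)}_{\Sigma'}(\lambda_{\Sigma'})$ is. One direction is routine: a primitive vector in a proper submodule of some $V^{(\beta)}_{\Sigma'}(\lambda_{\Sigma'})$ induces, via the parabolic generated by $\g^{(\beta)}$ and $\c(\a)$, a primitive vector in $V_{\Sigma'}(\lambda_{\Sigma'})\cong V_\Sigma(\lambda)$. For the converse, given a nonzero proper submodule $N\subsetneq V_\Sigma(\lambda)$, I would range over all bases $\Sigma'$ and all $\n_{\Sigma'}$-primitive weight vectors $v\in N\setminus\{0\}$, and pick one minimizing the $\Sigma'$-height of $\lambda_{\Sigma'}-\operatorname{wt}(v)$; since the hypothesis forbids any $\lambda_{\Sigma'}$ from being critical, every singular reflection gives $V_\Sigma(\lambda)\cong V_{r_\alpha\Sigma}(r_\alpha\lambda)$, and these, together with the even reflections (available because $V_\Sigma(\lambda)$ is integrable), should force $\lambda_{\Sigma'}-\operatorname{wt}(v)$ to be a nonnegative multiple of a single simple root $\beta\in\Sigma'$, so that the $\g^{(\beta)}$-submodule generated by $v$ yields a proper submodule of $V^{(\beta)}_{\Sigma'}(\lambda_{\Sigma'})$. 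Combining the two steps, and noting that the even and isotropic rank-one conditions are automatic, leaves exactly the stated family of non-isotropic conditions.

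\emph{The main obstacle} is the converse half of the local-to-global principle. A proper submodule may carry primitive vectors only at weights that are "deep" relative to $\lambda$, and to recognise such a vector inside a rank-one reduction one must pass to a base where its weight becomes a single simple-root string — an operation that is transparent only away from critical weights. The no-critical hypothesis is tailored to eliminate precisely these bad reflections, but one must still prove a connectivity-and-minimality statement: that the bases reachable from $\Sigma$ by singular and even reflections suffice, and that along such a path the $\Sigma'$-height of $\lambda_{\Sigma'}-\operatorname{wt}(v)$ can be driven down to single-root support, uniformly over all supersymmetric pairs. This combinatorial navigation of the space of bases, rather than any input about module structure, is what I expect to be the difficult point, and is presumably why the statement is posed as a conjecture.
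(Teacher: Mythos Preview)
This statement is a \emph{conjecture}: the paper does not prove it, and explicitly says so (``We do not prove the necessity of these conditions in the article'', and the full statement is known only under a genericity hypothesis). So there is no proof in the paper to compare against; the question is whether your sketch closes the gap. It does not.

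Your forward direction has a genuine hole. You invoke Lemma~\ref{lemma noniso noncrit reflection}(3) (the critical-weight containment) to conclude that $V_{\Sigma'}(\lambda_{\Sigma'})$ is not simple when $\lambda_{\Sigma'}(h_\beta)/2\in\{m(\beta)+1,\dots,2m(\beta)\}$. But that lemma is stated and proved only for \emph{singular} non-isotropic simple roots, i.e.\ those with $m_\beta=(0|2n)$, where indeed $m(\beta)=n$. The conjecture, however, quantifies over \emph{all} non-isotropic $\beta\in\Sigma'$ with $m(\beta)\in\Z_{\geq 0}$, and this includes \emph{regular} roots. For instance, in the rank-one pair $(\o\s\p(m|2n),\o\s\p(m-1|2n))$ with $m$ even and $m\leq 2n+2$, the unique simple root is regular (case~(iii) of Lemma~\ref{lemma classification rank one}) and has $m(\beta)=(2n-m+2)/2>0$; the critical-weight lemma says nothing here, and the paper explicitly points to a separate computation in \cite{Sh2} for precisely this case. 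The same issue arises for the regular rank-one pairs in cases~(iv) and~(v). So the forward direction is not a ``restatement of the critical-weight phenomenon'' --- for regular $\beta$ one needs a different argument showing that two distinct dominant weights share a Casimir eigenvalue, which is what the paper alludes to but does not carry out.

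For the reverse direction you correctly identify the obstacle (the local-to-global step), and you are right that this is why the statement is a conjecture. Two smaller points: the rank-one pair attached to a singular non-isotropic $\beta$ is $(\o\s\p(2|2n),\o\s\p(1|2n))$, not ``of $\o\s\p(1|2m(\beta))$ type''; and your height-minimization argument would also need to handle the regular rank-one pairs, where simplicity of $V^{(\beta)}_{\Sigma'}(\lambda_{\Sigma'})$ is again governed by the numerical condition in the conjecture rather than by anything proved in this paper.
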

	
	The numerical conditions in the above conjecture arise from considering the rank one cases, and studying when two dominant weights will have the same eigenvalue for the Casimir.  We do not prove the necessity of these conditions in the article.  The proof for the case of $(\o\s\p(m|2n),\o\s\p(m-1|2n))$ is written in \cite{Sh2}.  We note that Conjecture \ref{conj 1} was shown to hold under an extra genericity hypothesis on $\lambda$ in Sec.~6.4 of \cite{Sh3}.
	
%
%
	\subsection{Supersymmetric spaces}  Our work is part of an ongoing project to improve understanding of supersymmetric spaces and their connections to representation theory.  Supersymmetric spaces are homogeneous superspaces of the form $\mathcal{G}/\KK$, where $\KK$ is a symmetric subgroup of the supergroup $\mathcal{G}$.  Such spaces are natural in the study of super harmonic analysis, see for instance \cite{C} and \cite{LS}.  They also have important connections to interpolation polynomials (see \cite{SSS}, \cite{SSS2}, and \cite{SaSo}), integrable systems (see \cite{SeV}), and physics (see \cite{Z} and \cite{SRFL}).  	
	
	\subsection{Outlook for the queer Kac-Moody setting}  We expect our techniques to generalize to the queer Kac-Moody setting (see \cite{ShSi}), which includes the supersymmetric pairs $(\q(n),\q(r)\times\q(n-r))$.  This will be the subject of future work.
	
	\subsection{Outline}  In Section 2 we develop the necessary facts about restricted root systems we will use, including about singular reflections.  Section 3 studies the modules $V_{\Sigma}(\lambda)$, how they behave under singular reflections, and the tools for checking integrability.  Section 4 explicitly describes the sets $P_{\Sigma}^+$ for each supersymmetric pair and a choice of base $\Sigma$.
	
	\subsection{Acknowledgements}  We thank anonymous referees for their helpful comments and suggestions. Further, we thank Vera Serganova for providing feedback on an initial version of this article, and for many stimulating discussions.  We also thank Shifra Reif, Siddhartha Sahi, and Hadi Salmasian for many helpful discussions about supersymmetric pairs and restricted root systems. This project was partially made possible by a SQuaRE at the American Institute for Mathematics, and we thank AIM for providing a supportive and mathematically rich environment. The author was partially supported by ARC grant DP210100251 and by a Simons-AMS Travel Grant.

	\section{Restricted root systems} 
	
	 In what follows, for a super vector space $V$ we write $V=V_{\ol{0}}\oplus V_{\ol{1}}$ for its parity decomposition.  We always work over an algebraically closed field $\Bbbk$ of characteristic 0. 
	
	\subsection{Supersymmetric pairs}
	Let $\g$ be a finite-dimensional, symmetrizable Kac-Moody Lie superalgebra (see \cite{S}). Let $\theta$ be an involution of $\g$ which preserves a nondegenerate, invariant bilinear form $(-,-)$ on $\g$.  Write the eigenspace decomposition for $\theta$ as $\g=\k\oplus\p$, where $\k=\g^\theta$.  Then $\k$ is naturally a subalgebra, and $\p$ is a $\k$-module. We call $(\g,\k)$ a supersymmetric pair.
	
	Set $\a\sub\p_{\ol{0}}$ to be a Cartan subspace, meaning a maximal abelian subspace. It is known that $\a$ is unique to conjugacy by the action of $\exp(\k_{\ol{0}})\sub GL(\p_{\ol{0}})$ (see Sec.~26 of \cite{T}).  Set $\m=\k\cap\c(\a)$ to be the centralizer of $\a$ in $\k$.
	
	\
	
	\textbf{Assumption ($\star$):} We assume that $\c(\a)=\a\oplus\m$.  Equivalently, $(\g,\k)$ admits an Iwasawa decomposition (see next paragraph).
	
	\
	
	By the classical picture (see Sec.~26 of \cite{T}), we have $\c(\a)_{\ol{0}}=\a\oplus\m_{\ol{0}}$.  Thus our assumption is equivalent to $\c(\a)_{\ol{1}}=\m_{\ol{1}}$, i.e.~$\c(\a)_{\ol{1}}\sub\k$.  By \cite{Sh}, if $\g$ is indecomposable then either $\c(\a)_{\ol{1}}\sub\k$ or $\c(\a)_{\ol{1}}\sub\p$, and $(\g,\k)$ admits an Iwasawa decomposition (see Lemma \ref{lemma iwasawa}) if and only if $\c(\a)_{\ol{1}}\sub\k$.
	
	If $\h$ is a Cartan subalgebra of $\g$ containing $\a$, then by Cor.~26.13 of \cite{T}, $\h$ is $\theta$-stable.  We fix a choice of such a Cartan subalgebra $\h$ throughout.
	
	\subsection{Restricted root system} We may consider the action of $\a$ on $\g$ by the adjoint action.  Writing $\Delta\sub\a^*\setminus\{0\}$ for the non-zero weights of this action, we have
	\[
	\g=\c(\a)\oplus\bigoplus\limits_{\alpha\in\Delta}\g_{\alpha}=\m\oplus\a\oplus\bigoplus\limits_{\alpha\in\Delta}\g_{\alpha},
	\]
	where we have used out assumption ($\star$) for the last equality.  We refer to elements of $\Delta$ as restricted roots, or just roots when the context is clear.
	
	\begin{remark}
		Let $\widetilde{\Delta}\sub\h^*$ denote the root system of $\g$.  Then another description of $\Delta$ may be given as
		\[
		\Delta=\{\alpha|_{\a}:\alpha\in\widetilde{\Delta}\}\setminus\{0\}.
		\]
	\end{remark}

	The following definition is from Sec.~5 of \cite{Sh}; we write $\Z\Delta$ for the abelian subgroup of $\a^*$ generated by $\Delta$.
	\begin{definition}
		Let $\phi:\Z\Delta\to\R$ be a group homomorphism such that $\phi(\alpha)\neq0$ for all $\alpha\in\Delta$.  Then we write $\Delta^+:=\{\alpha\in\Delta|\phi(\alpha)>0\}$, and call $\Delta^+\sub\Delta$ a choice of positive system of $\Delta\sub\a^*$.
	\end{definition}

	\begin{definition}
		A base $\Sigma\sub\Delta$ is a linearly independent set in $\a^*$ such that 
		\[
		\Delta\sub\N\Sigma\sqcup(-\N\Sigma).
		\]
		In other words, every $\alpha\in\Delta$ is either a non-negative or non-positive integral linear combination of elements of $\Sigma$.  We call elements of a base \emph{simple (restricted) roots}.  
	\end{definition}

	\begin{definition}
		The rank of a supersymmetric pair $(\g,\k)$ is the size of any base $\Sigma\sub\Delta$ of the restricted root system.
	\end{definition}

	Given a base $\Sigma\sub\Delta$, set $\Delta^+_{\Sigma}:=\N\Sigma$. 
	
	\begin{lemma}
			The set $\Delta_{\Sigma}^+$ is a positive system.  Further, the correspondence $\Sigma\mapsto\Delta_{\Sigma}^+$ is bijective.
	\end{lemma}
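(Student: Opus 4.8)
The plan is to establish the three assertions in turn: that $\Delta_\Sigma^+$ is a positive system, that the map $\Sigma\mapsto\Delta_\Sigma^+$ from bases to positive systems is injective, and that it is surjective. For the first assertion, note that $\Sigma\sub\Delta$ and $\Delta\sub\N\Sigma\sqcup(-\N\Sigma)\sub\Z\Sigma$ give $\Z\Delta=\Z\Sigma$, which is free with basis $\Sigma$ since $\Sigma$ is $\R$-linearly independent. I would then define $\phi\colon\Z\Delta=\Z\Sigma\to\R$ by $\phi(\alpha)=1$ for every $\alpha\in\Sigma$, extended additively, so that $\phi\bigl(\sum_{\alpha\in\Sigma}c_\alpha\alpha\bigr)=\sum_\alpha c_\alpha$. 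For $\beta\in\Delta$ the coefficients $c_\alpha$ are all in $\N$ or all in $-\N$ and not all zero (as $\beta\neq0$), so $\phi(\beta)$ is a nonzero integer and $\phi(\beta)>0$ precisely when $\beta\in\N\Sigma$, i.e.\ precisely when $\beta\in\Delta_\Sigma^+$. Thus $\phi$ witnesses that $\Delta_\Sigma^+$ is a positive system.

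For injectivity, suppose $\Sigma,\Sigma'$ are bases with $\Delta_\Sigma^+=\Delta_{\Sigma'}^+$. Then $\Sigma\sub\Delta_{\Sigma'}^+\sub\N\Sigma'$ and, symmetrically, $\Sigma'\sub\N\Sigma$. Fix $\alpha\in\Sigma$ and write $\alpha=\sum_{\beta\in\Sigma'}n_\beta\beta$ and $\beta=\sum_{\gamma\in\Sigma}m_{\beta\gamma}\gamma$ with all $n_\beta,m_{\beta\gamma}\in\N$. Substituting and comparing coefficients in the basis $\Sigma$ yields $\sum_\beta n_\beta m_{\beta\gamma}=1$ if $\gamma=\alpha$ and $=0$ otherwise. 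The equation for $\gamma=\alpha$ is a sum of non-negative integers equal to $1$, so there is a unique $\beta_0\in\Sigma'$ with $n_{\beta_0}=m_{\beta_0\alpha}=1$; the equations for $\gamma\neq\alpha$ then force $m_{\beta_0\gamma}=0$, whence $\beta_0=\alpha$ and $\alpha\in\Sigma'$. Thus $\Sigma\sub\Sigma'$, and by symmetry $\Sigma=\Sigma'$.

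For surjectivity, let $\Delta^+=\{\alpha:\phi(\alpha)>0\}$ be a positive system. First $\Delta=\Delta^+\sqcup(-\Delta^+)$, since $\Delta=-\Delta$ by nondegeneracy of the invariant form (it pairs $\g_\alpha$ with $\g_{-\alpha}$) and no root has $\phi$-value $0$. Let $\Sigma\sub\Delta^+$ be the set of $\alpha\in\Delta^+$ that cannot be written as $\beta+\gamma$ with $\beta,\gamma\in\Delta^+$. Since $\Delta$ is finite, $\phi(\Delta^+)$ is a finite set of positive reals, so induction on $\phi(\alpha)$ shows every element of $\Delta^+$ lies in $\N\Sigma$; hence $\Delta\sub\N\Sigma\sqcup(-\N\Sigma)$ and $\Sigma$ spans $\R\Delta$. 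Granting that $\Sigma$ is linearly independent, $\Sigma$ is a base and $\Delta_\Sigma^+=\Delta\cap\N\Sigma=\Delta^+$, completing the proof; and combined with injectivity the correspondence is then bijective. The linear independence is where I would do the real work: for distinct $\alpha,\beta\in\Sigma$ one has $\alpha-\beta\notin\Delta$ (otherwise $\alpha$ or $\beta$ would be decomposable), and one wants to deduce a sign constraint on $(\alpha,\beta)$ that forbids a nontrivial relation $\sum_{\alpha\in S}c_\alpha\alpha=\sum_{\beta\in T}d_\beta\beta$ with $S,T\sub\Sigma$ disjoint and coefficients positive, by evaluating the invariant form on the common value.

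The main obstacle is precisely this last point. In the super setting the invariant form on $\a^*$ need not be definite and, for isotropic (or otherwise singular) roots, the inequality $(\alpha,\beta)\leq 0$ for distinct elements of $\Sigma$ can genuinely fail, so the classical positivity argument does not transfer verbatim. I expect to circumvent it using the finer structure of restricted root systems of supersymmetric pairs: separate out the even part $\Delta_0$, which is an honest root system on which the classical argument applies to the principal roots $\Pi$, and control the remaining roots via the classification of rank one pairs and the properties of the Iwasawa decomposition established earlier; alternatively, this statement may be quoted directly from Sec.~5 of \cite{Sh}.
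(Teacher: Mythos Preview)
Your treatment of the first two assertions matches the paper's: the same $\phi$ witnessing that $\Delta_\Sigma^+$ is a positive system, and your injectivity argument is a fully spelled-out version of what the paper dismisses with ``follows from the definition of a base.''

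For surjectivity, however, the paper does not attempt the direct, classical-style construction you outline (take the indecomposable positive roots and prove they form a base). Instead it lifts the positive system $\Delta^+\sub\a^*$ to a positive system for the full root system of $\g$ in $\h^*$, takes the corresponding simple roots $\widetilde{\Sigma}\sub\h^*$, and projects back to $\a^*$; that the projection is a base is then quoted from Prop.~5.7 and Lem.~5.10 of \cite{Sh}. In other words, the paper's entire argument for surjectivity is precisely the ``alternative'' you mention in your last sentence.

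You are right that the direct route runs into a genuine obstruction: the standard proof of linear independence for the indecomposable positive roots uses $(\alpha,\beta)\leq 0$ for distinct $\alpha,\beta\in\Sigma$, and this can fail in the restricted super setting because the form on $\a^*$ is indefinite and singular roots are present. The paper sidesteps this entirely by working upstairs in $\h^*$, where the theory of bases for Kac--Moody superalgebras is already established, and letting the projection do the work. Your suggestion to separate out $\Delta_0$ and control the rest via rank-one structure might in principle be made to work, but it would be substantially more effort than the lifting argument, and the paper does not pursue it.
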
  
	\begin{proof}
		For $\Sigma\sub\Delta$ a base, define $\phi_{\Sigma}:\Z\Delta\to\R$ by $\phi(\alpha)=1$ for $\alpha\in\Sigma$, and extend linearly.  Then it is clear that for $\phi_{\Sigma}$ the corresponding positive system is $\Delta^+_{\Sigma}$.  The injectivity of the correspondence $\Sigma\mapsto\Delta^+_{\Sigma}$ follows from the definition of a base.
		
		For surjectivity, Sec.~5 of \cite{Sh} explains that we may extend any positive system $\Delta^+\sub\Delta$ to a choice of positive system for all $\g$ with simple roots $\widetilde{\Sigma}\sub\h^*$.  Let us call $\Sigma$ the projection of $\widetilde{\Sigma}$ to $\a^*$. Then $\Sigma$ will be a base by Prop.~5.7 and Lem.~5.10 of \cite{Sh}, and it is clear that $\Delta^+=\Delta_{\Sigma}^+$.
	\end{proof}
	
	\subsection{Nilpotent subalgebra $\n_{\Sigma}$ and Iwasawa decomposition} Given a base $\Sigma$, we obtain a nilpotent subalgebra $\n_{\Sigma}$, which by definition is 
	\[
	\n_{\Sigma}^+=\bigoplus\limits_{\alpha\in\Delta_{\Sigma}^+}\g_{\alpha}.
	\]
	
	\begin{lemma}\label{lemma iwasawa}
		For any base $\Sigma$ we have the Iwasawa decomposition $\g=\k\oplus\a\oplus\n_{\Sigma}^+$.
	\end{lemma}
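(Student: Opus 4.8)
The plan is to deduce the superalgebra Iwasawa decomposition from the classical (even) one together with Assumption $(\star)$, handling the even and odd parts separately and then assembling. First I would recall from Sec.~26 of \cite{T} that the classical Iwasawa decomposition $\g_{\ol 0}=\k_{\ol 0}\oplus\a\oplus(\n_\Sigma^+)_{\ol 0}$ holds: here $(\n_\Sigma^+)_{\ol 0}$ is the sum of the even restricted root spaces for positive restricted roots, and the decomposition follows because $\g_{\ol 0}$ with the restriction of $\theta$ is a classical symmetric pair, with $\a$ a maximal abelian subspace of $\p_{\ol 0}$ and $\c(\a)_{\ol 0}=\a\oplus\m_{\ol 0}$. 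For the odd part, the key input is Assumption $(\star)$ in the form $\c(\a)_{\ol 1}=\m_{\ol 1}\sub\k$: combined with the restricted root space decomposition $\g=\m\oplus\a\oplus\bigoplus_{\alpha\in\Delta}\g_\alpha$, this gives $\g_{\ol 1}=\m_{\ol 1}\oplus\bigoplus_{\alpha\in\Delta}(\g_\alpha)_{\ol 1}$, with $\m_{\ol 1}\sub\k_{\ol 1}$.

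Next I would show that for each restricted root $\alpha$, the sum $\g_{-\alpha}\oplus\g_\alpha$ decomposes as a $\k$-contribution plus $\g_\alpha$. Concretely: the map $x\mapsto x+\theta(x)$ sends $\g_{-\alpha}$ into $\k$, and I claim $\k\cap(\g_{-\alpha}\oplus\g_\alpha\oplus\a\oplus\m) = \m \oplus \{x+\theta(x):x\in\g_{-\alpha}\}$ (using that $\theta$ sends $\g_\beta$ to $\g_{-\beta}$ since $\theta$ fixes $\a$ pointwise — because $\a\sub\k$ — so $\theta$ negates restricted roots). Since $\theta$ is injective and $\g_{-\alpha}\cap\k$-type overlaps are controlled by weights, one gets $\dim(\g_{-\alpha}\oplus\g_\alpha) = \dim\g_\alpha + \dim\{x+\theta(x):x\in\g_{-\alpha}\} = \dim\g_\alpha + \dim\g_{-\alpha}$, consistent with $\dim\g_\alpha=\dim\g_{-\alpha}$ (as $\theta$ is an isomorphism $\g_\alpha\xrightarrow{\sim}\g_{-\alpha}$). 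Summing over a positive system $\Delta_\Sigma^+$, together with the $\m$ and $\a$ pieces, yields $\g = \k + \a + \n_\Sigma^+$ as a sum. One then checks the sum is direct by a dimension count: $\dim\k + \dim\a + \dim\n_\Sigma^+ = \dim\g$, which follows since $\dim\a + \dim\m = \dim\c(\a)$, $\dim\n_\Sigma^+ = \tfrac12(\dim\g - \dim\c(\a))$ (positive and negative restricted root spaces are swapped by $\theta$, hence equidimensional), and $\dim\k = \dim\m + \tfrac12(\dim\g-\dim\c(\a))$ by the same pairing $x\mapsto x+\theta(x)$ applied across all roots.

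The main obstacle — and the place to be careful — is the parity bookkeeping in the odd sector: one must make sure that the pairing argument $x\mapsto x+\theta(x)$ on $\g_{-\alpha}$ does not accidentally land in $\a\oplus\n_\Sigma^+$, i.e.\ that $\k\cap(\a\oplus\n_\Sigma^+)=0$. In the even part this is classical; in the odd part it is exactly Assumption $(\star)$ that rescues us, since $\k_{\ol 1}\cap\a=0$ trivially and any element of $\k\cap\n_\Sigma^+$ would be $\theta$-fixed yet supported on strictly positive restricted roots, forcing it into $\c(\a)\cap\n_\Sigma^+=0$ after applying $\theta$ (which would move it to strictly negative roots). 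So the intersection vanishes and the sum is direct; combined with the dimension count, or more cleanly with directness plus the spanning statement, this completes the proof. I would phrase the final write-up to lean on directness + spanning rather than a dimension count, to avoid any subtlety with $\dim$ meaning superdimension versus total dimension.
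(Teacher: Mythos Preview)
Your overall strategy is correct and is precisely the ``standard argument'' the paper alludes to (the paper's own proof simply invokes Assumption~($\star$) and cites Thm.~5.3 of \cite{Sh}). However, there is a slip in your reasoning that you should fix: you write that $\theta$ fixes $\a$ pointwise ``because $\a\sub\k$'', but in fact $\a\sub\p_{\ol 0}$ by definition (it is a Cartan subspace of $\p$, not of $\k$), so $\theta$ acts by $-1$ on $\a$. Note also that your stated reasoning is internally inconsistent: if $\theta$ really fixed $\a$ pointwise, it would \emph{preserve} each restricted root space rather than negate the root. The conclusion you actually use---that $\theta$ sends $\g_\alpha$ to $\g_{-\alpha}$---is correct, but for the opposite reason: for $x\in\g_\alpha$ and $a\in\a$ one has $[a,\theta(x)]=\theta([\theta(a),x])=\theta([-a,x])=-\alpha(a)\theta(x)$.

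Once this is corrected, your spanning argument (via $x\mapsto x+\theta(x)$ on negative root spaces, together with $\c(\a)=\m\oplus\a$) and your directness argument (an element of $\k\cap(\a\oplus\n_\Sigma^+)$ is $\theta$-fixed yet $\theta$ sends its $\n_\Sigma^+$-part to $\n_\Sigma^-$ and its $\a$-part to $-\a$, forcing both to vanish) go through without change.
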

	\begin{proof}
		This follows from our assumption that $\c(\a)_{\ol{1}}\sub\k$ and a standard argument (see Thm.~5.3 of \cite{Sh}).
	\end{proof}

	\begin{lemma}\label{lemma n gend by}
	 	The subalgebra $\n_{\Sigma}^+$ is generated by the subspaces $\g_{\alpha}$ for $\alpha\in\Sigma$.
	\end{lemma}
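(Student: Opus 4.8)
The plan is to show that the subalgebra $\n'\sub\n_{\Sigma}^+$ generated by the $\g_\alpha$ with $\alpha\in\Sigma$ is in fact all of $\n_{\Sigma}^+$, by inducting on the height of a root with respect to $\Sigma$. Recall that since $\Sigma$ is a base, every $\beta\in\Delta_{\Sigma}^+$ can be written $\beta=\sum_{\alpha\in\Sigma}c_\alpha\alpha$ with $c_\alpha\in\N$, and we call $\operatorname{ht}(\beta)=\sum_\alpha c_\alpha$ its height. It suffices to prove that $\g_\beta\sub\n'$ for every $\beta\in\Delta_{\Sigma}^+$, by induction on $\operatorname{ht}(\beta)$; the base case $\operatorname{ht}(\beta)=1$ is immediate since then $\beta\in\Sigma$.

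For the inductive step, fix $\beta\in\Delta_{\Sigma}^+$ with $\operatorname{ht}(\beta)\geq 2$ and a nonzero $x\in\g_\beta$; I want to express $x$ as a sum of brackets $[y,z]$ with $y\in\g_\alpha$ for some $\alpha\in\Sigma$ and $z\in\g_{\beta-\alpha}$, the latter lying in $\n'$ by the induction hypothesis (note $\beta-\alpha\in\Delta_{\Sigma}^+$ whenever it is a root, since its coefficients are still in $\N$). The key input is the invariant form $(-,-)$ on $\g$, which pairs $\g_\gamma$ nondegenerately with $\g_{-\gamma}$ (this is standard for the restricted root decomposition, using that $\a$ acts diagonalizably and the form is $\a$-invariant and nondegenerate). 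So it is enough to show that $\sum_{\alpha\in\Sigma}[\g_\alpha,\g_{\beta-\alpha}]=\g_\beta$; equivalently, that the orthogonal complement of this sum inside $\g_\beta$ is zero. Suppose $x\in\g_\beta$ is orthogonal to every $[\g_\alpha,\g_{\beta-\alpha}]$. By invariance of the form, $([\g_\alpha,\g_{\beta-\alpha}],x)=(\g_{\beta-\alpha},[\g_{-\alpha},x])$, so the hypothesis says $[\g_{-\alpha},x]=0$ for all $\alpha\in\Sigma$ (using nondegeneracy of the pairing between $\g_{\beta-\alpha}$ and $\g_{\alpha-\beta}$, after noting $[\g_{-\alpha},x]\sub\g_{\beta-\alpha}$). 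Thus $x$ is a lowest weight vector for the $\a$-action killed by all negative simple restricted root spaces $\g_{-\alpha}$, $\alpha\in\Sigma$.

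To finish I would derive a contradiction from $\operatorname{ht}(\beta)\geq 2$. Consider the subalgebra generated by $x$ together with all $\g_{-\alpha}$, $\alpha\in\Sigma$; the $\a$-weights appearing are $\beta$ plus non-positive integral combinations of $\Sigma$, so since $[\g_{-\alpha},x]=0$ and $\a$ is abelian, $\Bbbk x$ together with $\m\oplus\a\oplus\n_{\Sigma}^-$ spans a subalgebra $\s$ on which $\operatorname{ad}(x)$ acts locally nilpotently — more precisely, $[\m\oplus\a,x]\sub\Bbbk x$ and $[\n_\Sigma^-,x]=0$, so $\Bbbk x$ is an ideal complementing $\m\oplus\a\oplus\n_\Sigma^-$. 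But then the ideal generated by $x$ in $\g$ meets $\c(\a)\oplus\n_\Sigma^-$ trivially away from weight $\beta$, contradicting the fact (following from the Iwasawa decomposition of Lemma \ref{lemma iwasawa} together with $\c(\a)=\a\oplus\m$, and simplicity/indecomposability hypotheses on $\g$) that $\g$ has no proper nonzero ideals supported only in a single positive weight. The cleanest way to package this, which I expect to be the main obstacle, is to invoke the analogous statement for the full root system $\widetilde\Delta$ of $\g$ with respect to a Borel $\widetilde\Sigma$ projecting to $\Sigma$ (as in the proof of the previous lemma): there $\widetilde\n^+$ is generated by the simple root vectors because $\g$ is Kac-Moody, and each $\g_\beta$ for $\beta\in\Delta_\Sigma^+$ is a sum of images of $\widetilde\g_{\widetilde\beta}$ for $\widetilde\beta$ projecting to $\beta$, with heights matching up; pushing the generation statement for $\widetilde\n^+$ down along the projection $\h^*\to\a^*$ then yields the claim directly, and this avoids re-proving nondegeneracy and no-small-ideal statements by hand.
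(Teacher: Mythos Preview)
Your direct approach via the invariant form has a genuine error: you take $x\in\g_\beta$ orthogonal to $\sum_{\alpha}[\g_\alpha,\g_{\beta-\alpha}]\sub\g_\beta$, but the form pairs $\g_\beta$ nondegenerately with $\g_{-\beta}$ and is identically zero on $\g_\beta\times\g_\beta$, so this orthogonality is vacuous. The computation $([\g_\alpha,\g_{\beta-\alpha}],x)=(\g_{\beta-\alpha},[\g_{-\alpha},x])$ is also not what invariance gives. What works is to take $x\in\g_{-\beta}$; invariance then yields $[\g_\alpha,x]=0$ for all $\alpha\in\Sigma$, i.e.\ $x$ is an $\n_\Sigma^+$-highest weight vector inside $\n_\Sigma^-$. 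Your subsequent ``ideal supported in a single weight'' contradiction is not justified as written, and in any case would have to appeal to exactly the Kac--Moody fact (no nonzero ideals in $\n$) that makes the lifting argument go through directly.

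Your final paragraph is indeed the paper's approach, but there is a gap in how you propose to execute it. The claim that heights ``match up'' under the projection $\h^*\to\a^*$ is false: some simple roots $\widetilde\alpha\in\widetilde\Sigma$ are $\theta$-fixed, hence restrict to $0$ on $\a$, so they contribute to the height in $\widetilde\Delta$ but not in $\Delta$; correspondingly their root spaces $\g_{\widetilde\alpha}$ lie in $\m$, not in any $\g_\alpha$ with $\alpha\in\Sigma$. So ``pushing down'' the generation of $\widetilde\n^+$ does not immediately give generation of $\n_\Sigma^+$ by the $\g_\alpha$, $\alpha\in\Sigma$. The paper's fix is the observation that the subalgebra $\n'$ generated by the $\g_\alpha$, $\alpha\in\Sigma$, is $\c(\a)$-stable (since $[\m,\g_\alpha]\sub\g_\alpha$). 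Hence $\c(\a)+\n'$ is a subalgebra containing every simple root space of $\widetilde\Sigma$ (those in $\m$ and those in some $\g_\alpha$), therefore contains $\widetilde\n^+\supseteq\n_\Sigma^+$; intersecting with the triangular decomposition $\g=\n_\Sigma^-\oplus\c(\a)\oplus\n_\Sigma^+$ forces $\n'=\n_\Sigma^+$.
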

	\begin{proof}
		Extending $\a$ to a Cartan subalgebra $\h$ of $\g$, by Sec.~5 of \cite{Sh} we may lift $\Sigma$ to a base $\widetilde{\Sigma}\sub\h^*$ such that every element $\alpha\in\widetilde{\Sigma}$ is either fixed by $\theta$, and thus $\g_{\alpha}\sub\m$, or $\theta\alpha\neq\alpha$ in which case $\alpha|_{\a}\in\Sigma$.
		
		It is clear that the subalgebra generated by $\g_{\alpha}$ for $\alpha\in\Sigma$ is $\c(\a)=\m\oplus\a$-stable.  From the decomposition
		\[
		\g=\n_{\Sigma}^-\oplus\c(\a)\oplus\n_{\Sigma}^+,
		\]
		 the result is now clear.
	\end{proof}

	\begin{remark}\label{remark presentation}
		We have shown that for a base $\Sigma=\{\alpha_1,\dots,\alpha_k\}\sub\Delta$, we obtain a presentation of $\g$ such that it is generated by $\c(\a)=\a\oplus\m$, $\g_{\alpha_1},\dots,\g_{\alpha_k},\g_{-\alpha_1},\dots,\g_{-\alpha_k},$ subject to the relations that each $\g_{\alpha_i}$ is a $\c(\a)$-module, and 
		\[
		[\g_{\alpha_i},\g_{-\alpha_j}]=0 \ \text{ for } \ i\neq j.
		\]
		Of course there are more relations, but we don't concern ourselves with this here.  We only mention that $\g$ does not contain any ideals which do not intersect $\h$ because it is Kac-Moody; in particular it does not have any ideals contained in $\n_{\Sigma}^+$.  
	\end{remark}
	
	\begin{cor}
		Let $\alpha\in\Sigma$.  Then $[\g_{\alpha},\g_{\alpha}]=\g_{2\alpha}$.
	\end{cor}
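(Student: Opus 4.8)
The plan is to establish the two inclusions $[\g_\alpha,\g_\alpha]\subseteq\g_{2\alpha}$ and $\g_{2\alpha}\subseteq[\g_\alpha,\g_\alpha]$ separately; the first is a formality, and the second will follow at once from Lemma \ref{lemma n gend by} combined with the linear independence of the base $\Sigma$.

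For the inclusion $[\g_\alpha,\g_\alpha]\subseteq\g_{2\alpha}$, I would simply observe that for $x,y\in\g_\alpha$ the bracket $[x,y]$ lies in the $\a$-weight space of $\g$ of weight $2\alpha$ (the adjoint action of $\a$ being by derivations); since $2\alpha\neq0$, this weight space is contained in $\bigoplus_{\beta\in\Delta}\g_\beta$ and hence equals $\g_{2\alpha}$ (which is $0$ if $2\alpha\notin\Delta$), so $[x,y]\in\g_{2\alpha}$.

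For the reverse inclusion I would argue as follows. We may assume $\g_{2\alpha}\neq0$, so that $2\alpha\in\Delta_\Sigma^+=\N\Sigma$. By Lemma \ref{lemma n gend by}, $\n_\Sigma^+$ is generated as a Lie superalgebra by the root spaces $\g_\beta$ with $\beta\in\Sigma$; consequently each weight space $\g_\mu$ with $\mu\in\Delta_\Sigma^+$ is spanned by iterated brackets of homogeneous elements of these $\g_\beta$'s whose $\a$-weights add up to $\mu$. Applying this with $\mu=2\alpha$: since $\Sigma$ is linearly independent, the only way to write $2\alpha$ as a sum, with repetition, of elements of $\Sigma$ is $\alpha+\alpha$. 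Hence every such iterated bracket has exactly two factors, both drawn from $\g_\alpha$, and therefore has the form $[x,y]$ with $x,y\in\g_\alpha$. This yields $\g_{2\alpha}\subseteq[\g_\alpha,\g_\alpha]$ and finishes the proof.

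The only place where any attention is needed is the bookkeeping of iterated brackets in the $\Z/2$-graded setting — in particular one should note that the self-brackets $[x,x]$ of odd elements $x\in\g_\alpha$ are already contained in $[\g_\alpha,\g_\alpha]$ by definition — but this is not a genuine obstacle, and the whole argument is short once Lemma \ref{lemma n gend by} is in hand.
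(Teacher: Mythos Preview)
Your proof is correct and is exactly the unpacking of the paper's one-line proof, which simply reads ``This follows from Lemma \ref{lemma n gend by}.'' You have made explicit the two ingredients that justify this: the easy inclusion from weight considerations, and the reverse inclusion from the fact that $\n_\Sigma^+$ is generated by the $\g_\beta$ for $\beta\in\Sigma$ together with the linear independence of $\Sigma$, which forces any iterated bracket landing in weight $2\alpha$ to involve only two factors from $\g_\alpha$.
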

	\begin{proof}
		This follows from Lemma \ref{lemma n gend by}.
	\end{proof}
	
	\begin{cor}\label{cor base sufficient}
		A set $\Sigma\sub\Delta$ is a base if and only if the following conditions hold:
		\begin{enumerate}
			\item $\Sigma$ is linearly independent;
			\item $[\g_{\alpha},\g_{-\beta}]=0$ for distinct $\alpha,\beta\in\Sigma$;
			\item $\g$ is generated by $\c(\a)$ along with the root spaces $\g_{\alpha},\g_{-\alpha}$ where $\alpha$ runs over the elements of $\Sigma$.  
		\end{enumerate}
	\end{cor}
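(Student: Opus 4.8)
The plan is to prove the two directions separately, with the forward direction being essentially a recap of what has already been established. If $\Sigma$ is a base, then (1) holds by definition, (2) is part of Remark \ref{remark presentation} (namely $[\g_{\alpha_i},\g_{-\alpha_j}]=0$ for $i\neq j$), and (3) is exactly the content of Lemma \ref{lemma n gend by} together with the decomposition $\g=\n_\Sigma^-\oplus\c(\a)\oplus\n_\Sigma^+$: the subalgebra generated by $\c(\a)$ and the $\g_{\pm\alpha}$, $\alpha\in\Sigma$, contains $\n_\Sigma^+$, contains $\n_\Sigma^-$ by the symmetric argument, and contains $\c(\a)$, hence is all of $\g$. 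So the forward implication requires only assembling these citations.

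For the converse, suppose $\Sigma=\{\alpha_1,\dots,\alpha_k\}\subseteq\Delta$ satisfies (1)--(3). First I would show that every root is an integral combination of the $\alpha_i$ with coefficients all of one sign. Let $\Gamma\subseteq\Z\Delta$ be the subgroup generated by $\Sigma$, and let $\g'$ be the subalgebra generated by $\c(\a)$ and the root spaces $\g_{\pm\alpha_i}$; by (3) we have $\g'=\g$. Now I would argue by induction on word length in the generators: any element of $\g$ obtained as an iterated bracket of elements of $\c(\a)$ and the $\g_{\pm\alpha_i}$ lies in a sum of $\a$-weight spaces whose weights lie in $\Gamma$; hence $\Delta\subseteq\Gamma$, so each root $\beta$ is $\sum n_i\alpha_i$ with $n_i\in\Z$. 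The remaining point is the sign condition: I need that the $n_i$ cannot be of mixed sign. Here I would use condition (2). Fix a positive system $\Delta^+$ (via some $\phi$ as in the definition) and consider a root $\beta=\sum n_i\alpha_i\in\Delta^+$ of minimal "height" $\sum|n_i|$ among those with mixed-sign coefficients, if any such exist; pushing this down to a contradiction uses that to build $\g_\beta$ by brackets one must at some stage bracket $\g_{\alpha_i}$-type and $\g_{-\alpha_j}$-type pieces with $i\ne j$, which (2) forbids, while $i=j$ brackets only change a single coefficient. Making this precise is the crux.

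The main obstacle is exactly this last step: turning condition (2), which is a statement only about the $\g_{\alpha_i}$ and $\g_{-\alpha_j}$ themselves, into a statement controlling \emph{all} iterated brackets, i.e.\ showing no root can have mixed-sign coordinates. The clean way is probably to mimic the standard Kac-Moody argument: show directly that $\g'' := \c(\a)\oplus\bigoplus_{\beta\in\N\Sigma\setminus 0}\g_\beta\oplus\bigoplus_{\beta\in\N\Sigma\setminus 0}\g_{-\beta}$ is a subalgebra by checking it is closed under bracketing with each generator $\g_{\pm\alpha_i}$ (using (2) to handle the cross terms $[\g_{\alpha_i},\g_{-\beta}]$ for $\beta$ a positive combination not involving $\alpha_i$, and an easy direct check when $\alpha_i$ does occur in $\beta$), hence $\g''\supseteq\g'=\g$, which forces $\Delta\subseteq\N\Sigma\sqcup(-\N\Sigma)$. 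Combined with linear independence (1), this is precisely the definition of a base. One subtlety to be careful about: Remark \ref{remark presentation} warns there are further relations beyond (2), so I must make sure the subalgebra-closure check genuinely goes through using only (1)--(3) and the ambient bracket relations of $\g$, not any extra structure; but since I am only proving $\g''$ is closed under the adjoint action of the generators, which is an identity that holds in $\g$ regardless, this should be fine.
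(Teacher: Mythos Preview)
Your approach is correct in spirit and is the standard way to justify the assertion; the paper's own proof is a single sentence (``under the conditions we have $\Delta\sub(\N\Sigma\sqcup(-\N\Sigma))$'') that leaves all details implicit, so you are supplying what the paper omits. The forward direction is exactly as you say.

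There is, however, one imprecision in your converse argument worth flagging. You propose to show that $\g''$ is closed under $\ad(\g_{\alpha_i})$, and for the cross term $[\g_{\alpha_i},\g_{-\beta}]$ with $\beta\in\N\Sigma\setminus\{0\}$ not involving $\alpha_i$ you invoke condition (2). But (2) only gives $[\g_{\alpha_i},\g_{-\alpha_j}]=0$ for \emph{simple} $\alpha_j$; to extend this to a general such $\beta$ you would need to know that $\g_{-\beta}$ already lies in the subalgebra generated by the $\g_{-\alpha_j}$ with $j\neq i$, which is part of what you are trying to prove. The clean fix---and this is what the ``standard Kac--Moody argument'' you invoke actually does---is to work instead with the subalgebras $\n^{\pm}$ \emph{generated} by the $\g_{\pm\alpha_j}$. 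These visibly have weights in $\pm\N\Sigma\setminus\{0\}$, and one shows by induction on bracket length (using Jacobi, with (2) providing the base case) that $\ad(\g_{\alpha_i})$ sends $\n^-$ into $\c(\a)+\n^-$. Then $\c(\a)+\n^++\n^-$ is stable under the adjoint action of every generator and contains them, so by (3) it equals $\g$, forcing $\Delta\sub\N\Sigma\sqcup(-\N\Sigma)$. The point is only that the induction must be run on $\n^\pm$ rather than on the a priori larger space $\g''$.
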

	\begin{proof}
		Indeed, under the conditions we have $\Delta\sub(\N\Sigma\sqcup(-\N\Sigma))$, as required.
	\end{proof}

%
%
%
%

	\subsection{Principal roots}  Observe that the involution $\theta$ defines a symmetric pair $(\g_{\ol{0}},\k_{\ol{0}})$ with the same Cartan subspace $\a$, and along with it a restricted root system $\Delta_0\sub\Delta\sub\a^*$. We call the roots in $\Delta_0$ \emph{even}.  We have that $\Delta_0$ will be a (potentially non-reduced) root system with possibly several irreducible components (Lem.~26.16, \cite{T}).  Suppose that we have a positive system $\Delta_{\Sigma}^+$ for $(\g,\k)$.  Then this induces a positive system $\Delta_{0}=\Delta_{0}^+\sqcup\Delta_{0}^{-}$, and along with it a base $\Pi\sub\Delta_0$.

	\begin{definition}
		Given a base $\Sigma$ of $\Delta$, we call the base $\Pi\sub\Delta_0^+$ determined by $\Sigma$ the \emph{principal roots} (of $\Sigma$).  We note that while $\Pi$ depends on $\Sigma$, we will surpress this dependence in writing for reasons that will become clear.
	\end{definition}
	
	\begin{cor}\label{lemma even part gend}
		The Lie algebra $\g_{\ol{0}}$ is generated by $\m_{\ol{0}}$, $\a$, $(\g_{\alpha})_{\ol{0}},$ and $(\g_{-\alpha})_{\ol{0}}$, where $\alpha$ runs over all principal roots.
	\end{cor}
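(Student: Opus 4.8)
The plan is to reduce this to the ordinary structure theory applied to the even symmetric pair $(\g_{\ol 0},\k_{\ol 0})$, which the text has already observed has Cartan subspace $\a$ and restricted root system $\Delta_0$. First I would check that $(\g_{\ol 0},\k_{\ol 0})$ genuinely fits the framework of this section: $\g_{\ol 0}$ is a reductive Lie algebra, $\theta|_{\g_{\ol 0}}$ is a form-preserving involution with fixed points $\k_{\ol 0}$, and Assumption ($\star$) holds for it because the classical picture (Sec.~26 of \cite{T}) gives $\c(\a)_{\ol 0}=\a\oplus\m_{\ol 0}$. Moreover, directly from the definition of principal roots, the base of $\Delta_0$ determined by the positive system $\Delta_0^+=\Delta_0\cap\Delta_\Sigma^+$ is precisely $\Pi$. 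Since $(\g_\alpha)_{\ol 0}\neq0$ exactly when $\alpha\in\Delta_0$, taking even parts of $\g=\c(\a)\oplus\bigoplus_{\alpha\in\Delta}\g_\alpha$ and using ($\star$) yields the decomposition $\g_{\ol 0}=\n^-\oplus(\a\oplus\m_{\ol 0})\oplus\n^+$, where $\n^{\pm}:=\bigoplus_{\alpha\in\Delta_0^+}(\g_{\pm\alpha})_{\ol 0}$.

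Next I would invoke the generation statement in this setting. Running the proof of Lemma \ref{lemma n gend by} for the pair $(\g_{\ol 0},\k_{\ol 0})$ and its base $\Pi$ shows that $\n^+$ is generated by the root spaces $(\g_\alpha)_{\ol 0}$ with $\alpha\in\Pi$, and symmetrically $\n^-$ is generated by the $(\g_{-\alpha})_{\ol 0}$ with $\alpha\in\Pi$. (Equivalently, this is the classical fact that a reductive $\g_{\ol 0}$ is generated by $\c(\a)_{\ol 0}=\a\oplus\m_{\ol 0}$ together with the restricted root spaces attached to a base of $\Delta_0$; see Sec.~26 of \cite{T}.) Consequently the subalgebra of $\g_{\ol 0}$ generated by $\m_{\ol 0}$, $\a$, and all $(\g_{\pm\alpha})_{\ol 0}$ for $\alpha\in\Pi$ contains each of $\n^-$, $\a\oplus\m_{\ol 0}$, and $\n^+$, hence is all of $\g_{\ol 0}$, which is the claim.

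The only points requiring any care are that $\Pi$ qualifies as a base of $\Delta_0$ in the sense used by Lemma \ref{lemma n gend by} and that ($\star$) holds for $(\g_{\ol 0},\k_{\ol 0})$; both are immediate, the former from the very construction of $\Pi$ and the latter from the classical structure of symmetric pairs. A minor subtlety is that $\Delta_0$ may be non-reduced and its root spaces may be higher-dimensional, so for $\alpha\in\Pi$ with $2\alpha\in\Delta_0$ the space $(\g_{2\alpha})_{\ol 0}=[(\g_\alpha)_{\ol 0},(\g_\alpha)_{\ol 0}]$ is absorbed automatically; this causes no difficulty, as Lemma \ref{lemma n gend by} is already formulated to accommodate it. I do not anticipate a real obstacle here: the content is simply the classical Iwasawa/base-generation statement transported to the even part.
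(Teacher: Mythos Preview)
Your proposal is correct and follows essentially the same route as the paper: both arguments observe that $(\g_{\ol 0},\k_{\ol 0})$ is itself a symmetric pair satisfying Assumption~($\star$), with restricted root system $\Delta_0$ and base $\Pi$, and then invoke the generation results of this section for that pair. The paper cites Corollary~\ref{cor base sufficient} directly, whereas you unwind things slightly further to Lemma~\ref{lemma n gend by}, but the content is the same.
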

	
	\begin{proof}
		This follows from Corollary \ref{cor base sufficient} applied to the pair $(\g_{\ol{0}},\k_{\ol{0}})$.
	\end{proof}
	\subsection{Properties of restricted roots}
\begin{definition}
		For a root $\alpha\in\Delta$, write $m_{\alpha}=(m_{\alpha,\ol{0}}|m_{\alpha,\ol{1}})$, where $m_{\alpha,\ol{0}}:=\dim(\g_{\alpha})_{\ol{0}}$, $m_{\alpha,\ol{1}}=\dim(\g_{\alpha})_{\ol{1}}$.
\end{definition}
\begin{lemma}\label{lemma multiples of roots}
	If $\alpha\in\Delta$, then if $k\alpha\in\Delta$ we must have $k\in\{\pm1,\pm2,\pm1/2\}$.  Further, if $2\alpha$ is a root, then $m_{2\alpha,\ol{1}}=0$.
\end{lemma}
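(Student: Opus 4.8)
The plan is to reduce both statements to facts about $\mathfrak{sl}(2)$- and $\mathfrak{osp}(1|2)$-triples living inside $\g$, together with the structure theory of $\g_{\ol 0}$ and its own restricted root system. First I would recall that by the remark above, $\Delta$ is the set of nonzero restrictions to $\a$ of the genuine roots $\widetilde\Delta\sub\h^*$ of $\g$. Since $\g$ is a symmetrizable Kac-Moody superalgebra, $\widetilde\Delta$ has the property that if $\alpha,k\alpha\in\widetilde\Delta$ then $k\in\{\pm1,\pm2,\pm\tfrac12\}$, and the only way a root is a $\tfrac12$-multiple of another is through the $\mathfrak{osp}(1|2)$-string phenomenon, where $\alpha$ is odd non-isotropic and $2\alpha$ is an even root. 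Restriction to $\a$ can only merge or kill roots, so a priori one must rule out that two unrelated roots $\beta,\gamma\in\widetilde\Delta$ restrict to $\a$-proportional functionals with a bad ratio. Here I would invoke that $\a\sub\p_{\ol 0}$ is a Cartan subspace of the symmetric pair and use the $\theta$-stability of $\h$ (Cor.~26.13 of \cite{T}): the restricted root spaces $\g_{\alpha}$, $\g_{2\alpha}$, $\g_{\alpha/2}$ are then governed by the usual $\a$-string arguments.

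The cleaner route is to work directly inside $\g$ with $\a$. Fix $\alpha\in\Delta$ and suppose $k\alpha\in\Delta$ for some real $k>0$, $k\neq1$; we want $k\in\{2,\tfrac12\}$. Pick $0\neq X\in\g_{\alpha}$ that is either even, or odd with $[X,X]\neq0$ (one of these is possible since $\g_{\alpha}\neq0$), and a suitable $Y\in\g_{-\alpha}$ so that $\{X,H,Y\}$ with $H=[X,Y]\in\a\oplus\m$ spans a copy of $\mathfrak{sl}(2)$ or $\mathfrak{osp}(1|2)$; here I use that $(-,-)$ is nondegenerate and invariant, so the pairing $\g_\alpha\times\g_{-\alpha}\to\a\oplus\m$ is nondegenerate, and that we may choose $H$ to act nontrivially by adjusting by an element of $\m$. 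Decompose $\g$ into strings under this triple. The $\a$-weights appearing in a single indecomposable string for an $\mathfrak{sl}(2)$ are $\alpha$-translates with integer steps, so any $k\alpha$ in such a string forces $k\in\Z$, hence $k=2$ is the only new possibility on the positive side, and then Lemma \ref{lemma multiples of roots}'s own earlier part of the claim about $2\alpha$ kicks in. For an $\mathfrak{osp}(1|2)$-string the half-integer weight $\tfrac12\alpha$ can occur, giving $k=\tfrac12$. Finally, one checks $k=3$ (or higher) is impossible: if $3\alpha$ were a root, then applying the argument to the pair $(\alpha, 3\alpha)$, or to $(3\alpha/2, 3\alpha)$ via an $\mathfrak{osp}(1|2)$-triple on $3\alpha/2$, produces a contradiction with the step sizes — this is the standard "no root is three times another" statement, which I would quote for $\widetilde\Delta$ and push down to $\Delta$ by noting that the preimages under restriction are themselves genuine roots.

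For the second assertion, suppose $2\alpha\in\Delta$ and we must show $m_{2\alpha,\ol 1}=0$, i.e.~$(\g_{2\alpha})_{\ol 1}=0$. The key point is that a nonzero odd vector $Z\in(\g_{2\alpha})_{\ol 1}$ would pair nontrivially, via the invariant form, with some odd $W\in\g_{-2\alpha}$, and $[Z,W]$ would be a nonzero element of $\a\oplus\m$; moreover $[Z,Z]\in(\g_{4\alpha})_{\ol 0}$, which is zero since $4\alpha\notin\Delta$ by the first part. So $Z$ generates an $\mathfrak{osp}(1|2)$ (with $2\alpha$ the non-isotropic odd root), which would force $\alpha=\tfrac12(2\alpha)\in\Delta$ and, more importantly, place $4\alpha$ in the even root system $\Delta_0$ as $2\cdot(2\alpha)$ — contradiction. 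Alternatively and more robustly: lift $Z$ to the $\g$-side, where $\g_{\widetilde\beta}$ for the responsible genuine root $\widetilde\beta$ restricting to $2\alpha$ is at most one-dimensional if odd and non-isotropic, and being odd with a nonzero restriction that is twice another restricted root contradicts the classification of root strings in $\widetilde\Delta$ recalled above. I expect the main obstacle to be the bookkeeping in relating restricted root strings to genuine root strings — specifically, ensuring that the $\a$-triple one builds actually has $H$ acting by a nonzero scalar on $\g_\alpha$ and that the string decomposition is as in the non-super $\mathfrak{sl}(2)$ or the $\mathfrak{osp}(1|2)$ theory despite the possible presence of $\m$ — but this is handled by choosing $H$ generically in $\a\oplus\m$ subject to $\alpha(H)\neq0$ and restricting attention to the $\a$-grading, which is all that enters the conclusion.
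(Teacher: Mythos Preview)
Your approach has a genuine gap, and it occurs precisely at the case that is central to this paper: singular restricted roots. You write ``pick $0\neq X\in\g_{\alpha}$ that is either even, or odd with $[X,X]\neq0$ (one of these is possible since $\g_{\alpha}\neq0$).'' This is false exactly when $\alpha$ is singular. In cases (i) and (ii) of Lemma~\ref{lemma classification rank one}, $\g_{\alpha}$ is purely odd and $2\alpha\notin\Delta$, so $[X,X]\in\g_{2\alpha}=0$ for every $X\in\g_{\alpha}$. There is then no $\s\l(2)$- or $\o\s\p(1|2)$-triple available, and your string argument never gets started. The attempt to ``push down'' from $\widetilde\Delta$ does not repair this: two roots $\widetilde\beta_1,\widetilde\beta_2\in\widetilde\Delta$ with $\widetilde\beta_2|_{\a}=3(\widetilde\beta_1|_{\a})$ need not be proportional in $\h^*$, so the fact that $\widetilde\Delta$ has no triple roots says nothing here without further work.

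The argument for the second claim is also broken. You deduce $[Z,Z]\in\g_{4\alpha}=0$ and then assert that $Z$ generates an $\o\s\p(1|2)$; these two statements contradict each other, since an $\o\s\p(1|2)$ requires $[Z,Z]\neq0$. In the ``alternative'' route, lifting $Z$ to a genuine odd root vector for some $\widetilde\beta\in\widetilde\Delta_{\ol 1}$ with $\widetilde\beta|_{\a}=2\alpha$ leaves open the case that $\widetilde\beta$ is isotropic, from which no $4\alpha$ contradiction follows.

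The paper does not attempt a uniform argument at all: its proof is simply ``this follows from the classification presented in Section~\ref{section classification}.'' One inspects each rank one pair $(\g\langle\alpha\rangle,\k\langle\alpha\rangle)$ listed in Lemma~\ref{lemma classification rank one}, reads off the root multiples and multiplicities $m_{\alpha},m_{2\alpha}$ recorded there, and observes directly that only $\pm\alpha,\pm2\alpha,\pm\alpha/2$ ever occur and that $m_{2\alpha,\ol 1}=0$ in every case. If you want a classification-free proof, you would need a separate mechanism to handle singular $\alpha$; absent that, the case check is the honest route.
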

\begin{proof}
This follows from the classification presented in Section \ref{section classification}.
\end{proof}

\begin{definition}\label{def real root}
	Following the notation of \cite{RSS}, we say a root $\alpha$ is \emph{regular} if the subalgebra generated by $\g_{\alpha},\g_{-\alpha}$ contains a subalgebra isomorphic to $\s\l(2)$.  We say a root is \emph{singular} if it is not regular.  Note that if a root $\alpha$ is regular then $(\alpha,\alpha)\neq0$, but the converse need not hold.  If $\alpha$ is regular, set $\varepsilon(\alpha)$ to be $0$ if $2\alpha$ is not a root, and otherwise set $\varepsilon(\alpha)=1$.
\end{definition}

We clearly have a decomposition $\Delta=\Delta_{reg}\sqcup\Delta_{sing}$ of roots into regular and singular roots. 

\begin{definition}
	For a nonisotropic root $\alpha\in\Delta$, set $h_{\alpha}:=\frac{2(\alpha,-)}{(\alpha,\alpha)}\in\a$.
\end{definition}

\begin{lemma}\label{lemma root on coroot}
	Let $\alpha$ be a regular root, and suppose that $\beta\in\Delta$.  Then $\beta(h_{\alpha})\in2^{\varepsilon(\alpha)}\Z$.
\end{lemma}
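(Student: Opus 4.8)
The plan is to reduce the claim to the analogous statement about the even subalgebra $\g_{\ol 0}$, where it becomes the familiar integrality fact for ordinary (possibly non-reduced) root systems. Since $\alpha$ is regular, by Definition \ref{def real root} the subalgebra $\s$ generated by $\g_{\alpha},\g_{-\alpha}$ contains a copy of $\s\l(2)$; the $\s\l(2)$-triple $(e,h_{\alpha},f)$ with $h_{\alpha}=\tfrac{2(\alpha,-)}{(\alpha,\alpha)}$ lives inside $\g_{\ol 0}$, because $[\g_{\alpha},\g_{-\alpha}]\cap\a$ is spanned by (a multiple of) $h_{\alpha}$ and $e\in(\g_\alpha)_{\ol 0}$, $f\in(\g_{-\alpha})_{\ol 0}$ — here one uses that $2\alpha$ has no odd part by Lemma \ref{lemma multiples of roots}, so when $\varepsilon(\alpha)=1$ the bracket of two odd vectors in $\g_\alpha$ cannot produce $h_\alpha$, forcing the triple into the even part as well. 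So $h_\alpha$ acts on all of $\g$, and in particular on each restricted root space $\g_\beta$, semisimply with integer eigenvalues by $\s\l(2)$-representation theory.

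Now I would split on the parity/structure of $\beta$. If $\varepsilon(\alpha)=0$, i.e. $2\alpha\notin\Delta$, then the $\ad h_\alpha$-eigenvalues on $\g$ are integers by the above, so $\beta(h_\alpha)\in\Z=2^{\varepsilon(\alpha)}\Z$ and we are done. If $\varepsilon(\alpha)=1$, then $2\alpha\in\Delta$ and I want $\beta(h_\alpha)\in2\Z$. The point is that $h_\alpha$ is the coroot of $\alpha$ but $\alpha(h_\alpha)=2$ while $2\alpha$ is also a root, so in fact the primitive coroot of the $\s\l(2)$ generated by $\g_{2\alpha},\g_{-2\alpha}$ is $h_\alpha$ (note $h_{2\alpha}=\tfrac12 h_\alpha$ is not integral on everything, but $h_\alpha$ is). Consider instead the $\s\l(2)$ associated to $2\alpha$: its semisimple element is $h_{2\alpha}=\tfrac12 h_\alpha$, and integrality of $\ad h_{2\alpha}$-eigenvalues gives exactly $\beta(h_\alpha)=2\,\beta(h_{2\alpha})\in2\Z$. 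To justify that $2\alpha$ regular (so that this $\s\l(2)$ exists): $2\alpha$ is an even root with nonzero bilinear form, and since $\varepsilon(2\alpha)=0$ automatically (as $4\alpha\notin\Delta$ by Lemma \ref{lemma multiples of roots}), the subalgebra generated by $\g_{2\alpha},\g_{-2\alpha}$ contains $\s\l(2)$ — this is precisely the classical statement for the even restricted root system $\Delta_0$, using Corollary \ref{lemma even part gend}.

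The one technical point to nail down — and the main obstacle — is that $\ad h_\alpha$ really acts with integer eigenvalues on every $\g_\beta$ simultaneously, i.e. that there is a single $\s\l(2)$-triple $(e,h_\alpha,f)$ inside $\g_{\ol 0}$ under which $\g$ is a (possibly infinite-dimensional, but locally finite) module. Local finiteness follows because $e=\g_\alpha$-vectors and $f=\g_{-\alpha}$-vectors act locally nilpotently on $\g$ (the $\a$-grading is bounded on each $\g_\beta+\Z\alpha$-string, since restricted roots have bounded $\alpha$-strings by Lemma \ref{lemma multiples of roots} applied along the string — any root string through $\beta$ in the direction $\alpha$ is finite). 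Then the standard $\s\l(2)$-theory applies string by string and yields the integrality of $\beta(h_\alpha)$. Assembling: in the $\varepsilon(\alpha)=0$ case we land in $\Z$, in the $\varepsilon(\alpha)=1$ case we land in $2\Z$, which is the desired $\beta(h_\alpha)\in 2^{\varepsilon(\alpha)}\Z$.
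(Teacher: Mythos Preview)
Your proof is correct and follows essentially the same approach as the paper: use the $\s\l(2)$ attached to $\alpha$ when $\varepsilon(\alpha)=0$, and the $\s\l(2)$ attached to $2\alpha$ (whose semisimple element is $h_{2\alpha}=\tfrac12 h_\alpha$) when $\varepsilon(\alpha)=1$. Much of your surrounding discussion---whether the triple lives in $\g_{\ol 0}$, the local finiteness argument via root strings---is unnecessary since $\g$ is finite-dimensional, and your aside about brackets of odd vectors in $\g_\alpha$ not producing $h_\alpha$ is beside the point (such brackets land in $\g_{2\alpha}$, not $\a$); the paper's proof simply invokes $\s\l(2)$-representation theory directly in each case.
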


\begin{proof}
	If $\varepsilon(\alpha)=0$, then the statement follows from the representation theory of $\s\l(2)$. If $\varepsilon(\alpha)=1$, we see that $\g_{-2\alpha},\g_{2\alpha}$ will generate a copy of $\s\l(2)$ by Lemma \ref{lemma multiples of roots}, so that \linebreak $\beta(h_{2\alpha})=\frac{1}{2}\beta(h_{\alpha})\in\Z$.  This forces $\beta(h_{\alpha})\in2\Z$.
\end{proof}

	\subsection{Rank one sub-pairs}
	For $\alpha\in\Delta$, set
	\[
	\g\langle\alpha\rangle:=\c(\a)+\bigoplus\limits_{n\in\Q_{\neq0}}\g_{n\alpha}
	\]
	Then $\theta$ stabilizes $\g\langle\alpha\rangle$, and we write $\k\langle\alpha\rangle$ for the fixed subalgebra.  Notice that $(\g\langle\alpha\rangle,\k\langle\alpha\rangle)$ will be a supersymmetric pair of rank 1.

	\subsection{Reflections in simple roots}\label{section reflection simple}
	
	\begin{definition}
			For $\alpha\in\Delta_{reg}$ define $r_{\alpha}\in GL(\a^*)$ to be the usual reflection coming from the baby Weyl group, i.e.~ $r_{\alpha}(\lambda)=\lambda-2\lambda(h_{\alpha})\alpha$. 		
	\end{definition}
	
	Let $\Sigma$ be a base.  For a singular root $\alpha\in\Sigma$, define $r_{\alpha}\Sigma$ to be the base of the positive system $(\Delta^+_{\Sigma}\setminus\{\alpha\})\sqcup\{-\alpha\}$.    Note that the latter is indeed a positive system, as we may define
	\begin{equation}\label{eqn phi sigma alpha}
	\phi_{\Sigma,\alpha}(\alpha)=\epsilon,  \ \ \ \phi_{\Sigma,\alpha}(\beta)=1 \ \text{ for }\beta\in\Sigma\setminus\{\alpha\}
	\end{equation}
	where $\epsilon$ is a small negative number.  Then the positive system obtained from $\phi_{\Sigma,\alpha}$ will be $(\Delta^+_{\Sigma}\setminus\{\alpha\})\sqcup\{-\alpha\}$.
	
	We will call the application of $r_{\alpha}$ to a base a \emph{singular reflection} if $\alpha$ is singular.  
	\begin{lemma}\label{lemma sing refl preserves prin roots}
		Any two bases obtained from one another by a sequence of singular reflections have the same principal roots.
	\end{lemma}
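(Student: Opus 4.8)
The statement to prove is Lemma \ref{lemma sing refl preserves prin roots}: a sequence of singular reflections does not change the principal roots $\Pi$. Since singular reflections are applied one at a time, it suffices to treat a single singular reflection $r_\alpha$ with $\alpha \in \Sigma$ singular, and show that $\Sigma$ and $r_\alpha\Sigma$ determine the same base $\Pi \subset \Delta_0^+$. The key observation is that a singular root $\alpha$, by definition, has the property that neither $\alpha$ nor $2\alpha$ is a restriction of an even root of $\g$, i.e.\ $\alpha \notin \Delta_0$ and $2\alpha \notin \Delta_0$; equivalently $\pm\alpha$ contributes nothing to $\Delta_0$. The plan is therefore to show that passing from the positive system $\Delta^+_\Sigma$ to $(\Delta^+_\Sigma \setminus \{\alpha\}) \sqcup \{-\alpha\}$ induces the \emph{same} positive system on $\Delta_0$, and hence the same base $\Pi$.

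**Key steps.** First I would recall that $\Pi$ is defined as the base of $\Delta_0$ associated to the positive system $\Delta_0^+ = \Delta_0 \cap \Delta^+_\Sigma$. So it is enough to check $\Delta_0 \cap \Delta^+_\Sigma = \Delta_0 \cap \Delta^+_{r_\alpha\Sigma}$. Using the explicit homomorphisms, $\Delta^+_\Sigma$ comes from $\phi_\Sigma$ (value $1$ on every element of $\Sigma$), while $\Delta^+_{r_\alpha\Sigma}$ comes from $\phi_{\Sigma,\alpha}$ of \eqref{eqn phi sigma alpha}, which agrees with $\phi_\Sigma$ on $\Sigma \setminus \{\alpha\}$ but sends $\alpha \mapsto \epsilon < 0$ for small $|\epsilon|$. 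Now take any $\gamma \in \Delta_0$ and write $\gamma = \sum_{\beta \in \Sigma} c_\beta \beta$ with all $c_\beta$ of the same sign (integers). I claim $c_\alpha = 0$: indeed, since $\alpha$ is singular, $\g_\alpha$ and $\g_{2\alpha}$ have no even part that is a restricted root in $\Delta_0$ — more precisely, one uses Corollary \ref{lemma even part gend}, which says $\g_{\ol 0}$ is generated by $\m_{\ol 0}, \a$, and the even parts $(\g_{\pm\pi})_{\ol 0}$ for $\pi$ principal, together with the fact that $\alpha \notin \Pi$ whenever $\alpha$ is singular (a singular simple root cannot be a principal root, since principal roots lie in $\Delta_0$ and generate $\Delta_0$, while $\alpha, 2\alpha \notin \Delta_0$). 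Actually the cleanest route: every $\gamma \in \Delta_0$ is a $\Z$-combination of $\Pi$, and every principal root lies in $\Delta_0$, hence in $\N(\Sigma\setminus\{\alpha\})$ or $-\N(\Sigma\setminus\{\alpha\})$ once we know $\alpha$ appears in no principal root. So I would establish the sublemma: \emph{if $\alpha \in \Sigma$ is singular then no element of $\Pi$ has nonzero $\alpha$-coefficient when expanded in $\Sigma$}. Granting this, $\Z\Pi \subset \newspan(\Sigma \setminus \{\alpha\})$, so for $\gamma \in \Delta_0$ we have $c_\alpha = 0$, whence $\phi_\Sigma(\gamma) = \phi_{\Sigma,\alpha}(\gamma)$, and the two positive systems on $\Delta_0$ coincide. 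Therefore $\Pi$ is unchanged, and induction on the length of the reflection sequence finishes the proof.

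**Main obstacle.** The crux is the sublemma that a singular simple root $\alpha$ occurs in no principal root (equivalently, $\Pi \subset \Z(\Sigma \setminus \{\alpha\})$). I would argue as follows: a principal root $\pi \in \Pi \subset \Delta_0^+ = \N\Sigma \cap \Delta_0$; write $\pi = \sum_\beta c_\beta \beta$ with $c_\beta \in \Z_{\geq 0}$. If $c_\alpha \geq 1$, then since $\pi$ is a \emph{simple} root of $\Delta_0$ and $\Delta_0$ is generated (as a root system) by $\Pi$, one can derive a contradiction with the height-minimality of simple roots — or, more robustly, use Corollary \ref{lemma even part gend}: $\g_{\ol 0}$ is generated by $\m_{\ol 0}$, $\a$, and $(\g_{\pm\pi'})_{\ol 0}$ for $\pi' \in \Pi$; since $\alpha$ is singular, $(\g_{\pm\alpha})_{\ol 0}$ and $(\g_{\pm 2\alpha})_{\ol 0}$ are not among any root space reachable inside $\g_{\ol 0}$ with a nonzero $\a$-weight that is a positive multiple of $\alpha$ alone — but a priori one still must rule out $\alpha$ appearing \emph{inside} a principal root with other roots. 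The clean finish is dimensional/combinatorial: in the lift $\widetilde\Sigma \subset \h^*$ used in Lemma \ref{lemma n gend by}, each restricted principal root lifts to a sum of roots of $\g_{\ol 0}$, i.e.\ of $\theta$-stable or $\theta$-paired even roots, and a singular $\alpha$ lifts only to roots $\widetilde\alpha$ with $\widetilde\alpha \pm \theta\widetilde\alpha \notin \widetilde\Delta$ in a way incompatible with contributing to $\Delta_0$; chasing this shows $c_\alpha = 0$. I expect writing this last step carefully — linking "singular" to "absent from every principal root" via the structure of $\Delta_0$ — to be the only real work; everything else is bookkeeping with the defining homomorphisms $\phi$ and an induction on reflection-sequence length.
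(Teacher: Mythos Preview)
Your overall strategy---reduce to a single singular reflection and show that $\Delta_0\cap\Delta^+_\Sigma=\Delta_0\cap\Delta^+_{r_\alpha\Sigma}$---is exactly right, and is what the paper does. But the paper finishes in one line from the \emph{definition} of $r_\alpha\Sigma$: by construction $\Delta^+_{r_\alpha\Sigma}=(\Delta^+_\Sigma\setminus\{\alpha\})\sqcup\{-\alpha\}$, and since a singular $\alpha$ satisfies $\alpha\notin\Delta_0$, intersecting both sides with $\Delta_0$ gives the same set. There is nothing more to do.

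Your route via the homomorphisms $\phi_\Sigma,\phi_{\Sigma,\alpha}$ breaks down at the sublemma. The claim that a singular simple root $\alpha$ has zero coefficient in every principal root (equivalently, that $c_\alpha=0$ for every $\gamma\in\Delta_0$) is \emph{false}. For the diagonal pair $(\g\l(2|2)\times\g\l(2|2),\g\l(2|2))$ the restricted root system is that of $\g\l(2|2)$; taking the base $\Sigma=\{\epsilon_1-\delta_1,\ \delta_1-\epsilon_2,\ \epsilon_2-\delta_2\}$ (all singular isotropic), the principal root $\epsilon_1-\epsilon_2=(\epsilon_1-\delta_1)+(\delta_1-\epsilon_2)$ has $c_{\epsilon_1-\delta_1}=1\neq 0$. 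So neither ``$c_\alpha=0$ for $\gamma\in\Delta_0$'' nor ``$\Pi\subset\Z(\Sigma\setminus\{\alpha\})$'' holds in general, and the lift argument you sketch at the end cannot rescue it.

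If you want to keep the $\phi$-argument, the correct replacement for your sublemma is much weaker and immediate: for $\gamma\in\Delta_0$ written as $\sum_{\beta\in\Sigma}c_\beta\beta$ with all $c_\beta$ of one sign, it cannot happen that $c_\beta=0$ for all $\beta\neq\alpha$, since that would force $\gamma\in\{\alpha,2\alpha\}\cap\Delta_0=\varnothing$. Hence $\sum_{\beta\neq\alpha}c_\beta\neq 0$, and for $|\epsilon|$ small enough $\phi_{\Sigma,\alpha}(\gamma)=\sum_{\beta\neq\alpha}c_\beta+\epsilon c_\alpha$ has the same sign as $\phi_\Sigma(\gamma)=\sum_\beta c_\beta$. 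This gives the equality of positive even roots without any claim about $c_\alpha$ vanishing---but at that point you have essentially reproduced the paper's one-line proof.
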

	\begin{proof}
		We see that $(\Delta^+_{\Sigma}\setminus\{\alpha\})\sqcup\{-\alpha\}$ and $\Delta^+$ have the same set of positive even roots, from which the statement follows.
	\end{proof}

	\begin{lemma}\label{lemma sing reflection on base}
		Suppose that $\alpha\in\Sigma$ is simple (singular or regular).  Then $r_{\alpha}\Sigma$ consists of the roots $r_{\alpha}\alpha=-\alpha$ along with $r_{\alpha}\beta$ for $\beta\in\Sigma\setminus\{\alpha\}$, where $r_{\alpha}\beta=\beta+k_{\alpha\beta}\alpha$, where $k_{\alpha\beta}$ is the maximal non-negative integer $k$ such that $\beta+k\alpha\in\Delta$.
	\end{lemma}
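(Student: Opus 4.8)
The plan is to pin down the positive system that $r_\alpha\Sigma$ is the base of, write down the natural candidate for that base, and check it really is one. Enumerate $\Sigma=\{\alpha,\beta_1,\dots,\beta_{k-1}\}$; each $k_{\alpha\beta_i}$ is a well-defined non-negative integer, as $k=0$ is admissible (because $\beta_i\in\Delta$) and the admissible $k$ are bounded (because $\Delta$ is finite). Put $\beta_i':=\beta_i+k_{\alpha\beta_i}\alpha$ and
\[
\Sigma':=\{-\alpha\}\cup\{\,\beta_1',\dots,\beta_{k-1}'\,\}.
\]
Let $\widehat{\Delta}^{+}$ denote the positive system $r_\alpha\Sigma$ is the base of. If $\alpha$ is singular this is by definition $(\Delta^+_\Sigma\setminus\{\alpha\})\sqcup\{-\alpha\}$ (here $2\alpha\notin\Delta$, so nothing else moves), and it is a positive system by \eqref{eqn phi sigma alpha}. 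If $\alpha$ is regular, then $r_\alpha$ lies in the baby Weyl group and hence permutes $\Delta$, so $\widehat{\Delta}^{+}=r_\alpha(\Delta^+_\Sigma)$, which equals $(\Delta^+_\Sigma\setminus\{\alpha\})\sqcup\{-\alpha\}$ when $2\alpha\notin\Delta$ and $(\Delta^+_\Sigma\setminus\{\alpha,2\alpha\})\sqcup\{-\alpha,-2\alpha\}$ otherwise; in the latter case $-2\alpha=2(-\alpha)\in\N\Sigma'$, so it causes no trouble. By the bijection between bases and positive systems it now suffices to prove that $\Sigma'$ is a base of $\widehat{\Delta}^{+}$, i.e.\ that $\Sigma'$ is linearly independent and $\Delta\subseteq\N\Sigma'\sqcup(-\N\Sigma')$; equivalently one may verify conditions (1)--(3) of \cref{cor base sufficient}.

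The soft parts are quick. Linear independence of $\Sigma'$ follows from that of $\Sigma$, since in the orderings $(\alpha,\beta_1,\dots,\beta_{k-1})$ and $(-\alpha,\beta_1',\dots,\beta_{k-1}')$ the transition matrix is triangular with diagonal entries $\pm1$; in particular $|\Sigma'|=|\Sigma|$. Each $\beta_i'$ lies in $\Delta\cap\N\Sigma$ and has nonzero $\beta_i$-coordinate, hence is not a multiple of $\alpha$, so $\Sigma'\subseteq\widehat{\Delta}^{+}$. The vanishing relations needed for condition (2) of \cref{cor base sufficient} are formal: $[\g_{\pm\alpha},\g_{\mp\beta_i'}]$ lands in $\g_{\pm(\beta_i+(k_{\alpha\beta_i}+1)\alpha)}=0$ by maximality of $k_{\alpha\beta_i}$, and for $i\ne j$ the bracket $[\g_{\beta_i'},\g_{-\beta_j'}]$ lands in $\g_{\beta_i-\beta_j+c\alpha}=0$ with $c=k_{\alpha\beta_i}-k_{\alpha\beta_j}$, since $\beta_i-\beta_j+c\alpha\notin\N\Sigma\sqcup(-\N\Sigma)\supseteq\Delta$. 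Finally, when $\alpha$ is regular one must also match the two descriptions of $r_\alpha\beta_i$: as $\beta_i,\alpha\in\Sigma$, the element $\beta_i-\alpha$ is not a root, so the $\alpha$-string through $\beta_i$ begins at $\beta_i$; by the representation theory of the $\s\l(2)$ generated by $\g_{\pm\alpha}$, together with \cref{lemma root on coroot}, this string is unbroken and ends at $\beta_i-\beta_i(h_\alpha)\alpha$, whence $k_{\alpha\beta_i}=-\beta_i(h_\alpha)$ and $r_\alpha\beta_i=\beta_i-\beta_i(h_\alpha)\alpha=\beta_i'$.

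The one point with genuine content is that $\Delta\subseteq\N\Sigma'\sqcup(-\N\Sigma')$. Since $\widehat{\Delta}^{+}\sqcup(-\widehat{\Delta}^{+})=\Delta$, it is enough to see that every $\gamma\in\Delta^+_\Sigma$ which is not a multiple of $\alpha$ lies in $\N\Sigma'$: rewriting $\gamma=n_\alpha\alpha+\sum_i n_{\beta_i}\beta_i$ via $\beta_i=\beta_i'+k_{\alpha\beta_i}(-\alpha)$ and $\alpha=-(-\alpha)$ keeps all $\beta_i'$-coefficients equal to $n_{\beta_i}\ge 0$ with at least one positive, so $\gamma\notin-\N\Sigma'$, and the coefficient of $-\alpha$ comes out as $\sum_i n_{\beta_i}k_{\alpha\beta_i}-n_\alpha$. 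Thus everything reduces to the inequality
\[
n_\alpha\ \le\ \sum_i n_{\beta_i}\,k_{\alpha\beta_i}\qquad\text{for every such }\gamma,
\]
which morally says one cannot pack more copies of $\alpha$ into a root than its simple constituents permit. This is the step where soft arguments do not suffice and one must use the structure of restricted root systems; I would prove it by localizing to the rank-one sub-pair $\g\langle\alpha\rangle$ and to the subalgebra generated by $\c(\a)$ and $\g_{\pm\alpha},\g_{\pm\beta_i}$, where the $\alpha$-strings are unbroken and short by the classification of \cref{section classification}, in the spirit of the usual odd-reflection computations. Given the inequality, $\Sigma'$ is a base of $\widehat{\Delta}^{+}$, hence equals $r_\alpha\Sigma$ by uniqueness of bases, and the lemma follows. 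I expect this structural input to be the only real obstacle; the remainder is bookkeeping with the maximality of $k_{\alpha\beta}$ and the defining property of a base.
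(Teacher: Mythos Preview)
Your argument is well-organized but leaves the decisive step unproven: the inequality $n_\alpha\le\sum_i n_{\beta_i}k_{\alpha\beta_i}$ for every $\gamma=n_\alpha\alpha+\sum_i n_{\beta_i}\beta_i\in\Delta_\Sigma^+$ not a multiple of $\alpha$. You acknowledge this yourself and gesture toward a proof via localization to $\g\langle\alpha\rangle$ and the rank-one classification, but that sketch is not a proof, and it is unclear how rank-one information could bound $n_\alpha$ against the $k_{\alpha\beta_i}$ for a root involving several $\beta_i$ simultaneously. Your alternative route via \cref{cor base sufficient} is also incomplete: you verify (1) and (2) but never condition~(3), and surjectivity of the iterated brackets $[\g_{-\alpha},[\dots,[\g_{-\alpha},\g_{\beta_i'}]\dots]]\to\g_{\beta_i}$ onto possibly multi-dimensional root spaces is not automatic here.

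The paper bypasses both difficulties by exploiting the functional $\phi_{\Sigma,\alpha}$ of \eqref{eqn phi sigma alpha} directly. The base of a positive system is exactly its set of indecomposable elements, and $\phi_{\Sigma,\alpha}$ makes indecomposability of each candidate root immediate: $-\alpha$ carries the globally minimal positive value $-\epsilon$; and among positive roots with $\sum_j n_{\beta_j}=1$ and $n_{\beta_i}=1$, the one with largest $\alpha$-coefficient---namely $\beta_i'$---has the smallest $\phi_{\Sigma,\alpha}$-value (since $\epsilon<0$), so a putative decomposition $\beta_i'=\gamma_1+\gamma_2$ in $\widehat\Delta^+$ forces one summand to have $\beta$-height $0$, hence to equal $-m\alpha$ with $m\ge1$, and the other to be $\beta_i+(k_{\alpha\beta_i}+m)\alpha\notin\Delta$. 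Thus $\Sigma'$ consists of indecomposable positive roots; being linearly independent of the correct cardinality, it equals $r_\alpha\Sigma$. Your inequality then drops out as a \emph{consequence} rather than an input. This is what the paper's terse phrase ``take the minimum positive values under $\phi_{\Sigma,\alpha}$'' is encoding.
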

	
	\begin{proof}
		Indeed, it is clear that with $\phi_{\Sigma,\alpha}$ as defined in \ref{eqn phi sigma alpha}, the roots $-\alpha$ along with $\beta+k_{\alpha\beta}\alpha$ for $\beta\in\Sigma\setminus\{\alpha\}$ will be both linearly independent and will take the minimum positive values under $\phi_{\Sigma,\alpha}$ as required. 		
	\end{proof}
	
	\begin{remark}
		If $\alpha\in\Sigma$ is singular and $\beta\in\Sigma$, then $r_{\alpha}\beta=\beta+k_{\alpha\beta}\alpha$ for $k_{\alpha\beta}\in\{0,1,2\}$.  This may be checked case by case using the classification of supersymmetric pairs.
	\end{remark}
	
	\begin{lemma}\label{lemma bases equiv}
		Any two bases $\Sigma,\Sigma'$ may be obtained from one another by a sequence of singular reflections and reflections $r_{\alpha}$ for $\alpha\in\Delta_{reg}$.
	\end{lemma}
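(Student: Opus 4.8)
The plan is to restate the lemma in terms of positive systems and then run an induction on a discrepancy count, exactly as in the classical proof that two bases of a root system are Weyl-conjugate, but allowing singular reflections to play the role of the ``odd'' moves. Recall that $\Sigma\mapsto\Delta_\Sigma^+$ is a bijection between bases and positive systems, and that $\Delta_{\Sigma'}^+$, being a positive system, is of the form $\{\beta\in\Delta:\phi'(\beta)>0\}$ for some group homomorphism $\phi':\Z\Delta\to\R$ with $\phi'(\beta)\neq0$ for all $\beta\in\Delta$ (for instance $\phi'=\phi_{\Sigma'}$). For a base $\Sigma$ set
\[
N(\Sigma):=\#\{\beta\in\Delta_\Sigma^+:\phi'(\beta)<0\},
\]
a finite non-negative integer. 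If $N(\Sigma)=0$ then $\Delta_\Sigma^+\subseteq\Delta_{\Sigma'}^+$; since $\Delta=\Delta_\Sigma^+\sqcup(-\Delta_\Sigma^+)=\Delta_{\Sigma'}^+\sqcup(-\Delta_{\Sigma'}^+)$ by the definition of a base, this forces $\Delta_\Sigma^+=\Delta_{\Sigma'}^+$, hence $\Sigma=\Sigma'$. So it is enough to show that whenever $N(\Sigma)>0$ there is an allowed reflection carrying $\Sigma$ to a base with strictly smaller $N$; the lemma then follows by induction on $N(\Sigma)$, applying the inductive hypothesis to the new base together with $\Sigma'$ and prepending the reflection.

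So suppose $N(\Sigma)>0$, and pick $\beta\in\Delta_\Sigma^+=\N\Sigma$ with $\phi'(\beta)<0$. Expanding $\beta$ in the base $\Sigma$ with non-negative integer coefficients shows there is a simple root $\alpha\in\Sigma$ with $\phi'(\alpha)<0$. If $\alpha$ is singular, apply the singular reflection $r_\alpha$: by definition $\Delta_{r_\alpha\Sigma}^+=(\Delta_\Sigma^+\setminus\{\alpha\})\sqcup\{-\alpha\}$, and since $\phi'(-\alpha)>0$ we get $N(r_\alpha\Sigma)=N(\Sigma)-1$. If $\alpha$ is regular, apply $r_\alpha\in GL(\a^*)$, so that $\Delta_{r_\alpha\Sigma}^+=r_\alpha(\Delta_\Sigma^+)$ and hence $N(r_\alpha\Sigma)=\#\{\beta\in\Delta_\Sigma^+:\phi'(r_\alpha\beta)<0\}$. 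Let $M=\{k\alpha:k\in\Z_{>0}\}\cap\Delta$; because $\alpha$ is simple, $\frac{1}{2}\alpha\notin\Delta$, so $M\subseteq\{\alpha,2\alpha\}$. The point is that $r_\alpha$ restricts to a bijection of $\Delta_\Sigma^+\setminus M$: if $\beta\in\Delta_\Sigma^+$ is not a multiple of $\alpha$, then in the $\Sigma$-expansion of $r_\alpha\beta=\beta-2\beta(h_\alpha)\alpha$ the coefficients of the simple roots $\neq\alpha$ agree with those of $\beta$ and are not all zero, and $r_\alpha\beta\in\Delta$, so $r_\alpha\beta\in\Delta_\Sigma^+\setminus M$. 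On $M$ we have $r_\alpha(k\alpha)=-k\alpha$ with $\phi'(-k\alpha)>0$, so the elements of $M$ contribute nothing to $N(r_\alpha\Sigma)$ although they contribute $|M|$ to $N(\Sigma)$ (as $\phi'(\alpha),\phi'(2\alpha)<0$); since $r_\alpha$ permutes $\Delta_\Sigma^+\setminus M$, we conclude $N(r_\alpha\Sigma)=N(\Sigma)-|M|<N(\Sigma)$. In both cases $N$ drops, completing the induction.

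The point that needs the most care is the regular case, and specifically the input that $r_\alpha$ is an automorphism of the restricted root system $\Delta$ for $\alpha\in\Delta_{reg}$: for even regular $\alpha$ this is the classical statement that the baby Weyl group of $(\g_{\ol{0}},\k_{\ol{0}})$ acts on $\Delta_0\subseteq\Delta$, and for odd regular $\alpha$ one can reduce to an even root via $r_\alpha=r_{2\alpha}$ when $2\alpha\in\Delta$, or otherwise read the claim off the rank-one classification of Section \ref{section classification}; granted this, the coefficient bookkeeping above is routine. One should also note the harmless facts that $r_\alpha\Sigma$ is again a base in either case (in the regular case because $r_\alpha$ is a linear automorphism of $\a^*$ preserving $\Delta$, and in the singular case by the explicit description preceding Lemma \ref{lemma sing reflection on base}) and that every reflection used is among those allowed in the statement, since each is $r_\alpha$ for a simple root $\alpha$ of the current base that is either singular or regular.
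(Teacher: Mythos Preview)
Your proof is correct and follows essentially the same approach as the paper's: an induction on the size of the discrepancy set $\Delta_\Sigma^+\cap\Delta_{\Sigma'}^-$, reflecting in a simple root of $\Sigma$ that lies in $\Delta_{\Sigma'}^-$. Your version is simply more detailed than the paper's one-line argument, in particular carefully verifying (which the paper leaves implicit) that for a regular simple root $\alpha$ the reflection $r_\alpha$ permutes $\Delta_\Sigma^+\setminus\{\alpha,2\alpha\}$ and hence decreases the count.
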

	\begin{proof}
	If $\Sigma\neq\Sigma'$, then there exists $\alpha\in\Sigma\cap\Delta_{\Sigma'}^-$.  Thus we have $|\Delta^+_{r_{\alpha}\Sigma}\cap\Delta^+_{\Sigma'}|<|\Delta^+_{\Sigma}\cap\Delta^+_{\Sigma'}|$, and we may conclude by induction.
	\end{proof}
	
%
	\subsection{Equivalent positive systems} We partition the collection of bases into equivalence classes, declaring that $\Sigma\sim\Sigma'$ if there exists a sequence of singular reflections $r_{\alpha_1},\dots,r_{\alpha_k}$ such that $\Sigma'=r_{\alpha_k}\cdots r_{\alpha_1}\Sigma$.   Note that a given equivalence class $\mathcal{S}$ of bases has a well-defined set $\Pi$ of principal roots by Lemma \ref{lemma sing refl preserves prin roots}.
	
	\begin{lemma}\label{lemma simple system existence}
		For any base $\Pi\sub\Delta_0$ and any $\gamma\in\Pi$, there exists some base $\Sigma$ for which $\Pi\sub\Delta_{\Sigma}^+$ and either $\gamma\in\Sigma$ or $\gamma/2\in\Sigma$.
	\end{lemma}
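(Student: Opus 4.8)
The plan is to realize the desired base $\Sigma$ as the base of a positive system $\Delta^+$ of $\Delta$ cut out by a linear functional on $\a^*$ which is positive on $\Delta_0^+$ (the positive system of $\Delta_0$ with base $\Pi$) but makes the ``indivisible part'' of $\gamma$ as small as possible, hence indecomposable.

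\textbf{Reduction.} Put $\bar\gamma := \gamma/2$ if $\gamma/2 \in \Delta$ and $\bar\gamma := \gamma$ otherwise; by Lemma~\ref{lemma multiples of roots} then $\gamma \in \{\bar\gamma, 2\bar\gamma\}$ and $\bar\gamma/2 \notin \Delta$. It suffices to produce a positive system $\Delta^+$ of $\Delta$ with $\Delta_0^+ \subseteq \Delta^+$ in which $\bar\gamma$ is \emph{indecomposable}, i.e.\ not a sum of two elements of $\Delta^+$. Indeed, writing $\Sigma$ for the base of $\Delta^+$, Lemma~\ref{lemma n gend by} says $\n_{\Sigma}^+$ is generated by $\{\g_\alpha : \alpha \in \Sigma\}$, so any $\mu \in \Delta^+ \setminus \Sigma$ has $\g_\mu \subseteq \sum_{\alpha \in \Sigma}[\g_\alpha, \g_{\mu - \alpha}]$, forcing $\mu - \alpha \in \Delta^+$ for some $\alpha \in \Sigma$; thus every non-simple positive root is a sum of two positive roots, so $\bar\gamma \in \Sigma$, and since $\Pi \subseteq \Delta_0^+ = \Delta^+ \cap \Delta_0 = \Delta^+_\Sigma \cap \Delta_0$ and $\gamma$ or $\gamma/2$ equals $\bar\gamma$, we get $\Pi \subseteq \Delta^+_\Sigma$ with $\gamma \in \Sigma$ or $\gamma/2 \in \Sigma$.

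\textbf{Construction of $\Delta^+$.} Let $V := \newspan_{\R}(\Delta)$. Since $\Pi$ is a base of $\Delta_0$ it is a basis of $\newspan_{\R}(\Delta_0)$; write $\Pi = \{\gamma_1, \dots, \gamma_l\}$ with $\gamma_1 = \gamma$, and enlarge to a basis $\gamma_1, \dots, \gamma_l, \delta_{l+1}, \dots, \delta_d$ of $V$. Pick $c_2, \dots, c_l > 0$ and $e_{l+1}, \dots, e_d \in \R$ generically, so that for every $\alpha \in \Delta$, written $\alpha = \sum_i a_i(\alpha) \gamma_i + \sum_j b_j(\alpha)\delta_j$, the quantity $S(\alpha) := \sum_{i=2}^l a_i(\alpha) c_i + \sum_j b_j(\alpha) e_j$ is nonzero unless $\alpha$ is a multiple of $\gamma_1$ (this is the complement of finitely many hyperplanes in $(c,e)$-space and meets $\{c_i > 0\}$). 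For $t > 0$ let $\phi_t \colon V \to \R$ be linear with $\phi_t(\gamma_1) = t$, $\phi_t(\gamma_i) = c_i$, $\phi_t(\delta_j) = e_j$, so $\phi_t(\alpha) = t\,a_1(\alpha) + S(\alpha)$. For $t$ small enough, $\phi_t$ is nonzero on every root, with sign equal to that of $S(\alpha)$ when $S(\alpha) \neq 0$ and to that of $a_1(\alpha)$ otherwise; let $\Delta^+ := \{\alpha \in \Delta : \phi_t(\alpha) > 0\}$, with base $\Sigma$.

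\textbf{Verification.} $(i)$ $\Delta_0^+ \subseteq \Delta^+$: a positive even root $\beta$ has all $\delta_j$-coordinates zero (as $\Delta_0 \subseteq \newspan \Pi$) and nonnegative $\gamma_i$-coordinates (as $\beta \in \N\Pi$), so $S(\beta) \geq 0$, with equality only when $\beta$ is a positive multiple of $\gamma_1$; in either case $\phi_t(\beta) > 0$. $(ii)$ $\bar\gamma$ is indecomposable: it is a positive multiple of $\gamma_1$, so $\bar\gamma \in \Delta^+$ and $S(\bar\gamma) = 0$; if $\bar\gamma = \alpha + \beta$ with $\alpha, \beta \in \Delta^+$ then $S(\alpha) + S(\beta) = 0$ with $S(\alpha), S(\beta) \geq 0$ (positive roots have nonnegative $S$), so $S(\alpha) = S(\beta) = 0$, whence $\alpha, \beta$ are positive multiples of $\gamma_1$, lying in $\{\tfrac12\gamma_1, \gamma_1, 2\gamma_1\}$ by Lemma~\ref{lemma multiples of roots}; but no two of these sum to $\bar\gamma \in \{\gamma_1, \tfrac12\gamma_1\}$ — the only candidate, $\tfrac12\gamma_1 + \tfrac12\gamma_1 = \gamma_1$, requires $\gamma/2 \in \Delta$, contradicting $\bar\gamma = \gamma_1$. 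By the reduction, this finishes the proof.

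\textbf{Expected main obstacle.} The crux — and the only point where it matters that $\gamma$ is a \emph{simple} root of $\Delta_0$ — is arranging $(i)$ and $(ii)$ at once. A crude attempt to shrink $\phi(\gamma)$ by subtracting a large multiple of some functional would wreck positivity on the rest of $\Delta_0^+$, since $\gamma \in \newspan(\Delta_0)$. What makes it work is that the dominant cone of $\Delta_0^+$ is simplicial with $\gamma$ on its boundary, so it contains functionals with $\phi(\gamma)$ arbitrarily small, together with the fact that $\Delta_0 \subseteq \newspan(\Pi)$, so that the auxiliary directions $\delta_j$ (needed to make $\phi_t$ regular on the genuinely odd roots) are invisible to the even roots.
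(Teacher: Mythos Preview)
Your proof is correct and follows essentially the same approach as the paper: both construct a linear functional $\phi$ that is positive but very small on $\gamma$, large on the remaining elements of $\Pi$, and suitably generic on a complement to $\newspan(\Pi)$, then take the associated positive system. Your version is more explicit than the paper's (which is quite terse), in particular spelling out the indecomposability criterion for membership in $\Sigma$ and verifying it via the auxiliary functional $S$, whereas the paper simply asserts that ``the base obtained from $\phi$ will have the desired properties.''
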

	
	\begin{proof}
		We may write $\Z\Delta=Q\oplus P$, $Q,P$ are free $\Z$-modules and $\Z\Pi\sub Q$ is of finite index.  Let $\psi:\Z\Pi\to\R$ be such that $\psi(\gamma)=\epsilon>0$ is very small and positive, and $\psi(\gamma')\gg0$ for $\gamma'\in\Pi\setminus\{\gamma\}$.  Define $\phi:\Z\Delta\to\R$ by extending $\psi$ to $Q$ via the injectivity of $\R$, and letting $\phi$ be very large in absolute value on all non-zero projections of elements of $\Delta$ to $P$.  Then the base obtained from $\phi$ will have the desired properties.
	\end{proof}

	The following is entirely analogous to Cor.~4.5 of \cite{S}, and follows from the same proof as Lemma \ref{lemma bases equiv}
	
	\begin{lemma}\label{lemma principal root equiv}
		If $\Sigma,\Sigma'$ are bases with the same principal roots, then they are equivalent.
	\end{lemma}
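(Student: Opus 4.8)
The plan is to run the decreasing-inversion-count argument used for Lemma \ref{lemma bases equiv}, and to observe that under the hypothesis on principal roots every reflection it calls for is a singular one. Concretely, I would induct on the (finite, since $\Delta$ is finite) quantity
\[
N(\Sigma,\Sigma'):=|\Delta_{\Sigma}^+\cap(\Delta\setminus\Delta_{\Sigma'}^+)|.
\]
If $N(\Sigma,\Sigma')=0$ then $\Delta_{\Sigma}^+\subseteq\Delta_{\Sigma'}^+$; as both are positive systems they are complementary halves of $\Delta$, so they coincide, and by the bijectivity of $\Sigma\mapsto\Delta_{\Sigma}^+$ we get $\Sigma=\Sigma'$. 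If $N(\Sigma,\Sigma')>0$, pick $\gamma\in\Delta_{\Sigma}^+$ with $\phi_{\Sigma'}(\gamma)<0$ and write $\gamma=\sum_{\beta\in\Sigma}n_\beta\beta$ with $n_\beta\in\N$; then $\phi_{\Sigma'}(\alpha)<0$ for some $\alpha\in\Sigma$, i.e.\ there is a simple root $\alpha\in\Sigma$ lying in $\Delta\setminus\Delta_{\Sigma'}^+$.

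The one new point compared with Lemma \ref{lemma bases equiv} is that such an $\alpha$ must be \emph{singular}. Suppose it were regular. Then the subalgebra generated by $\g_{\alpha},\g_{-\alpha}$ contains an $\s\l(2)$, whose root vectors are even elements lying in weight spaces $\g_{\pm k\alpha}$ for some positive integer $k$; by Lemma \ref{lemma multiples of roots} we have $k\in\{1,2\}$, and since $(\g_{k\alpha})_{\ol 0}\neq0$ we get $k\alpha\in\Delta_0$. Now $\Sigma$ and $\Sigma'$ having the same principal roots means they cut out the same positive system on $\Delta_0$, i.e.\ $\Delta_0\cap\Delta_{\Sigma}^+=\Delta_0\cap\Delta_{\Sigma'}^+$. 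Since $\alpha\in\Sigma\subseteq\Delta_{\Sigma}^+$ we have $k\alpha\in\Delta_0\cap\Delta_{\Sigma}^+=\Delta_0\cap\Delta_{\Sigma'}^+$, so $\phi_{\Sigma'}(k\alpha)>0$ and hence $\phi_{\Sigma'}(\alpha)>0$, contradicting the choice of $\alpha$. Thus $\alpha$ is singular and $r_{\alpha}$ is a singular reflection.

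Finally, $\Delta_{r_{\alpha}\Sigma}^+=(\Delta_{\Sigma}^+\setminus\{\alpha\})\sqcup\{-\alpha\}$, and since $\alpha\notin\Delta_{\Sigma'}^+$ while $-\alpha\in\Delta_{\Sigma'}^+$ this yields $N(r_{\alpha}\Sigma,\Sigma')=N(\Sigma,\Sigma')-1$. By Lemma \ref{lemma sing refl preserves prin roots}, $r_{\alpha}\Sigma$ has the same principal roots as $\Sigma$, hence as $\Sigma'$, so the inductive hypothesis gives a sequence of singular reflections carrying $r_{\alpha}\Sigma$ to $\Sigma'$; prepending $r_{\alpha}$ gives $\Sigma\sim\Sigma'$. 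The only step needing genuine care is the middle one — ruling out a regular reflection — which is exactly the place where the equality of principal roots is used; the rest is a routine adaptation of the proof of Lemma \ref{lemma bases equiv}.
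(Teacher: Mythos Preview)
Your argument follows exactly the paper's approach: the paper simply says the lemma ``follows from the same proof as Lemma~\ref{lemma bases equiv}'', i.e.\ the inversion-count induction, and you have correctly filled in the one additional step---that any $\alpha\in\Sigma\cap\Delta_{\Sigma'}^-$ is forced to be singular when $\Sigma,\Sigma'$ share principal roots.

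One technical wrinkle worth tightening: the step where you deduce from regularity of $\alpha$ that some $k\alpha\in\Delta_0$ (via the $\s\l(2)$ root vectors lying in $\g_{\pm k\alpha}$) is not quite justified by Definition~\ref{def real root} as literally written. An abstract $\s\l(2)$ inside $\langle\g_\alpha,\g_{-\alpha}\rangle$ need not be $\a$-homogeneous; for instance in the singular case $(\o\s\p(2|2n),\o\s\p(1|2n))$ one finds $\langle\g_\alpha,\g_{-\alpha}\rangle=\o\s\p(2|2n)$, whose even part $\s\o(2)\oplus\s\p(2n)$ contains copies of $\s\l(2)$ sitting entirely in $\a$-weight zero. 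The characterization you actually need---$\alpha$ singular iff neither $\alpha$ nor $2\alpha$ lies in $\Delta_0$---is correct, but is most cleanly read off from the rank-one classification in Lemma~\ref{lemma classification rank one} rather than from Definition~\ref{def real root}.
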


	\begin{cor}\label{cor principal root existence}
		Let $\mathcal{S}$ be an equivalence class of bases with principal roots $\Pi$.  Then for every $\gamma\in\Pi$, there exists some base $\Sigma\in\mathcal{S}$ for which either $\gamma$ or $\gamma/2$ is simple.
	\end{cor}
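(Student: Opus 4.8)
The plan is to deduce this directly from Lemma \ref{lemma simple system existence} and Lemma \ref{lemma principal root equiv}, with a short verification sandwiched in between. Fix $\gamma \in \Pi$. Applying Lemma \ref{lemma simple system existence} to the base $\Pi$ of $\Delta_0$ and the element $\gamma$, we obtain a base $\Sigma'$ of $\Delta$ with $\Pi \subseteq \Delta_{\Sigma'}^+$ and with either $\gamma \in \Sigma'$ or $\gamma/2 \in \Sigma'$. The only thing left to establish is that $\Sigma'$ lies in $\mathcal{S}$.

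First I would check that the principal roots of $\Sigma'$ are exactly $\Pi$. By definition these form the base of the positive system that $\Delta_{\Sigma'}^+$ induces on $\Delta_0$. Since $\Pi$ is a base of $\Delta_0$, every even root lies in $\mathbb{N}\Pi \sqcup (-\mathbb{N}\Pi)$; and because $\Pi \subseteq \Delta_{\Sigma'}^+ = \mathbb{N}\Sigma'$, any even root in $\mathbb{N}\Pi$ lies in $\mathbb{N}\Sigma'$ and is therefore $\Sigma'$-positive, while any even root in $-\mathbb{N}\Pi$ is $\Sigma'$-negative. Hence the positive system induced on $\Delta_0$ is $\Delta_0 \cap \mathbb{N}\Pi$, whose base is $\Pi$, so $\Sigma'$ has principal roots $\Pi$. (The possible non-reducedness of $\Delta_0$, i.e.\ the case where $\gamma$ and $2\gamma$ are both even roots, does not affect this, since positivity of $\Pi$ still determines the sign of every even root.)

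Now the equivalence class $\mathcal{S}$ has a well-defined set of principal roots $\Pi$ by Lemma \ref{lemma sing refl preserves prin roots}, so $\Sigma'$ and any representative of $\mathcal{S}$ are two bases with the same principal roots; by Lemma \ref{lemma principal root equiv} they are equivalent, i.e.\ $\Sigma' \in \mathcal{S}$. Since $\Sigma'$ has either $\gamma$ or $\gamma/2$ as a simple root, this is precisely the assertion. There is no real obstacle here — everything of substance is packaged into the two cited lemmas — and the only point requiring care is the bookkeeping in the middle step, namely that $\Pi \subseteq \Delta_{\Sigma'}^+$ genuinely forces the principal roots of $\Sigma'$ to \emph{equal} $\Pi$, and not merely to contain or be contained in it.
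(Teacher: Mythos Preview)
Your proof is correct and follows exactly the approach of the paper, which simply says ``This follows immediately from Lemmas \ref{lemma simple system existence} and \ref{lemma principal root equiv}.'' You have merely spelled out the implicit verification that $\Pi\subseteq\Delta_{\Sigma'}^+$ forces the principal roots of $\Sigma'$ to equal $\Pi$, which is indeed the only point requiring any checking.
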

	\begin{proof}
		This follows immediately from Lemmas \ref{lemma simple system existence} and \ref{lemma principal root equiv}.
	\end{proof}	

	\section{Spherical weights}
	
	In this section, we introduce spherical weights and study their behavior under singular reflections.  At this point we chose to abuse the classification of supersymmetric pairs for $\g$ indecomposable and Kac-Moody, as avoiding it seems rather difficult and lacking sufficient payoff.
	
	\subsection{$\Sigma$-spherical weights} Let $\Sigma\sub\Delta$ be a choice of simple roots, and consider the parabolic subalgebra:
	\[
	\q_{\Sigma}=\c(\a)\oplus\n_{\Sigma}=\m\oplus\a\oplus\bigoplus\limits_{\alpha\in\Delta_{\Sigma}^+}\g_{\alpha}.
	\]
	\begin{definition}
		Define $P_{\Sigma}^+\sub\a^*$ to be the those weights $\lambda\in\a^*$ for which there exists a highest weight (with respect to $\q_{\Sigma}$), finite-dimensional $\g$-module $V$ of highest weight $\lambda$, such that $(V^*)^{\k}\neq0$.  We call elements $\lambda\in P_{\Sigma}^+$ the \emph{$\Sigma$-spherical weights}.  Note that $P_{\Sigma}^+$ is a submonoid of $\a^*$ by Cor.~5.10 of \cite{Sh5}.
	\end{definition}	

	\subsection{Parabolic Verma module} For $\lambda\in\a^*$, consider the one dimensional, purely even $\c(\a)$-module $\Bbbk_{\lambda}=\Bbbk\langle v_{\lambda}\rangle$ on which $\a$ acts via $\lambda$ and $\m$ acts by 0.  Inflate $\Bbbk_{\lambda}$ to a module over $\q_{\Sigma}$, and set
	\[
	M_{\Sigma}(\lambda):=\Ind_{\q_{\Sigma}}^{\g}\Bbbk_{\lambda}.
	\]
	The following lemma is standard, and follows from Prop.~5.5.4 and Prop.~5.5.8 of \cite{D}.  
	\begin{lemma}
		There exists a one-dimensional space of $\k$-coinvariants on $M_{\Sigma}(\lambda)$.  In particular, there exists a minimal quotient of $M_{\Sigma}(\lambda)$ which continues to admit a nonzero $\k$-coinvariant.
	\end{lemma}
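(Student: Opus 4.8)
The plan is to compute the $\k$-coinvariants $M_{\Sigma}(\lambda)_{\k}:=M_{\Sigma}(\lambda)/\k\cdot M_{\Sigma}(\lambda)$ explicitly by restricting the induced module along $\k$ via a Mackey-type isomorphism, and then to produce the minimal quotient by a Zorn's lemma argument. First I would extract from the Iwasawa decomposition (Lemma~\ref{lemma iwasawa}) the two facts I need about the pair of subalgebras $\k,\q_{\Sigma}\sub\g$: since $\g=\k\oplus\a\oplus\n_{\Sigma}^{+}$ as super vector spaces and $\a\oplus\n_{\Sigma}^{+}\sub\q_{\Sigma}$, we have $\g=\k+\q_{\Sigma}$; and since $\q_{\Sigma}=\m\oplus\a\oplus\n_{\Sigma}^{+}$ while $(\a\oplus\n_{\Sigma}^{+})\cap\k=0$, we have $\k\cap\q_{\Sigma}=\m$. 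Note also that $\a\oplus\n_{\Sigma}^{+}$ is a subalgebra which is simultaneously a complement to $\m$ inside $\q_{\Sigma}$ and a complement to $\k$ inside $\g$.

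Next I would invoke the PBW theorem for Lie superalgebras — the super-analogues of Prop.~5.5.4 and Prop.~5.5.8 of \cite{D} applied to these decompositions — to conclude that the multiplication map $U(\k)\otimes_{U(\m)}U(\q_{\Sigma})\to U(\g)$ is an isomorphism of $(U(\k),U(\q_{\Sigma}))$-bimodules: it is surjective because $\g=\k+\q_{\Sigma}$, and it is an isomorphism on associated graded spaces (both being $S(\g)$ with its standard grading, using the complements from the previous step), hence an isomorphism. Tensoring over $U(\q_{\Sigma})$ with $\Bbbk_{\lambda}$ and using that $\m$ acts by $0$ on $v_{\lambda}$ gives a $\k$-module isomorphism
\[
M_{\Sigma}(\lambda)=U(\g)\otimes_{U(\q_{\Sigma})}\Bbbk_{\lambda}\;\cong\;U(\k)\otimes_{U(\m)}\Bbbk=\Ind_{\m}^{\k}\Bbbk,
\]
the induction of the trivial $\m$-module. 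Taking $\k$-coinvariants is the functor $\Bbbk\otimes_{U(\k)}(-)$, and by associativity of the tensor product $\Bbbk\otimes_{U(\k)}\bigl(U(\k)\otimes_{U(\m)}\Bbbk\bigr)\cong\Bbbk\otimes_{U(\m)}\Bbbk$, which is the space of $\m$-coinvariants of the trivial module, namely $\Bbbk$. Hence $M_{\Sigma}(\lambda)_{\k}$ is one-dimensional.

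For the last sentence, since $M_{\Sigma}(\lambda)_{\k}\neq0$ we have $\k\cdot M_{\Sigma}(\lambda)\subsetneq M_{\Sigma}(\lambda)$; fix $v\notin\k\cdot M_{\Sigma}(\lambda)$, so that $M_{\Sigma}(\lambda)=\Bbbk v+\k\cdot M_{\Sigma}(\lambda)$. I would apply Zorn's lemma to the poset of $\g$-submodules $N\sub M_{\Sigma}(\lambda)$ with $v\notin N+\k\cdot M_{\Sigma}(\lambda)$ (equivalently $N+\k\cdot M_{\Sigma}(\lambda)\neq M_{\Sigma}(\lambda)$): it contains $0$, and the union of a chain again lies in it, because if $v$ belonged to the union plus $\k\cdot M_{\Sigma}(\lambda)$ it would already belong to $N_{i}+\k\cdot M_{\Sigma}(\lambda)$ for some member $N_{i}$ of the chain. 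Let $N$ be a maximal such submodule and set $V=M_{\Sigma}(\lambda)/N$. Then $V_{\k}=M_{\Sigma}(\lambda)/(N+\k\cdot M_{\Sigma}(\lambda))$ is a nonzero quotient of the one-dimensional space $M_{\Sigma}(\lambda)_{\k}$, hence $V_{\k}\cong\Bbbk$; and for any nonzero submodule $N'/N\sub V$, maximality of $N$ forces $N'+\k\cdot M_{\Sigma}(\lambda)=M_{\Sigma}(\lambda)$, so that $\bigl(V/(N'/N)\bigr)_{\k}=0$. Thus $V$ is a minimal quotient of $M_{\Sigma}(\lambda)$ admitting a nonzero $\k$-coinvariant.

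The only real content is the bimodule isomorphism in the second step — equivalently, the statement that $U(\g)$ is free over $U(\q_{\Sigma})$ in a way compatible with the left $U(\k)$-action. In the super setting one must check that the sign conventions in PBW cause no harm, but since the complement $\a\oplus\n_{\Sigma}^{+}$ and the subalgebra $\m$ interact exactly as in the purely even case, this is routine — which is precisely why the lemma can be attributed to \cite{D}. Everything else is formal.
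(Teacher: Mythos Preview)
Your argument is correct and is precisely the one the paper has in mind: the paper does not write out a proof but simply cites Prop.~5.5.4 and Prop.~5.5.8 of \cite{D}, which are exactly the Mackey-type PBW isomorphisms you invoke to identify $M_{\Sigma}(\lambda)|_{\k}\cong\Ind_{\m}^{\k}\Bbbk$ and hence compute the coinvariants. One small remark: the Zorn step is unnecessary and slightly obscures the picture. Since $M_{\Sigma}(\lambda)_{\k}$ is one-dimensional, a submodule $N$ satisfies $(M_{\Sigma}(\lambda)/N)_{\k}\neq0$ if and only if $N\sub\k\cdot M_{\Sigma}(\lambda)$; the sum of all such $N$ is again contained in $\k\cdot M_{\Sigma}(\lambda)$, so there is a \emph{unique} maximal such submodule and hence a unique minimal quotient---which is what the paper needs immediately afterward when it defines $V_{\Sigma}(\lambda)$ as \emph{the} minimal quotient.
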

	
	\begin{definition}
		We write $V_{\Sigma}(\lambda)$ for the minimal quotient of $M_{\Sigma}(\lambda)$ which continues to admit a nonzero $\k$-coinvariant.
	\end{definition}

\noindent\textbf{Caution:} $V_{\Sigma}(\lambda)$ need not be irreducible! 	
	
	\begin{lemma}\label{lemma iwasawa parabolic verma}
		We have $\UU\k\cdot v_{\lambda}=M_{\Sigma}(\lambda)$, so that:
		\[
		M_{\Sigma}(\lambda)=\Bbbk\langle v_{\lambda}\rangle\oplus \k\UU\k\cdot v_{\lambda}.
		\]
		In particular, if $\varphi:M_{\Sigma}(\lambda)\to\Bbbk$ is a nontrivial $\k$-coinvariant, we have $\varphi(v_{\lambda})\neq0$, and $\varphi(v)=0$ if and only if $v\in \k\UU\k v_{\lambda}$.
	\end{lemma}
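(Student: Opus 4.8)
The plan is to exploit the Iwasawa decomposition $\g = \k \oplus \a \oplus \n_\Sigma^+$ from Lemma \ref{lemma iwasawa} together with the PBW theorem for the induced module $M_\Sigma(\lambda) = \Ind_{\q_\Sigma}^\g \Bbbk_\lambda$. First I would observe that, as a $\Bbbk$-vector space, $M_\Sigma(\lambda) = \UU(\n_\Sigma^-) \otimes \Bbbk\langle v_\lambda\rangle$, where $\n_\Sigma^- = \bigoplus_{\alpha \in \Delta_\Sigma^+} \g_{-\alpha}$; this is the standard PBW description of a parabolically induced module. The key point is then to compare $\UU(\n_\Sigma^-) v_\lambda$ with $\UU\k \cdot v_\lambda$: since $\g = \k \oplus \q_\Sigma$ (because $\a \oplus \n_\Sigma^+ \subseteq \q_\Sigma$ and $\g = \k \oplus \a \oplus \n_\Sigma^+$), one can use the "opposite" PBW decomposition to write $\UU\g = \UU\k \cdot \UU\q_\Sigma$, and hence $\UU\g \cdot v_\lambda = \UU\k \cdot \UU\q_\Sigma \cdot v_\lambda = \UU\k \cdot v_\lambda$, using that $\q_\Sigma$ acts on $v_\lambda$ through the character $\lambda$ (so $\UU\q_\Sigma \cdot v_\lambda = \Bbbk\langle v_\lambda\rangle$). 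Since $M_\Sigma(\lambda) = \UU\g \cdot v_\lambda$ by definition of induction, this gives $\UU\k \cdot v_\lambda = M_\Sigma(\lambda)$.

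For the refined decomposition, I would split $\UU\k = \Bbbk \oplus \k\UU\k$ (the augmentation ideal decomposition, valid for the universal enveloping algebra of any Lie superalgebra: $\UU\k = \Bbbk\,1 \oplus \k\,\UU\k$ as left $\k$-modules, by PBW). Applying this to $v_\lambda$ yields $M_\Sigma(\lambda) = \UU\k \cdot v_\lambda = \Bbbk\langle v_\lambda\rangle + \k\UU\k \cdot v_\lambda$. To see the sum is direct, I would use a weight/filtration argument: $\k\UU\k \cdot v_\lambda$ is spanned by vectors $X u v_\lambda$ with $X \in \k$, and one checks that projecting $M_\Sigma(\lambda)$ onto its "lowest" PBW component $\Bbbk\langle v_\lambda\rangle$ (the one with no $\n_\Sigma^-$ factor) kills $\k\UU\k \cdot v_\lambda$ — more precisely, after writing everything in the $\UU(\n_\Sigma^-)$-basis, the coefficient of $v_\lambda$ in any element of $\k\UU\k\cdot v_\lambda$ vanishes, because each $X\in\k$ written via $\k = $ (the appropriate "diagonal" complement) contributes a strictly negative shift unless... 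Actually the cleanest route: $\varphi_0 : M_\Sigma(\lambda) \to \Bbbk$, the projection onto $\Bbbk\langle v_\lambda\rangle$ along $\n_\Sigma^-\UU(\n_\Sigma^-)v_\lambda$, restricted to $\UU\k\cdot v_\lambda$, is precisely the counit composed with the identification $\UU\k/(\text{stuff}) \cong \Bbbk$; concretely $\varphi_0$ is a $\k$-coinvariant (this needs a short check that $\varphi_0(\k M_\Sigma(\lambda)) = 0$, which follows since $\k$ acts by raising the $\n_\Sigma^-$-degree is false — rather, one uses that $\varphi_0 = $ the unique $\k$-coinvariant up to scalar whose existence is the previous lemma, and $\varphi_0(v_\lambda) \neq 0$), and $\varphi_0(\k\UU\k\cdot v_\lambda) = \varphi_0(\k \cdot M_\Sigma(\lambda)) = 0$. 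Hence $\k\UU\k\cdot v_\lambda \subseteq \ker\varphi_0$, while $v_\lambda \notin \ker\varphi_0$, giving the direct sum decomposition $M_\Sigma(\lambda) = \Bbbk\langle v_\lambda\rangle \oplus \k\UU\k\cdot v_\lambda$.

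The final statement about an arbitrary $\k$-coinvariant $\varphi$ follows immediately: by the previous lemma the space of $\k$-coinvariants is one-dimensional, so $\varphi$ is a nonzero scalar multiple of $\varphi_0$; then $\varphi(v_\lambda) \neq 0$ and $\ker\varphi = \ker\varphi_0 = \k\UU\k\cdot v_\lambda$, using that $\ker\varphi$ is a codimension-one subspace containing $\k\UU\k\cdot v_\lambda$ and not containing $v_\lambda$, and $\k\UU\k\cdot v_\lambda$ is itself codimension one by the direct sum decomposition. I expect the main obstacle to be pinning down cleanly that $\varphi_0$ (equivalently, that the PBW projection onto the top $\Bbbk\langle v_\lambda\rangle$-component) genuinely annihilates $\k\cdot M_\Sigma(\lambda)$: this requires being careful about which complement to $\q_\Sigma$ one uses inside $\g$ — it must be $\k$ itself, and the fact that $\k \cap \q_\Sigma = \m$ (Assumption $(\star)$) is what makes the degree bookkeeping work, since elements of $\k$ decompose along $\n_\Sigma^+ \oplus \a \oplus \n_\Sigma^-$ with a genuinely negative $\n_\Sigma^-$-part whenever they are not in $\m$. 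Everything else is routine PBW manipulation.
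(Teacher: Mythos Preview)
Your approach is correct and is exactly the paper's: both reduce everything to the Iwasawa decomposition $\g=\k\oplus\a\oplus\n_{\Sigma}^+$ together with $\a\oplus\n_{\Sigma}^+\sub\q_{\Sigma}$, so that PBW gives $\UU\g\cdot v_\lambda=\UU\k\cdot\UU(\a\oplus\n_\Sigma^+)\cdot v_\lambda=\UU\k\cdot v_\lambda$, and then the augmentation splitting $\UU\k=\Bbbk\oplus\k\UU\k$ finishes. Two small clean-ups: the equality $\g=\k\oplus\q_\Sigma$ is false as written (the intersection is $\m$, as you yourself note later), so write $\g=\k+\q_\Sigma$ or, better, work directly with the genuine complement $\a\oplus\n_\Sigma^+$; and for directness you can skip the attempt to identify $\varphi_0$ with the PBW projection and go straight to the argument you give at the end---take any nonzero $\k$-coinvariant $\varphi$ (it exists by the preceding lemma), observe $\varphi$ kills $\k\UU\k\cdot v_\lambda=\k\cdot M_\Sigma(\lambda)$, and conclude $\varphi(v_\lambda)\neq0$ since otherwise $\varphi$ would vanish on all of $\Bbbk v_\lambda+\k\UU\k\cdot v_\lambda=M_\Sigma(\lambda)$.
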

	\begin{proof}
		This follows from the Iwasawa decomposition $\g=\k\oplus\a\oplus\n_{\Sigma}$, and the fact that $\a\oplus\n_{\Sigma}\sub\q_{\Sigma}$.
	\end{proof}
	
	\begin{lemma}\label{lemma restriction Verma simple root}
		Let $\lambda\in\a^*$ and $\alpha\in\Sigma$.  Then the $\g\langle\alpha\rangle$-submodule of $M_{\Sigma}(\lambda)$ generated by $v_{\lambda}$ is isomorphic to $M_{\{\alpha\}}(\lambda)$. Further a non-zero $\k$-coinvariant on $M_{\Sigma}(\lambda)$ restricts to a non-zero $\k\langle\alpha\rangle$-coinvariant on $M_{\{\alpha\}}(\lambda)$.
	\end{lemma}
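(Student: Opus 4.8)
The plan is to analyze the PBW filtration on $M_\Sigma(\lambda) = \Ind_{\q_\Sigma}^\g \Bbbk_\lambda$ using the base $\Sigma = \{\alpha_1,\dots,\alpha_k\}$ with, say, $\alpha = \alpha_1$. By PBW, $M_\Sigma(\lambda) \cong \UU(\n_\Sigma^-) \otimes \Bbbk_\lambda$ as a vector space, where $\n_\Sigma^- = \bigoplus_{\beta \in \Delta_\Sigma^+} \g_{-\beta}$. The key structural input is Remark \ref{remark presentation} together with Lemma \ref{lemma n gend by}: the relations $[\g_{\alpha_i}, \g_{-\alpha_j}] = 0$ for $i \neq j$ mean that $\g\langle\alpha\rangle = \c(\a) \oplus \bigoplus_{n \in \Q_{\neq 0}} \g_{n\alpha}$ is indeed a subalgebra (it is $\theta$-stable by construction), and that the negative part $\n^-\langle\alpha\rangle := \g_{-\alpha} \oplus \g_{-2\alpha}$ (if $2\alpha \in \Delta$) sits inside $\n_\Sigma^-$ as the part "supported only on $\alpha$". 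Concretely, order the PBW monomials so that $\UU(\n_\Sigma^-) = \UU(\n^-\langle\alpha\rangle) \cdot \UU(\widetilde\n^-)$ where $\widetilde\n^-$ is spanned by root spaces $\g_{-\beta}$ with $\beta \in \Delta_\Sigma^+$ not a multiple of $\alpha$; then $\UU(\n^-\langle\alpha\rangle) \cdot v_\lambda$ is the candidate submodule.

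First I would show $\g\langle\alpha\rangle \cdot v_\lambda = \UU(\n^-\langle\alpha\rangle)\cdot v_\lambda$ is a $\g\langle\alpha\rangle$-submodule: this is immediate since $\c(\a)$ acts on $v_\lambda$ by the character $\lambda$ (scaling), $\g_{-\alpha}, \g_{-2\alpha}$ raise us within $\UU(\n^-\langle\alpha\rangle)$, and $\g_\alpha, \g_{2\alpha}$ map back into it — the latter because brackets $[\g_\alpha, \g_{-\alpha}] \subseteq \c(\a)$ and $[\g_\alpha, \g_{-2\alpha}] \subseteq \g_{-\alpha}$ stay inside $\g\langle\alpha\rangle$, using that $\g\langle\alpha\rangle$ is a subalgebra. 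Next I would identify this submodule with $M_{\{\alpha\}}(\lambda) = \Ind_{\q_{\{\alpha\}}}^{\g\langle\alpha\rangle} \Bbbk_\lambda$, where $\q_{\{\alpha\}} = \c(\a) \oplus \g_\alpha \oplus \g_{2\alpha}$: there is an obvious $\g\langle\alpha\rangle$-map $M_{\{\alpha\}}(\lambda) \to M_\Sigma(\lambda)$ sending the cyclic vector to $v_\lambda$ (well-defined since $v_\lambda$ is a $\q_{\{\alpha\}}$-eigenvector of the right character in $M_\Sigma(\lambda)$), and PBW for both sides shows it is injective with image exactly $\UU(\n^-\langle\alpha\rangle) v_\lambda$.

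For the second assertion, let $\varphi: M_\Sigma(\lambda) \to \Bbbk$ be a nonzero $\k$-coinvariant; restrict it to the submodule $M_{\{\alpha\}}(\lambda)$. By Lemma \ref{lemma iwasawa parabolic verma}, $\varphi(v_\lambda) \neq 0$, so the restriction is nonzero on the cyclic vector. It remains to check the restriction is $\k\langle\alpha\rangle$-invariant, i.e. $\varphi(x \cdot w) = 0$ for $x \in \k\langle\alpha\rangle$ and $w \in M_{\{\alpha\}}(\lambda)$; but $\k\langle\alpha\rangle = \g\langle\alpha\rangle \cap \k \subseteq \k$, so this is just a special case of $\varphi$ being a $\k$-coinvariant on $M_\Sigma(\lambda)$. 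Hence the restriction is a nonzero $\k\langle\alpha\rangle$-coinvariant on $M_{\{\alpha\}}(\lambda)$. The main obstacle — and the step deserving the most care — is the PBW bookkeeping establishing that $\UU(\n^-\langle\alpha\rangle) v_\lambda$ is genuinely closed under the $\g\langle\alpha\rangle$-action and is a free $\UU(\n^-\langle\alpha\rangle)$-module of rank one (equivalently that the induced map from $M_{\{\alpha\}}(\lambda)$ is injective); this relies essentially on the vanishing relations $[\g_{\alpha_i},\g_{-\alpha_j}]=0$ from Remark \ref{remark presentation} to guarantee that acting by $\g_\alpha$ on a monomial involving the "other" root vectors $\g_{-\beta}$ does not produce $\g_{-\alpha}$-terms, so that no mixing occurs.
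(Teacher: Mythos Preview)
Your proposal is correct and matches the paper's approach: PBW for the first claim, and Lemma~\ref{lemma iwasawa parabolic verma} (that $\varphi(v_\lambda)\neq0$) together with $\k\langle\alpha\rangle\subseteq\k$ for the second. Your closing worry about the relations $[\g_{\alpha_i},\g_{-\alpha_j}]=0$ is unnecessary here: once the map $M_{\{\alpha\}}(\lambda)\to M_\Sigma(\lambda)$ is defined via the universal property of induction (sending cyclic vector to $v_\lambda$), its image is automatically $\g\langle\alpha\rangle$-stable, and injectivity follows from PBW for the Lie subalgebra inclusion $\n^-\langle\alpha\rangle\subseteq\n_\Sigma^-$ alone.
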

	
	\begin{proof}
		The first claim is by the PBW theorem.  For the second claim, write $\varphi$ for a non-zero coinvariant on $M_{\Sigma}(\lambda)$.  Lemma \ref{lemma iwasawa parabolic verma} tells us that $\varphi(v_{\lambda})\neq0$.  Since $v_{\lambda}\in \UU\g\langle\alpha\rangle v_{\lambda}$, it is clear that $\varphi$ restricts to a nonzero coinvariant on this submodule.
	\end{proof}

	\begin{remark}
	Let $\mathcal{G}$ be a quasireductive supergroup which is a global form of $\g$ and is such that $\theta$ lifts to an involution of $\mathcal{G}$.  Let $\KK\sub\mathcal{G}$ be a subgroup satisfying $(\mathcal{G}^\theta)^\circ\sub\KK\sub\mathcal{G}^\theta$, where $(\mathcal{G}^\theta)^\circ$ denotes the connected component of the identity of $\mathcal{G}^\theta$.  Notice that $\KK$ will also be quasireductive.	Then we call the coset space $\mathcal{G}/\KK$ a supersymmetric space (see \cite{MT} for the construction of homogeneous superspaces). 
	
	Given an Iwasawa Borel subalgebra $\b$, that is, a Borel subalgebra containing $\a\oplus\n$, we may consider the $\b$-eigenfunctions in $\Bbbk[\mathcal{G}/\KK]$.  This set will exactly be those $\lambda\in P_{\Sigma}^+$ for which $V_{\Sigma}(\lambda)$ integrates to a representation of the group $\mathcal{G}$ and for which $\KK_0$ acts trivially on the $\k$-coinvariant of $V_{\Sigma}(\lambda)$.  Further, for exactly such $\lambda$ we will have an embedding $V_{\Sigma}(\lambda)\sub\Bbbk[\mathcal{G}/\KK]$.
	\end{remark}


%
%
%
\subsection{Integrability}

	\begin{definition}
		For $\alpha\in\Delta$, we say that a $\g$-module $V$ is $\alpha$-integrable if both $\g_{\alpha}$ and $\g_{-\alpha}$ act locally nilpotently on $V$, i.e.~if for each $v\in V$ there exists $N>0$ such that $(\g_{\alpha})^Nv=0$ and $(\g_{-\alpha})^Nv=0$.
	\end{definition}

	\begin{lemma}\label{lemma integrability}
		For $\lambda\in\a^*$, write $v_{\lambda}'$ for a highest weight vector of $V_{\Sigma}(\lambda)$.  Then the following are equivalent:
		\begin{enumerate}
			\item $\lambda\in P_{\Sigma}^+$;
			\item $V_{\Sigma}(\lambda)$ is finite-dimensional;
			\item for every regular root $\alpha\in\Delta$, there exists an $N>0$ such that $(\g_{\alpha})^Nv_{\lambda}'=0$;
			\item for every $\alpha\in\Pi$, there exists an $N>0$ such that $(\g_{-\alpha})^Nv_{\lambda}'=0$.
		\end{enumerate}     
	\end{lemma}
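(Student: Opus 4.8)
The plan is to establish the cycle of implications $(2)\Rightarrow(1)\Rightarrow(3)\Rightarrow(4)\Rightarrow(2)$, with the genuine content concentrated in the last two implications; the first two are essentially formal. For $(2)\Rightarrow(1)$: by construction $V_\Sigma(\lambda)$ is a highest weight module with respect to $\q_\Sigma$ of highest weight $\lambda$ and it admits a nonzero $\k$-coinvariant, so if it is finite-dimensional then $\lambda\in P_\Sigma^+$ by the very definition of $P_\Sigma^+$. For $(1)\Rightarrow(3)$: if $\lambda\in P_\Sigma^+$, there is some finite-dimensional highest weight module $W$ of highest weight $\lambda$ with $(W^*)^\k\neq 0$; by the minimality in the definition of $V_\Sigma(\lambda)$, $W$ is a quotient of $M_\Sigma(\lambda)$ through which the coinvariant factors, hence $V_\Sigma(\lambda)$ is a quotient of $W$ (or one argues directly that $V_\Sigma(\lambda)$, being the minimal such quotient, is itself finite-dimensional since $W$ is). Being finite-dimensional, the highest weight vector $v_\lambda'$ is killed by a power of $\g_\alpha$ for every root, in particular every regular root.

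For $(3)\Rightarrow(4)$: here I would use the rank-one reduction together with the $\s\l(2)$-theory. Given a principal root $\gamma\in\Pi$, Corollary \ref{cor principal root existence} produces a base $\Sigma'$ in the same equivalence class as $\Sigma$ for which either $\gamma$ or $\gamma/2$ is a simple (regular, since it is even) root; since singular reflections preserve the coinvariant and the isomorphism type of the $V$'s up to the reflection formulas, and in particular preserve finite-dimensionality, I can pass to $\Sigma'$ and assume $\gamma$ (or $\gamma/2$) is simple. Then Lemma \ref{lemma restriction Verma simple root} identifies the $\g\langle\gamma\rangle$-submodule generated by the highest weight vector with a rank-one parabolic Verma module carrying a nonzero coinvariant; restricting to the regular root $\gamma$, local nilpotence of $\g_\gamma$ on $v_\lambda'$ from $(3)$ forces, via the $\s\l(2)$ (or $\s\l(2)$ coming from $\g_{2\gamma},\g_{-2\gamma}$, cf.\ Lemma \ref{lemma multiples of roots}) representation theory applied inside this submodule, that $\g_{-\gamma}$ also acts locally nilpotently on $v_\lambda'$ — this is the standard fact that a vector generating a locally $\g_\gamma$-nilpotent submodule on which the coroot acts by an integer must be $\g_{-\gamma}$-nilpotent. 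The subtlety I expect to be the main obstacle is bookkeeping the reflection: one must check that local nilpotence of $\g_{-\gamma}$ on the highest weight vector is preserved (or suitably transported) under the singular reflections connecting $\Sigma$ and $\Sigma'$, which is where Lemma \ref{lemma sing refl preserves prin roots} and the explicit reflection formulas for $V_\Sigma(\lambda)$ (singular isotropic and non-isotropic cases) must be invoked carefully.

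For $(4)\Rightarrow(2)$: this is the analogue of the classical integrability criterion for Kac-Moody algebras. Using Corollary \ref{lemma even part gend}, $\g_{\ol 0}$ is generated by $\m_{\ol 0}$, $\a$, and the even parts of $\g_{\pm\alpha}$ for $\alpha\in\Pi$; condition $(4)$ says $v_\lambda'$ is $\g_{-\alpha}$-nilpotent for each principal $\alpha$, and it is automatically $\g_{\alpha}$-nilpotent for $\alpha\in\Delta_\Sigma^+\supseteq\Pi$ since it is a $\q_\Sigma$-highest weight vector. Hence for each $\alpha\in\Pi$ the $\s\l(2)$-triple (or the one from $\g_{2\alpha}$) attached to $\alpha$ acts locally finitely on $v_\lambda'$, so the $\g_{\ol 0}$-submodule generated by $v_\lambda'$ is a highest weight module integrable for all simple coroots, hence finite-dimensional by the classical theorem. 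Finally $V_\Sigma(\lambda)=\UU\g\cdot v_\lambda' = \UU\n_\Sigma^-\cdot v_\lambda'$ is, as an $\m\oplus\a$-module, a quotient of $\UU\n_\Sigma^-\otimes\Bbbk_\lambda$; since $\n_\Sigma^-$ is finite-dimensional and the even submodule generated by $v_\lambda'$ is finite-dimensional, applying the odd root vectors (finitely many, each squaring into the even part) produces only finitely many new weight spaces, so $V_\Sigma(\lambda)$ is finite-dimensional. I would present this last step by noting that finite-dimensionality over $\g_{\ol 0}$ plus the fact that $\g$ is a finite-dimensional superalgebra over $\g_{\ol 0}$ immediately gives finite-dimensionality over $\g$.
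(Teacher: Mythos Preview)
Your cycle $(2)\Rightarrow(1)\Rightarrow(3)$ and $(4)\Rightarrow(2)$ is fine and essentially matches the paper. The problem is $(3)\Rightarrow(4)$: you have made this step far harder than it is, and the route you take does not close. Every principal root $\gamma\in\Pi$ lies in $\Delta_0$, so $(\g_\gamma)_{\ol 0}\neq 0$ and the subalgebra generated by $\g_{\pm\gamma}$ contains an $\s\l(2)$ coming from the classical pair $(\g_{\ol 0},\k_{\ol 0})$; hence $\gamma$, and therefore $-\gamma$, is a regular root in the sense of Definition~\ref{def real root}. Condition (3) applied to the regular root $-\gamma$ gives $(\g_{-\gamma})^N v_\lambda'=0$ immediately --- that is the entire content of $(3)\Rightarrow(4)$. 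Your detour through singular reflections and Corollary~\ref{cor principal root existence} is unnecessary, and worse, it forward-references the reflection formulas (Lemmas~\ref{lemma iso refl} and~\ref{lemma noniso noncrit reflection}) proved \emph{after} this lemma; at $\alpha$-critical weights those lemmas do not even furnish an isomorphism $V_\Sigma(\lambda)\cong V_{r_\alpha\Sigma}(\mu)$, so the ``bookkeeping'' you flag as the main obstacle is a genuine gap in that approach, not merely a technicality.

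For $(4)\Rightarrow(2)$ the paper's proof is the standard one-liner: $\UU\g$ is $\alpha$-integrable under the adjoint action, so local nilpotence of $\g_{\pm\gamma}$ on $v_\lambda'$ (from (4) together with $\Pi\subset\Delta_\Sigma^+$) propagates to all of $V_\Sigma(\lambda)$; Corollary~\ref{lemma even part gend} then yields $\g_{\ol 0}$-integrability of the whole module, and finite-dimensionality follows since the weight set is bounded above by $\lambda$. Your PBW variant --- write $V_\Sigma(\lambda)=\Lambda\g_{\ol 1}\cdot\UU\g_{\ol 0}\, v_\lambda'$, observe the first factor is finite-dimensional and the second is a highest weight $\g_{\ol 0}$-module integrable at each $\gamma\in\Pi$, hence finite-dimensional by the classical criterion --- is a valid alternative packaging of the same idea, since the classical criterion itself rests on adjoint integrability of $\UU\g_{\ol 0}$.
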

	
	\begin{proof}
			Here we use that $\UU\g$ is a $\alpha$-integrable for all $\alpha\in\Delta$ under the adjoint action.  For the last equivalence, we use Corollary \ref{lemma even part gend}.
	\end{proof}

\subsection{Detecting singular subspaces}\label{remark singular subspaces} 

\begin{lemma}\label{lemma detecting sing subspaces}
	Suppose that $\lambda\in\a^*$ and $\alpha\in\Sigma$, and write $\varphi$ for a nonzero $\k$-coinvariant on $M_{\Sigma}(\lambda)$.  If  $W\sub\UU\g\langle\alpha\rangle\cdot v_{\lambda}$ is a $\g\langle\alpha\rangle$ submodule of $M_{\Sigma}(\lambda)$ such that $\varphi(W)=0$, then the natural map $M_{\Sigma}(\lambda)\to V_{\Sigma}(\lambda)$ factors over $\UU\g\cdot W$.
\end{lemma}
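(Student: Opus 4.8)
The plan is to show that the coinvariant $\varphi$ vanishes on the submodule $\UU\g\cdot W$ of $M_\Sigma(\lambda)$ generated by $W$. Granting this, $\varphi$ descends to $M_\Sigma(\lambda)/\UU\g\cdot W$, so this quotient still carries a nonzero $\k$-coinvariant; since $V_\Sigma(\lambda)$ is by definition the \emph{minimal} quotient of $M_\Sigma(\lambda)$ with that property, the surjection $M_\Sigma(\lambda)\to V_\Sigma(\lambda)$ must factor through $M_\Sigma(\lambda)\to M_\Sigma(\lambda)/\UU\g\cdot W$, which is exactly the assertion. By Lemma \ref{lemma iwasawa parabolic verma} we have $\ker\varphi=\k\UU\k\cdot v_\lambda=\k\cdot M_\Sigma(\lambda)$, and this subspace is $\k$-stable (for $x,y\in\k$ and $m\in M_\Sigma(\lambda)$ one has $x\cdot(ym)=[x,y]\,m\pm y(xm)\in\k\cdot M_\Sigma(\lambda)$). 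So it suffices to prove $\UU\g\cdot W\subseteq\k\cdot M_\Sigma(\lambda)$, and since $W\subseteq\ker\varphi=\k\cdot M_\Sigma(\lambda)$ by hypothesis, it is enough to reduce the action of $\UU\g$ on $W$ to that of $\UU\k$.

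The first reduction uses the Iwasawa decomposition $\g=\k\oplus\a\oplus\n_\Sigma^+$ (Lemma \ref{lemma iwasawa}). Writing $\b=\a\oplus\n_\Sigma^+$, the PBW theorem gives $\UU\g=\UU\k\cdot\UU\b$, hence $\UU\g\cdot W=\UU\k\cdot(\UU\b\cdot W)$; because $\k\cdot M_\Sigma(\lambda)$ is $\k$-stable it therefore suffices to show $\UU\b\cdot W\subseteq W$. As $W$ is $\c(\a)$-stable (being a $\g\langle\alpha\rangle$-submodule and $\c(\a)\subseteq\g\langle\alpha\rangle$) we have $\UU(\a)\cdot W\subseteq W$, so by PBW once more it is enough to prove $\UU(\n_\Sigma^+)\cdot W\subseteq W$.

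The core of the argument is a weight computation. By Lemma \ref{lemma restriction Verma simple root}, $W\subseteq\UU\g\langle\alpha\rangle v_\lambda=\UU(\g_{-\alpha}\oplus\g_{-2\alpha})v_\lambda$, so every $\a$-weight of $W$ lies in $\lambda-\N\alpha$; on the other hand every $\a$-weight of $M_\Sigma(\lambda)=\UU(\n_\Sigma^-)v_\lambda$ lies in $\lambda-\N\Sigma$. Now let $\gamma\in\Delta_\Sigma^+$ not be a multiple of $\alpha$, and write $\gamma=\sum_{\delta\in\Sigma}c_\delta\delta$ with some $c_{\delta_0}>0$ for $\delta_0\neq\alpha$. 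If $x\in\g_\gamma$ and $w\in W$ is a weight vector of weight $\lambda-k\alpha$, then $xw$ has weight $\lambda-k\alpha+\gamma$, whose (unique) expansion in the base $\Sigma$ has coefficient $-c_{\delta_0}<0$ on $\delta_0$; such a weight does not occur in $M_\Sigma(\lambda)$, forcing $xw=0$. Thus $\g_\gamma\cdot W=0$ for all $\gamma\in\Delta_\Sigma^+\setminus\N\alpha$, while $\g_\gamma\cdot W\subseteq W$ whenever $\gamma\in\N\alpha$ since then $\g_\gamma\subseteq\g\langle\alpha\rangle$. Splitting $\n_\Sigma^+=\bigl(\bigoplus_{n\geq1}\g_{n\alpha}\bigr)\oplus\n''$ with $\n''=\bigoplus_{\gamma\in\Delta_\Sigma^+\setminus\N\alpha}\g_\gamma$, one checks that $\n''$ is an ideal of $\n_\Sigma^+$ (adding a positive root only increases the $\delta_0$-coefficient), so by PBW $\UU(\n_\Sigma^+)\cdot W=\UU\bigl(\bigoplus_{n\geq1}\g_{n\alpha}\bigr)\cdot\UU(\n'')\cdot W=\UU\bigl(\bigoplus_{n\geq1}\g_{n\alpha}\bigr)\cdot W\subseteq W$, using that $\n''\cdot W=0$ makes $\UU(\n'')\cdot W=W$ and that $\bigoplus_{n\geq1}\g_{n\alpha}\subseteq\g\langle\alpha\rangle$. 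This closes the chain of reductions.

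I expect the only delicate point to be making the weight argument airtight: one must note that $W$ genuinely decomposes as a sum of $\a$-weight spaces (it does, being $\c(\a)$-stable), and that $M_\Sigma(\lambda)$ carries no $\a$-weight outside $\lambda-\N\Sigma$ (immediate from $M_\Sigma(\lambda)=\UU(\n_\Sigma^-)v_\lambda$ together with the linear independence of $\Sigma$). Everything else is routine bookkeeping with the Iwasawa decomposition and PBW. A variant avoiding weights — proving $\g_\delta\cdot W=0$ for simple $\delta\neq\alpha$ directly from $[\g_\delta,\g_{-\alpha}]=[\g_\delta,\g_{-2\alpha}]=0$ (Corollary \ref{cor base sufficient}) and then inducting on the height of $\gamma$ — also works but is more cumbersome.
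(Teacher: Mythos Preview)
Your proof is correct and follows the same overall strategy as the paper's: show that $W$ is stable under $\q_{\Sigma}=\c(\a)\oplus\n_{\Sigma}^+$, deduce from Iwasawa that $\UU\g\cdot W=\UU\k\cdot W$, and conclude that $\varphi$ vanishes on $\UU\g\cdot W$. The only real difference is in how $\n_{\Sigma}^+$-stability of $W$ is established. You run a weight argument (weights of $W$ lie in $\lambda-\N\alpha$, so raising by any $\gamma\in\Delta_{\Sigma}^+\setminus\N\alpha$ lands outside the weight support of $M_{\Sigma}(\lambda)$), whereas the paper argues directly from the relations $[\g_{\beta},\g_{-\alpha}]=0$ for distinct simple $\alpha,\beta\in\Sigma$ (Remark~\ref{remark presentation}) that each $\g_{\beta}$ with $\beta\neq\alpha$ annihilates $\UU\g\langle\alpha\rangle\cdot v_{\lambda}\supseteq W$---precisely the ``variant'' you sketch in your final sentence. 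The paper's version dispatches this in one line; your weight argument is slightly longer but perhaps more transparent, and has the virtue of making the ideal decomposition $\n_{\Sigma}^+=(\bigoplus_{n\geq1}\g_{n\alpha})\oplus\n''$ explicit.
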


\begin{proof}
	Recall from Remark \ref{remark presentation} that for two distinct simple roots $\alpha,\beta\in\Sigma$, we have
	\[
	[\g_{-\alpha},\g_{\beta}]=0.
	\]
	Therefore since $W$ is $\g_{\alpha}$-stable it will also be $\n_{\Sigma}$-stable, meaning that $\q_{\Sigma}W\sub W$.  By the Iwasawa decomposition we deduce that $\UU\g\cdot W=\UU\k\cdot W$, which implies that $\varphi$ vanishes on the $\g$-submodule generated by $W$, implying the result.  
\end{proof}

\subsection{Classification of rank one supersymmetric pairs}\label{section classification}
	For the following classification result, we refer to the classification of supersymmetric pairs, which is described in Sec.~5.2 of \cite{Sh}, and is based off \cite{S3}.

	\begin{lemma}\label{lemma classification rank one}
	Suppose that $(\g,\k)$ is rank one, so that $\Sigma=\{\alpha\}$.  Then up to split factors fixed by $\theta$, we may write $\g=\g'\times \a'$, where $\a'$ denotes a complimentary abelian subalgebra on which $\theta$ acts by $(-1)$, and $(\g',\k)$ is one of the following:  
	\begin{enumerate}
		\item $\alpha$ is singular:
		\begin{enumerate}
			\item[(i)]  $(\alpha,\alpha)\neq0$, $m_{\alpha}=(0|2n)$ for $n\geq 1$, and $m_{2\alpha}=(0|0)$
			\[
			(\o\s\p(2|2n),\o\s\p(1|2n))
			\]
			\item[(ii)] $(\alpha,\alpha)=0$, $m_{\alpha}=(0|2)$, and $m_{2\alpha}=(0|0)$
			\[
			(\g\l(1|1)\times\g\l(1|1),\g\l(1|1))	
			\]
		\end{enumerate}
	
		\item $\alpha$ is regular: 
		\begin{enumerate}
		\item[(iii)] $m_{\alpha}=(m-2|2n)$ for $m\geq 3$, $n\geq 1$, and $m_{2\alpha}=(0|0)$
		\[
		(\o\s\p(m|2n),\o\s\p(m-1|2n))
		\]
		\item[(iv)] $m_{\alpha}=(4(n-2)|2m)$ for $m\geq 1$, $n\geq 2$, and $m_{2\alpha}=(3|0)$, for $n\geq 2$
		\[
		(\o\s\p(m|2n),\o\s\p(m|2n-2)\times\s\p(2))
		\]
		\item[(v)] $m_{\alpha}=(2(m-2)|2n)$ for $m\geq 2$, $n\geq 1$, and $m_{2\alpha}=(1|0)$
		\[
		(\g\l(m|n),\g\l(m-1|n)\times\g\l(1))
		\]
		\item[(vi)] $m_{\alpha}=(2|0)$ and $m_{2\alpha}=(0|0)$
		\[
		(\s\l(2)\times\s\l(2),\s\l(2))
		\]
		\item[(vii)] $m_{\alpha}=(0|2)$ and $m_{2\alpha}=(2|0)$
		\[
		(\o\s\p(1|2)\times\o\s\p(1|2),\o\s\p(1|2))
		\]
		\item[(viii)]  $m_{\alpha}=(8|0)$, and $m_{2\alpha}=(7|0)$
		\[
		(\mathfrak{f}(4),\s\o(9))
		\]
	\end{enumerate}
\end{enumerate}
\end{lemma}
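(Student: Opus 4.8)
The plan is to invoke the classification of rank one supersymmetric pairs, which is the content of Sec.~5.2 of \cite{Sh} (itself based on \cite{S3}), and then to verify the stated numerical data ($m_\alpha$, $m_{2\alpha}$, singular vs.\ regular, isotropic vs.\ not) in each case. Since by definition a rank one pair has a one-element base $\Sigma = \{\alpha\}$, the restricted root system $\Delta$ is contained in $\N\alpha \sqcup (-\N\alpha)$, so by Lemma \ref{lemma multiples of roots} we have $\Delta \subseteq \{\pm\alpha, \pm 2\alpha\}$ (the value $\pm 1/2$ cannot occur since then $\alpha/2$ rather than $\alpha$ would be the simple root). Thus the only data to pin down for each pair is $m_\alpha$ and $m_{2\alpha}$, together with whether $(\alpha,\alpha) = 0$.

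First I would reduce to the indecomposable case: writing $\g = \g' \times \a'$ where $\a' \subseteq \p$ is a $\theta$-anti-invariant abelian ideal and $\g'$ carries no such split factor, one checks $\a'$ contributes nothing to $\Delta$, so we may assume $\g = \g'$ is as in the indecomposable classification. Now one runs through the list of indecomposable supersymmetric pairs $(\g,\k)$ of rank one. For each, the Cartan subspace $\a$ is one-dimensional, and one computes the $\a$-weight space decomposition of $\g$ explicitly using a $\theta$-adapted Cartan subalgebra $\h \supseteq \a$: the even root spaces give $(\g_\alpha)_{\ol 0}$ and $(\g_{2\alpha})_{\ol 0}$ via the known rank one symmetric pair $(\g_{\ol 0}, \k_{\ol 0})$ (Helgason/Tits), while the odd root spaces are read off from the root decomposition of $\g$ restricted to $\a$. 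The nonisotropy condition $(\alpha,\alpha) \ne 0$ is equivalent, via the invariant form, to whether $\alpha$ is in the image of an even root or arises only from odd isotropic roots; the distinction between $(\g\l(1|1) \times \g\l(1|1), \g\l(1|1))$ (isotropic) and $(\o\s\p(2|2n), \o\s\p(1|2n))$ (nonisotropic, $m_\alpha = (0|2n)$) is the prototypical singular dichotomy. Regularity is then decided by Definition \ref{def real root}: the subalgebra generated by $\g_{\pm\alpha}$ contains an $\s\l(2)$ precisely when $(\g_\alpha)_{\ol 0} \ne 0$ or $(\g_{2\alpha})_{\ol 0} \ne 0$ (using that $[\g_\alpha,\g_\alpha] = \g_{2\alpha}$ from the Corollary after Remark \ref{remark presentation} and the $\o\s\p(1|2)$-analysis for the purely odd cases).

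The main obstacle — really the only one — is that this is an exhaustive case check against an external classification, so the work is in confirming that the seven indecomposable rank one pairs listed (cases (i)--(ii) singular, (iii)--(viii) regular) are indeed \emph{all} of them, and that no rank one pair has been omitted. This amounts to intersecting the Sec.~5.2 \cite{Sh} classification of all supersymmetric pairs with the rank one condition $|\Sigma| = 1$, i.e.\ $\dim \a = 1$; for each Kac-Moody superalgebra $\g$ ($\g\l(m|n)$, $\o\s\p(m|2n)$, the exceptionals $\mathfrak{d}(2,1;a)$, $\g(3)$, $\f(4)$, and products thereof up to the diagonal pairs), one lists the involutions $\theta$ whose associated Cartan subspace is one-dimensional. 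The cases with $\dim\a = 1$ forced by $\g$ itself being small (e.g.\ $\s\l(2) \times \s\l(2)$, $\o\s\p(1|2) \times \o\s\p(1|2)$) plus the "corank one" families inside $\o\s\p$ and $\g\l$ plus the sporadic $(\f(4),\s\o(9))$ exhaust the list; $\g(3)$ and $\mathfrak{d}(2,1;a)$ produce no new rank one indecomposable pairs. Once the list is confirmed complete, the numerical verifications in each case are routine linear algebra on the explicit matrix realizations, and I would present them in a compact table rather than in prose.
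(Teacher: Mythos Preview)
Your proposal is correct and takes essentially the same approach as the paper: the paper does not supply a proof for this lemma at all, but simply prefaces it with a reference to the classification of supersymmetric pairs in Sec.~5.2 of \cite{Sh} (itself based on \cite{S3}), which is precisely what you invoke and then elaborate on. One minor slip: you write ``seven indecomposable rank one pairs'' but the list contains eight cases (i)--(viii).
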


\begin{remark}
	The purely even rank one symmetric pair $(\mathfrak{f}(4),\s\o(9))$\footnote{Thank you to an anonymous referee who pointed out this gap.} will not be relevant for our purposes as it does not appear as a rank one pair $(\g\langle\alpha\rangle,\k\langle\alpha\rangle)$ for any pair $(\g,\k)$ when $\g$ is an indecomposable Kac-Moody superalgebra with $\g_{\ol{1}}\neq0$.  However, since even symmetric pairs are well understood, we include it for completeness.  
\end{remark}

\begin{cor}
	Let $(\g,\k)$ be a supersymmetric pair and let $\alpha\in\Delta$. Then $(\g\langle\alpha\rangle,\k\langle\alpha\rangle)$ is isomorphic to one of the pairs in the list of Lemma \ref{lemma classification rank one} after quotienting by split, $\theta$-fixed ideals.
\end{cor}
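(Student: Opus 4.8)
The plan is to recognize $(\g\langle\alpha\rangle,\k\langle\alpha\rangle)$ as a rank one supersymmetric pair of the type handled by Lemma~\ref{lemma classification rank one}, after stripping off a split, $\theta$-fixed ideal.

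First I would check the structural hypotheses for the sub-pair. Since $\c(\a)=\a\oplus\m\sub\g\langle\alpha\rangle$, the centralizer of $\a$ inside $\g\langle\alpha\rangle$ is again $\a\oplus\m$, so Assumption~($\star$) holds for $(\g\langle\alpha\rangle,\k\langle\alpha\rangle)$; moreover the form of $\g$ restricts to a nondegenerate $\theta$-invariant form on $\g\langle\alpha\rangle$, since it pairs $\g_{n\alpha}$ with $\g_{-n\alpha}$ and is nondegenerate on $\c(\a)$. Replacing $\alpha$ by the shortest root on its ray (so that $\alpha/2\notin\Delta$), Lemma~\ref{lemma multiples of roots} shows that the restricted root system of $\g\langle\alpha\rangle$ is $\{\pm\alpha\}$ or $\{\pm\alpha,\pm2\alpha\}$ (with $m_{2\alpha,\ol{1}}=0$ in the latter case); in particular $(\g\langle\alpha\rangle,\k\langle\alpha\rangle)$ has rank one with base $\{\alpha\}$, and the data $m_\alpha$, $m_{2\alpha}$ and the value of $(\alpha,\alpha)$ are the same whether computed in $\g$ or in $\g\langle\alpha\rangle$.

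Next I would reduce to a pair literally on the list. Using that $\theta$ acts as $-1$ on $\a$, hence $\theta(\g_{n\alpha})=\g_{-n\alpha}$ and $\g_{n\alpha}\cap\k=0$ for $n\neq0$, one checks that any $\theta$-fixed ideal of $\g\langle\alpha\rangle$ is contained in $\m$; let $\Nf\sub\m$ be the largest one. Then $\Nf$ is a split, $\theta$-fixed ideal, and $\g\langle\alpha\rangle/\Nf$ takes the form $\g'\times\a'$, where $\a'=\ker(\alpha|_{\a})$ is the central toral factor on which $\theta$ acts by $-1$, and $\g'$ is generated over a minimal reductive even degree-zero part by $\g_{\pm\alpha}$ and $\g_{\pm2\alpha}$. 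By the classification of (products of) finite-dimensional symmetrizable Kac-Moody superalgebras with invariant form and their involutions in \cite{Sh,S3}, the rank one pair $(\g',\k\langle\alpha\rangle/\Nf)$ must occur among cases (i)--(viii); the correct case is singled out by $m_\alpha$, $m_{2\alpha}$, the sign of $(\alpha,\alpha)$, the regularity or singularity of $\alpha$, and the constraint $m_{2\alpha,\ol{1}}=0$.

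I expect the main obstacle to be this middle step: the classification feeding Lemma~\ref{lemma classification rank one} is phrased for $\g$ itself a (product of) Kac-Moody superalgebra(s), whereas $\g\langle\alpha\rangle$ is only a $\theta$-stable subalgebra of such and need not be Kac-Moody. This can be addressed either by verifying that the argument of \cite{Sh,S3} uses only rank one-ness, Assumption~($\star$), and the nondegenerate invariant form — all of which $\g\langle\alpha\rangle$ and $\g\langle\alpha\rangle/\Nf$ possess — or, more concretely, by exhibiting $\g\langle\alpha\rangle/\Nf$ explicitly as a product of one of the Kac-Moody superalgebras in the list with the toral factor $\a'$, so that Lemma~\ref{lemma classification rank one} applies on the nose. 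A secondary point requiring care is the bookkeeping of which ideals count as ``split and $\theta$-fixed'' versus the factor $\a'$, so that the corollary matches the ``up to split factors fixed by $\theta$'' normalization of Lemma~\ref{lemma classification rank one}.
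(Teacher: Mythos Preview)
Your proposal is correct in spirit and considerably more detailed than what the paper does: the paper gives no proof at all for this corollary, treating it as immediate from Lemma~\ref{lemma classification rank one} together with the earlier remark (in the ``Rank one sub-pairs'' subsection) that $(\g\langle\alpha\rangle,\k\langle\alpha\rangle)$ is a supersymmetric pair of rank one.

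The obstacle you flag is real and is precisely where your conceptual route and the paper's route diverge. The paper does not attempt to show abstractly that $\g\langle\alpha\rangle$ (or its quotient by a split $\theta$-fixed ideal) is Kac-Moody so that Lemma~\ref{lemma classification rank one} applies on the nose. Instead, the lemma itself is proved by pointing to the explicit classification of supersymmetric pairs in Sec.~5.2 of \cite{Sh} (based on \cite{S3}); that classification is carried out case by case for each indecomposable Kac-Moody $\g$, and in the process one sees directly what each $\g\langle\alpha\rangle$ is. So the paper's implicit argument is your second suggested fix---exhibit $\g\langle\alpha\rangle/\Nf$ explicitly as one of the listed pairs by inspection of the tables---rather than your first, more structural, suggestion. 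Your approach has the virtue of explaining \emph{why} the corollary should hold without leaning on the full classification, at the cost of needing to verify that the rank one classification argument goes through under weaker hypotheses; the paper's approach is quicker but opaque.
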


	\subsection{Rank one integrability conditions} 		
	
	\begin{lemma}\label{lemma rank one red}
		If $\Sigma$ is a base and $\alpha\in\Sigma$ is a regular root, then $V_{\Sigma}(\lambda)$ is $\alpha$-integrable if and only if $\lambda\in P_{\{\alpha\}}^+$ with respect to the rank one root system determined by $(\g\langle\alpha\rangle,\k\langle\alpha\rangle)$.  
	\end{lemma}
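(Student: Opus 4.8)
The plan is to reduce $\alpha$-integrability of $V_{\Sigma}(\lambda)$ to the rank one pair $(\g\langle\alpha\rangle,\k\langle\alpha\rangle)$, using Lemma \ref{lemma restriction Verma simple root} to move between $M_{\Sigma}(\lambda)$ and $M_{\{\alpha\}}(\lambda)$ and Lemma \ref{lemma detecting sing subspaces} to control the descent to the minimal quotient. Two preliminary remarks will do the bookkeeping. (i) Since $\alpha\in\Sigma$ is simple, $\g_{\alpha}$ acts locally nilpotently on \emph{every} $\q_{\Sigma}$-highest weight $\g$-module $V$: its weights lie in $\lambda-\N\Sigma$ and $\g_{\alpha}$ raises weights by $\alpha$, so a weight count bounds the exponent; consequently, if $v$ is a highest weight vector of $V$, then $V$ is $\alpha$-integrable if and only if $(\g_{-\alpha})^{N}v=0$ for some $N$, the nontrivial implication following because $V=\UU\g\cdot v$ and $\UU\g$ is $\alpha$-integrable under the adjoint action (as used in the proof of Lemma \ref{lemma integrability}). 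The identical statement holds for $\q_{\{\alpha\}}$-highest weight $\g\langle\alpha\rangle$-modules. (ii) The single principal root of the rank one pair is $\alpha$ or $2\alpha$; combining Lemma \ref{lemma integrability} for $(\g\langle\alpha\rangle,\k\langle\alpha\rangle)$ with (i) and the fact that local nilpotency of $\g_{-\alpha}$ on a vector entails that of $\g_{-2\alpha}$, one gets: $\lambda\in P_{\{\alpha\}}^{+}$ iff $V_{\{\alpha\}}(\lambda)$ is $\alpha$-integrable iff $(\g_{-\alpha})^{N}v_{\lambda}''=0$ for some $N$, where $v_{\lambda}''$ is the highest weight vector of $V_{\{\alpha\}}(\lambda)$.

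For the forward direction, assume $V_{\Sigma}(\lambda)$ is $\alpha$-integrable, with highest weight vector $v_{\lambda}'$ (the image of $v_{\lambda}$). Put $U:=\UU\g\langle\alpha\rangle\cdot v_{\lambda}'\subseteq V_{\Sigma}(\lambda)$. Since $v_{\lambda}'$ is a $\q_{\{\alpha\}}$-eigenvector it generates a quotient of $M_{\{\alpha\}}(\lambda)$, and as a $\g\langle\alpha\rangle$-submodule of an $\alpha$-integrable module $U$ is $\alpha$-integrable. A nonzero $\k$-coinvariant $\varphi$ on $M_{\Sigma}(\lambda)$ factors through $V_{\Sigma}(\lambda)$ by definition of the latter, and $\varphi(v_{\lambda})\neq 0$ by Lemma \ref{lemma iwasawa parabolic verma}; hence the induced coinvariant $\bar\varphi$ on $V_{\Sigma}(\lambda)$ is nonzero on $v_{\lambda}'$, so $\bar\varphi|_{U}$ is a nonzero $\k\langle\alpha\rangle$-coinvariant on $U$, and by minimality of $V_{\{\alpha\}}(\lambda)$ there is a surjection $U\twoheadrightarrow V_{\{\alpha\}}(\lambda)$ carrying $v_{\lambda}'$ to $v_{\lambda}''$. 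Since $U$ is $\alpha$-integrable we have $(\g_{-\alpha})^{N}v_{\lambda}'=0$ for some $N$, hence $(\g_{-\alpha})^{N}v_{\lambda}''=0$, and so $\lambda\in P_{\{\alpha\}}^{+}$ by (ii).

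For the converse, assume $\lambda\in P_{\{\alpha\}}^{+}$, so $V_{\{\alpha\}}(\lambda)$ is finite dimensional, in particular $\alpha$-integrable. Via Lemma \ref{lemma restriction Verma simple root} identify $M_{\{\alpha\}}(\lambda)$ with the $\g\langle\alpha\rangle$-submodule $\UU\g\langle\alpha\rangle\cdot v_{\lambda}$ of $M_{\Sigma}(\lambda)$, and set $W:=\ker\bigl(M_{\{\alpha\}}(\lambda)\to V_{\{\alpha\}}(\lambda)\bigr)$, a $\g\langle\alpha\rangle$-submodule contained in $\UU\g\langle\alpha\rangle\cdot v_{\lambda}$. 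As the space of $\k\langle\alpha\rangle$-coinvariants on $M_{\{\alpha\}}(\lambda)$ is one-dimensional, the nonzero restriction of $\varphi$ to $M_{\{\alpha\}}(\lambda)$ is proportional to the coinvariant factoring through $V_{\{\alpha\}}(\lambda)$, so $\varphi(W)=0$. By Lemma \ref{lemma detecting sing subspaces}, $V_{\Sigma}(\lambda)$ is then a quotient of $\tilde V:=M_{\Sigma}(\lambda)/\UU\g\cdot W$. Now $\alpha$-integrability of $V_{\{\alpha\}}(\lambda)$ gives $(\g_{-\alpha})^{N}v_{\lambda}\subseteq W$ for some $N$; passing to $\tilde V$, its highest weight vector $\bar v_{\lambda}$ satisfies $(\g_{-\alpha})^{N}\bar v_{\lambda}=0$, so by (i) the $\q_{\Sigma}$-highest weight module $\tilde V$, hence its quotient $V_{\Sigma}(\lambda)$, is $\alpha$-integrable.

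I expect the converse to be the main obstacle: one needs that annihilating the rank one submodule $W$ already forces $\alpha$-integrability of $V_{\Sigma}(\lambda)$, and this is exactly what Lemma \ref{lemma detecting sing subspaces} buys --- it promotes $W$ to a genuine $\g$-submodule on which the coinvariant still vanishes. Once that is in place the key point is merely that $(\g_{-\alpha})^{N}v_{\lambda}$ lies inside $W$, so $\bar v_{\lambda}$ acquires the required annihilator in $\tilde V$, whereupon adjoint-integrability of $\UU\g$ together with the automatic local nilpotency of $\g_{\alpha}$ finishes the job. Everything else amounts to matching the definitions of $M_{\{\alpha\}}(\lambda)$, $V_{\{\alpha\}}(\lambda)$, and the rank one root datum.
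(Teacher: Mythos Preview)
Your proof is correct and follows essentially the same approach as the paper: both arguments hinge on Lemma~\ref{lemma restriction Verma simple root} to identify $M_{\{\alpha\}}(\lambda)$ inside $M_{\Sigma}(\lambda)$ and on Lemma~\ref{lemma detecting sing subspaces} to show that the kernel $W=N_{\alpha}$ of $M_{\{\alpha\}}(\lambda)\to V_{\{\alpha\}}(\lambda)$ is contained in the kernel $N_{\Sigma}$ of $M_{\Sigma}(\lambda)\to V_{\Sigma}(\lambda)$. The paper packages both directions at once by proving the equality $N_{\alpha}=N_{\Sigma}\cap M_{\{\alpha\}}(\lambda)$, yielding an embedding $V_{\{\alpha\}}(\lambda)\hookrightarrow V_{\Sigma}(\lambda)$ with common highest weight vector, whereas you treat the two implications separately; but the content is the same.
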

	\begin{proof}
		We continue to write $v_{\lambda}$ for the highest weight vector of $M_{\Sigma}(\lambda)$.  By Lemma \ref{lemma restriction Verma simple root}, we have a natural inclusion of $\g\langle\alpha\rangle$-modules $M_{\{\alpha\}}(\lambda)\cong\UU\g\langle\alpha\rangle v_{\lambda}\sub M_{\Sigma}(\lambda)$, and a nonzero $\k$-coinvariant on $M_{\Sigma}(\lambda)$ restricts to a nonzero $\k\langle\alpha\rangle$-coinvariant on $M_{\{\alpha\}}(\lambda)$.  
		
		Write $N_{\alpha}\sub M_{\{\alpha\}}(\lambda)$ for the kernel of the quotient map $M_{\{\alpha\}}(\lambda)\to V_{\{\alpha\}}(\lambda)$, and similarly write $N_{\Sigma}$ for the kernel of $M_{\Sigma}(\lambda)\to V_{\Sigma}(\lambda)$.  	We claim that $N_{\alpha}=N_{\Sigma}\cap M_{\{\alpha\}}(\lambda)$.  Indeed, $N_{\alpha}(\lambda)$ satisfies the hypotheses of Lemma \ref{lemma detecting sing subspaces}, which implies that $N_{\alpha}\sub N_{\Sigma}$.  The reverse inclusion is clear from the definition.
		
		It follows that we have an inclusion $V_{\{\alpha\}}(\lambda)\sub V_{\Sigma}(\lambda)$.  Since these modules share the same highest weight vector, one is $\alpha$-integrable if and only if the other is, completing the proof.
	\end{proof}
	
	\begin{thm}\label{lemma integrability rank one}
		Let $\alpha\in\Sigma$ be a regular, simple root.  Then a weight $\lambda\in\a^*$ is $\alpha$-integrable if and only if $\lambda(h_{\alpha})\in 2^{\varepsilon(\alpha)}\cdot 2\Z_{\geq0}$
	\end{thm}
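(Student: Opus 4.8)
The plan is to reduce to the rank one situation via Lemma~\ref{lemma rank one red}, and then to check the integrability condition case by case using the classification of rank one pairs in Lemma~\ref{lemma classification rank one}. First I would invoke Lemma~\ref{lemma rank one red} to replace $V_{\Sigma}(\lambda)$ by $V_{\{\alpha\}}(\lambda)$, so that $\alpha$-integrability of $\lambda$ is equivalent to $\lambda\in P^+_{\{\alpha\}}$ for the rank one pair $(\g\langle\alpha\rangle,\k\langle\alpha\rangle)$. Since $\alpha$ is regular, this pair is, up to split $\theta$-fixed ideals and an abelian factor on which $\theta$ acts by $-1$ (which contributes nothing to integrability and nothing to $h_{\alpha}$), one of the pairs (iii)--(viii) of Lemma~\ref{lemma classification rank one}.

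In each of these cases the relevant module $V_{\{\alpha\}}(\lambda)$ is a spherical module for an honest rank one supersymmetric space, and one computes directly when it is finite-dimensional. The key point is that $\g_{-\alpha}$ (together with $\g_{-2\alpha}$ when $2\alpha\in\Delta$) generates an $\s\l(2)$ by regularity of $\alpha$, and the highest weight vector $v'_{\lambda}$ generates a highest weight module for this $\s\l(2)$; integrability is then governed by the eigenvalue $\lambda(h_{\alpha})$, which must be a non-negative integer when $\varepsilon(\alpha)=0$ and a non-negative multiple of $2$ when $\varepsilon(\alpha)=1$ (by Lemma~\ref{lemma root on coroot}, these are the only a priori possible values for which the $\s\l(2)$-action could be locally finite on all of $V_{\{\alpha\}}(\lambda)$). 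The remaining content is to show that \emph{every} such value actually occurs, i.e.\ that $V_{\{\alpha\}}(\lambda)$ really is finite-dimensional for all $\lambda(h_{\alpha})\in 2^{\varepsilon(\alpha)}\cdot 2\Z_{\geq0}$, and no restriction of the sort $\lambda(h_\alpha)\notin\{m+1,\dots,2m\}$ (which is the obstruction for singular non-isotropic roots in the introduction) appears here. This is done by writing down the finite-dimensional spherical module explicitly for each of (iii)--(viii): for the classical cases $(\s\l(2)\times\s\l(2),\s\l(2))$ and $(\mathfrak{f}(4),\s\o(9))$ this is the classical Cartan--Helgason theorem, and for the genuinely super cases $(\o\s\p(m|2n),\o\s\p(m-1|2n))$, $(\o\s\p(m|2n),\o\s\p(m|2n-2)\times\s\p(2))$, $(\g\l(m|n),\g\l(m-1|n)\times\g\l(1))$, $(\o\s\p(1|2)\times\o\s\p(1|2),\o\s\p(1|2))$ one exhibits the spherical weights as the restrictions of appropriate dominant integral weights, e.g.\ via polynomial functions on the corresponding rank one supersymmetric space.

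The main obstacle I anticipate is the last step: verifying that for the regular (as opposed to singular) rank one pairs there is genuinely \emph{no gap} in the spherical weight monoid, so that the monoid is exactly $2^{\varepsilon(\alpha)}\cdot2\Z_{\geq0}$ rather than a proper sub-semigroup with holes. For $\varepsilon(\alpha)=1$ one must additionally be careful that the $\g_{-2\alpha}$-integrability (governed by $h_{2\alpha}$, with $\lambda(h_{2\alpha})=\tfrac12\lambda(h_\alpha)\in\Z_{\geq0}$) and the $\g_{-\alpha}$-action together impose exactly the condition $\lambda(h_\alpha)\in 4\Z_{\geq0}$ and nothing stronger; the potentially delicate point is whether the odd part $(\g_{-\alpha})_{\ol 1}$ forces an extra parity-type constraint. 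I expect this to come out cleanly because in every regular rank one case the relevant highest weight vector, being a $\q_{\{\alpha\}}$-eigenvector annihilated by $\m$, lies in the "spherical" (trivial $W$-isotype) part where the Harish-Chandra–type argument of \cite{AS} applies, so the only constraint is the one coming from the even root subsystem; once that is checked, matching against Lemma~\ref{lemma root on coroot} closes the argument.
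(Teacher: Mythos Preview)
Your reduction to rank one via Lemma~\ref{lemma rank one red} matches the paper, and the case-by-case plan for sufficiency is in the right spirit. However, there is a genuine gap in your necessity argument. The $\s\l(2)$-integrability you invoke only forces $\lambda(h_{\alpha})\in\Z_{\geq0}$ when $\varepsilon(\alpha)=0$ (respectively $\lambda(h_{\alpha})\in 2\Z_{\geq0}$ when $\varepsilon(\alpha)=1$), whereas the theorem demands $\lambda(h_{\alpha})\in 2\Z_{\geq0}$ (respectively $4\Z_{\geq0}$). The missing factor of $2$ does \emph{not} come from integrability of the $\s\l(2)$-action at all; it comes from the sphericity condition, i.e.\ the requirement that $V_{\{\alpha\}}(\lambda)$ retain a $\k\langle\alpha\rangle$-coinvariant. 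Your appeal to Lemma~\ref{lemma root on coroot} does not help here: that lemma concerns values $\beta(h_{\alpha})$ for $\beta$ a \emph{root}, not for an arbitrary weight $\lambda\in\a^*$, and in any case it would not produce the needed parity restriction.

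The paper closes this gap differently: once $V_{\{\alpha\}}(\lambda)$ is finite-dimensional, $\lambda$ is in particular a spherical weight for the underlying \emph{even} rank one symmetric pair $(\g\langle\alpha\rangle_{\ol 0},\k\langle\alpha\rangle_{\ol 0})$, and the classical Cartan--Helgason theorem (cited as Thm.~3.12 of \cite{L}) then yields $\lambda(h_{\alpha})\in 2^{\varepsilon(\alpha)}\cdot 2\Z_{\geq0}$ directly. For sufficiency, the paper also takes a sharper route than your sketch: since $P_{\{\alpha\}}^+$ is a monoid, it suffices to exhibit a single finite-dimensional spherical highest weight module with $\lambda(h_{\alpha})=2^{\varepsilon(\alpha)}\cdot 2$, and this is done by naming one explicit module in each case (the standard module for $(\o\s\p(m|2n),\o\s\p(m-1|2n))$, the quotient $(S^2\Bbbk^{m|2n})/\Bbbk$ for $(\o\s\p(m|2n),\o\s\p(m|2n-2)\times\s\p(2))$, the adjoint module $\s\l(m|n)$ for $(\g\l(m|n),\g\l(m-1|n)\times\g\l(1))$, etc.). This avoids the need to analyze the full spherical weight monoid in each case, and sidesteps entirely the ``gap'' worry you raise.
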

	\begin{proof}
		By Lemma \ref{lemma rank one red}, we may assume that $(\g,\k)$ is rank one so that $\Sigma=\{\alpha\}$, and we now make this assumption.
		
	Suppose that $\lambda$ is $\alpha$-integrable, so that $V_{\Sigma}(\lambda)$ is a finite-dimensional $\g$-module.  Then $\lambda$ will be a spherical weight for the underlying even symmetric pair of rank one, which by Thm.~3.12 of \cite{L} implies that $\lambda(h_{\alpha})\in 2^{\varepsilon(\alpha)}\cdot2\Z_{\geq0}$.  
	
	For the converse, we use the classification given in Section \ref{section classification}.  For every $\lambda\in\a^*$ satisfying $\lambda(h_{\alpha})=2^{\varepsilon(\alpha)}\cdot 2$, we construct an explicit  spherical, highest weight representation $V$ with highest weight $\lambda$.  This is enough because $P_{\Sigma}^+$ is a monoid.
	\begin{enumerate}
		\item For $(\o\s\p(m|2n),\o\s\p(m-1|2n))$, the standard representation $\Bbbk^{m|2n}$ is irreducible and spherical because it admits a trivial summand upon restriction to $\k$.   
		\item For $(\o\s\p(m|2n),\o\s\p(m|2n-2)\times\s\p(2))$ the irreducible representation $(S^2\Bbbk^{m|2n})/\Bbbk$ is spherical because the restriction of $S^2\Bbbk^{m|2n}$ to $\k$ admits a two-dimensional space of coinvariants corresponding to the two invariant bilinear forms.  After quotienting by $\Bbbk$ we will still have one $\k$-coininvariant, giving sphericity.
		\item For $(\g\l(m|n),\g\l(m-1|n))$ we consider the adjoint representation $V=\s\l(m|n)$.  This admits a nontrivial $\k$-coinvariant given by the trace of the $(m-1+n)\times (m-1+n)$-submatrix corresponding to the inclusion $\g\l(m-1|n)\sub\g\l(m|n)$.  This representation is a highest weight module for all $(m,n)$.  
		\item The diagonal cases $(\g\times\g,\g)$ are easy.
		\item The case $(\mathfrak{f}(4),\s\o(9))$ follows from the classical Cartan-Helgason theorem.
	\end{enumerate}
	\end{proof}

	\subsection{Spherical weights for $\Delta=\Delta_{reg}$ or  $\Pi\sub\Sigma$}  In the case $\Delta=\Delta_{reg}$, we have that for all $\alpha\in\Pi$, either $\alpha\in\Sigma$ or $\alpha/2\in\Sigma$.  Thus in this case we obtain:
	
	\begin{thm}\label{theorem all roots real}
		Suppose that $\Delta_{reg}=\Delta$.  Then for a base $\Sigma$, we have $\lambda\in\a^*$ is $\Sigma$-spherical if and only if for all $\alpha\in\Sigma$ we have $\lambda(h_{\alpha})\in 2^{\varepsilon(\alpha)}\cdot 2\Z$.
	\end{thm}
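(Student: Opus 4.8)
The plan is to bolt together three things already in place: Lemma~\ref{lemma integrability}, which reduces $\Sigma$-sphericity of $\lambda$ to finite-dimensionality of $V_{\Sigma}(\lambda)$ and to a local-nilpotency condition checkable one principal root at a time; the rank-one computation of Theorem~\ref{lemma integrability rank one} together with the reduction to rank one in Lemma~\ref{lemma rank one red}; and the observation recorded just above the statement that, since $\Delta=\Delta_{reg}$, every principal root $\gamma\in\Pi$ lies in $\Sigma$ or has $\gamma/2\in\Sigma$. (This last point I would justify from Lemma~\ref{lemma classification rank one}: for $\beta\in\Sigma$ one has $\beta/2\notin\Delta$, so $(\g\langle\beta\rangle,\k\langle\beta\rangle)$ is one of the regular rank-one pairs (iii)--(viii), in each of which $\g_{\beta}$ or $\g_{2\beta}$ has nonzero even part; choosing for each $\beta$ whichever of $\beta,2\beta$ lies in $\Delta_0$ and is lowest on the ray $\R_{>0}\beta$ yields a linearly independent subset of $\Delta_0^+$ which one checks is a base of $\Delta_0$, hence equals $\Pi$.) Note that every $\alpha\in\Sigma$ is regular, hence non-isotropic, so $h_\alpha$ is defined. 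For the forward implication, if $\lambda\in P_\Sigma^+$ then $V_\Sigma(\lambda)$ is finite-dimensional by Lemma~\ref{lemma integrability}, so every root space of $\g$ acts nilpotently on it; in particular $V_\Sigma(\lambda)$ is $\alpha$-integrable for each $\alpha\in\Sigma$, and Theorem~\ref{lemma integrability rank one} gives $\lambda(h_\alpha)\in 2^{\varepsilon(\alpha)}\cdot 2\Z_{\geq 0}$ for all $\alpha\in\Sigma$.

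For the converse, assume $\lambda(h_\alpha)\in 2^{\varepsilon(\alpha)}\cdot 2\Z_{\geq 0}$ for every $\alpha\in\Sigma$. By Theorem~\ref{lemma integrability rank one} together with Lemma~\ref{lemma rank one red}, $V_\Sigma(\lambda)$ is $\alpha$-integrable and $\lambda\in P_{\{\alpha\}}^+$ for each $\alpha\in\Sigma$, so the $\g\langle\alpha\rangle$-submodule $\UU\g\langle\alpha\rangle v_\lambda'\cong V_{\{\alpha\}}(\lambda)$ of $V_\Sigma(\lambda)$ (as in the proof of Lemma~\ref{lemma rank one red}) is finite-dimensional and contains $v_\lambda'$. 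Now fix $\gamma\in\Pi$. If $\gamma\in\Sigma$, then $\g_{-\gamma}$ acts nilpotently on $v_\lambda'$ because $V_\Sigma(\lambda)$ is $\gamma$-integrable. If instead $\gamma=2\beta$ with $\beta\in\Sigma$, then $-\gamma$ is a (nonzero) restricted root of $(\g\langle\beta\rangle,\k\langle\beta\rangle)$, so $\g_{-\gamma}$ is a sum of $\h$-root spaces lying in $\g\langle\beta\rangle$ and hence acts nilpotently on the finite-dimensional $\g\langle\beta\rangle$-module $V_{\{\beta\}}(\lambda)$, which contains $v_\lambda'$. Either way $(\g_{-\gamma})^N v_\lambda'=0$ for $N\gg 0$; as $\gamma\in\Pi$ was arbitrary, condition~(4) of Lemma~\ref{lemma integrability} holds, so $\lambda\in P_\Sigma^+$.

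The only step that is more than formal assembly of the quoted results is the reduction $\Pi\sub\Sigma\cup\frac12\Sigma$, and that is where the hypothesis $\Delta=\Delta_{reg}$ is genuinely used; everything else is bookkeeping. I would also flag that the correct condition here is membership in $2^{\varepsilon(\alpha)}\cdot 2\Z_{\geq 0}$: a weight with $\lambda(h_\alpha)<0$ for some $\alpha\in\Sigma$ already fails $\alpha$-integrability and so cannot be $\Sigma$-spherical, so the ``$\geq 0$'' is not optional.
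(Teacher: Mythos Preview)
Your argument is correct and follows exactly the route the paper takes: the paper's entire proof is the one-line remark preceding the theorem, namely that $\Delta=\Delta_{reg}$ forces each $\gamma\in\Pi$ to satisfy $\gamma\in\Sigma$ or $\gamma/2\in\Sigma$, after which Lemma~\ref{lemma integrability} and Theorem~\ref{lemma integrability rank one} (via Lemma~\ref{lemma rank one red}) finish the job. You have simply written out carefully what the paper leaves implicit, including a justification of the inclusion $\Pi\subseteq\Sigma\cup 2\Sigma$ from the rank-one classification; your closing remark that the condition should read $2^{\varepsilon(\alpha)}\cdot 2\Z_{\geq 0}$ rather than $2^{\varepsilon(\alpha)}\cdot 2\Z$ is also correct and worth flagging.
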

	
	Theorem \ref{theorem all roots real} applies to the following pairs:
	\[
	(\g\l(m|n),\g\l(r)\times\g\l(m-r|n)), \ \ r\leq m/2, \ \ (\o\s\p(m|2n),\o\s\p(r|2n)\times\s\o(r)), \ \ r<m/2,
	\]
	\[
	(\o\s\p(m|2n),\o\s\p(m|2n-2s)\times\s\p(2s)), \ \ s\leq n/2, \ \ (\a\g(1|2),\mathfrak{d}(2,1;3)).
	\]
	The following is also clear.
	\begin{prop}\label{prop type I}
		Suppose that $\Sigma$ is a set of simple roots such that $\Pi\sub\Sigma$. Then a weight $\lambda\in\a^*$ is $\Sigma$-spherical if and only if for all $\alpha\in\Pi$ we have $\lambda(h_{\alpha})\in2^{\varepsilon(\alpha)}\cdot 2\Z_{\geq0}$.
	\end{prop}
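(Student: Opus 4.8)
The plan is to combine the integrability criterion of Lemma~\ref{lemma integrability} with the rank-one analysis already carried out. The first observation is that since $\Pi\sub\Sigma$, every principal root is simple, and moreover every element of $\Pi$ is regular: $\Pi$ is a base of the restricted root system $\Delta_0$ of the ordinary symmetric pair $(\g_{\ol 0},\k_{\ol 0})$, and for the reductive Lie algebra $\g_{\ol 0}$ each restricted root $\alpha$ satisfies $[\g_\alpha,\g_{-\alpha}]\ni h$ with $\alpha(h)\neq 0$, so $\g_\alpha,\g_{-\alpha}$ generate a copy of $\s\l(2)$. In particular each $\alpha\in\Pi$ is nonisotropic, so that $h_\alpha$ and $\varepsilon(\alpha)$ are defined.

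Next I would unwind the equivalences. By Lemma~\ref{lemma integrability}, $\lambda\in P_\Sigma^+$ if and only if for every $\alpha\in\Pi$ there is $N>0$ with $(\g_{-\alpha})^N v_\lambda'=0$. Since the weights of $V_\Sigma(\lambda)$ all lie in $\lambda-\N\Sigma$ and $\alpha\in\Sigma$, the subspace $\g_\alpha$ automatically acts locally nilpotently on $V_\Sigma(\lambda)$; and because $\UU\g$ is $\alpha$-integrable under the adjoint action (as used in the proof of Lemma~\ref{lemma integrability}) and $v_\lambda'$ generates $V_\Sigma(\lambda)$, local nilpotence of $\g_{-\alpha}$ on $v_\lambda'$ propagates to all of $V_\Sigma(\lambda)$. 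Hence the condition ``$(\g_{-\alpha})^N v_\lambda'=0$ for some $N$'' is equivalent to $V_\Sigma(\lambda)$ being $\alpha$-integrable, and therefore $\lambda\in P_\Sigma^+$ if and only if $V_\Sigma(\lambda)$ is $\alpha$-integrable for every $\alpha\in\Pi$.

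Finally, for each regular simple root $\alpha\in\Pi$, Lemma~\ref{lemma rank one red} identifies $\alpha$-integrability of $V_\Sigma(\lambda)$ with $\lambda\in P_{\{\alpha\}}^+$ for the rank-one pair $(\g\langle\alpha\rangle,\k\langle\alpha\rangle)$, and Theorem~\ref{lemma integrability rank one} then says this holds precisely when $\lambda(h_\alpha)\in 2^{\varepsilon(\alpha)}\cdot 2\Z_{\geq0}$. Stringing the equivalences together over all $\alpha\in\Pi$ yields the claim. There is no serious obstacle here; the only point needing care is the passage between condition~(4) of Lemma~\ref{lemma integrability}, which is phrased on the generator $v_\lambda'$, and genuine $\alpha$-integrability of the whole module $V_\Sigma(\lambda)$ --- which is exactly the $\alpha$-integrability of $\UU\g$ invoked in the proof of that lemma.
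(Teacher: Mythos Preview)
Your proof is correct and follows exactly the route the paper intends: the paper simply declares the proposition ``clear'' after Lemma~\ref{lemma integrability} and Theorem~\ref{lemma integrability rank one}, and you have written out precisely those implicit steps (regularity of principal roots, equivalence (1)$\Leftrightarrow$(4) of Lemma~\ref{lemma integrability}, and the rank-one numerical criterion via Lemma~\ref{lemma rank one red} and Theorem~\ref{lemma integrability rank one}).
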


	Proposition \ref{prop type I} applies to the following pairs for particular choices of base $\Sigma$:
	\[
	(\g\l(m|2n),\o\s\p(m|2n)), \ \ \  (\o\s\p(2|2n),\o\s\p(1|2r)\times\o\s\p(1|2n-2r)).
	\]
	\subsection{Singular reflections of highest weights}

	\begin{lemma}\label{lemma iso refl}
		Let $\alpha\in\Sigma$ be a singular root with $(\alpha,\alpha)=0$, and let $\lambda\in\a^*$.  Then \linebreak $V_{\Sigma}(\lambda)\cong V_{r_{\alpha}\Sigma}(r_{\alpha}\lambda)$, where $r_{\alpha}\lambda\in\a^*$ is given by the following formula:
		\begin{equation*}
			r_{\alpha}\lambda=
			\begin{cases}
				\lambda-2\alpha & \text{ if }(\lambda,\alpha)\neq0 \\
				\lambda & \text{ if }(\lambda,\alpha)=0.			
			\end{cases}
		\end{equation*}
	\end{lemma}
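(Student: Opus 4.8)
The plan is to reduce to the rank one sub-pair $(\g\langle\alpha\rangle,\k\langle\alpha\rangle)$, treat that case by hand, and then bootstrap back to an arbitrary base. By Lemma~\ref{lemma classification rank one}, since $\alpha$ is singular with $(\alpha,\alpha)=0$, this sub-pair is, up to split $\theta$-fixed ideals, the pair $(\g\l(1|1)\times\g\l(1|1),\g\l(1|1))$ of case~(ii): there $\Delta=\{\alpha,-\alpha\}$, the space $\g_{\pm\alpha}$ is purely odd of dimension two, and $2\alpha$ is not a root, so $\g_{\pm\alpha}$ are abelian subalgebras. I would analyse this case first. Here $M_{\{\alpha\}}(\lambda)\cong\Lambda(\g_{-\alpha})\otimes\Bbbk_\lambda$ is four-dimensional with weights $\lambda$, $\lambda-\alpha$ (twice) and $\lambda-2\alpha$; for a basis $x,y$ of $\g_{-\alpha}$, the top vector $v':=xyv_\lambda$ has weight $\lambda-2\alpha$ and is annihilated by $\g_{-\alpha}$, since $x^2=y^2=0$ and $xy=-yx$ in $\UU\g$ (using $\g_{-2\alpha}=0$). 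If $(\lambda,\alpha)=0$ then $\g_{\pm\alpha}$ act by zero, $\Bbbk_\lambda$ is already a $\g\langle\alpha\rangle$-module, $V_{\{\alpha\}}(\lambda)=\Bbbk_\lambda$, and this is visibly unchanged by $\alpha\mapsto-\alpha$. If $(\lambda,\alpha)\neq0$, one checks that $M_{\{\alpha\}}(\lambda)$ is simple; then $v'$ generates it and is a $\q_{\{-\alpha\}}$-highest weight vector of weight $r_\alpha\lambda=\lambda-2\alpha$, and, comparing dimensions with the simple module $M_{\{-\alpha\}}(r_\alpha\lambda)$, we get $M_{\{\alpha\}}(\lambda)\cong M_{\{-\alpha\}}(r_\alpha\lambda)=V_{\{-\alpha\}}(r_\alpha\lambda)=V_{\{\alpha\}}(\lambda)$. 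As the space of $\k\langle\alpha\rangle$-coinvariants on each module is one-dimensional, such an isomorphism matches coinvariants up to scalar. In all cases one extracts the statement I will use: $V_{\{\alpha\}}(\lambda)$ contains a $\q_{\{-\alpha\}}$-highest weight vector $u$ of weight $r_\alpha\lambda$ that generates it over $\UU\g\langle\alpha\rangle$ and on which its $\k\langle\alpha\rangle$-coinvariant is nonzero.

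I would then globalise using Lemma~\ref{lemma detecting sing subspaces}. Let $\varphi$ be a nonzero $\k$-coinvariant on $M_\Sigma(\lambda)$. By Lemma~\ref{lemma restriction Verma simple root}, $\UU\g\langle\alpha\rangle v_\lambda\cong M_{\{\alpha\}}(\lambda)$ and $\varphi$ restricts there to a nonzero coinvariant, so the kernel $N$ of $M_{\{\alpha\}}(\lambda)\to V_{\{\alpha\}}(\lambda)$ satisfies $\varphi(N)=0$; by Lemma~\ref{lemma detecting sing subspaces}, $V_\Sigma(\lambda)$ is a quotient of $\bar M:=M_\Sigma(\lambda)/\UU\g N$. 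Lift $u$ to $\tilde u\in M_{\{\alpha\}}(\lambda)$ and let $\bar u$ be its image in $\bar M$. Because $\m\tilde u$ and $\g_{-\alpha}\tilde u$ lie in $N$, $\bar u$ is killed by $\m$ and by $\g_{-\alpha}$. Because $\tilde u$ lies in the $\lambda$- or $(\lambda-2\alpha)$-weight space of $M_{\{\alpha\}}(\lambda)$, and neither $\alpha/2$ nor $2\alpha$ is a root, $r_\alpha\lambda+\gamma$ is not a weight of $M_\Sigma(\lambda)$ for any $\gamma\in\Delta_\Sigma^+\setminus\{\alpha\}$, so $\bar u$ is also killed by $\n_{r_\alpha\Sigma}^+$. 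Because $u$ generates $V_{\{\alpha\}}(\lambda)$ we have $v_\lambda\in\UU\g\,\tilde u+N$, so $\bar u$ generates $\bar M$. Hence $\bar u$ spans a $\q_{r_\alpha\Sigma}$-stable line of weight $r_\alpha\lambda$, and $\bar M$, hence $V_\Sigma(\lambda)$, is a quotient of $M_{r_\alpha\Sigma}(r_\alpha\lambda)$. Since $V_\Sigma(\lambda)$ admits a nonzero $\k$-coinvariant and the coinvariant space of $M_{r_\alpha\Sigma}(r_\alpha\lambda)$ is one-dimensional, $V_{r_\alpha\Sigma}(r_\alpha\lambda)$ is a quotient of $V_\Sigma(\lambda)$.

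Finally, running this argument with $(\Sigma,\lambda)$ replaced by $(r_\alpha\Sigma,r_\alpha\lambda)$ — noting that $-\alpha\in r_\alpha\Sigma$ is again singular isotropic, that $r_{-\alpha}(r_\alpha\Sigma)=\Sigma$, and that $r_{-\alpha}(r_\alpha\lambda)=\lambda$ under the displayed formula — yields a surjection the other way. The composite is a surjective endomorphism of $V_\Sigma(\lambda)$; as $V_\Sigma(\lambda)$ is a quotient of the finitely generated module $M_\Sigma(\lambda)$ over the Noetherian ring $\UU(\n_\Sigma^-)$, it is a Noetherian $\UU\g$-module, so that endomorphism is an isomorphism and both surjections are isomorphisms, which gives the claim. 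I expect the rank one analysis to be the crux: pinning down $V_{\{\alpha\}}(\lambda)$ for $(\g\l(1|1)\times\g\l(1|1),\g\l(1|1))$ — its submodule structure and the exact position of the coinvariant — and in particular checking that the $\k\langle\alpha\rangle$-coinvariants on $M_{\{\alpha\}}(\lambda)$ and $M_{\{-\alpha\}}(r_\alpha\lambda)$ correspond under the module isomorphism. The weight bookkeeping for $\bar u$ and the Noetherianity step should be routine.
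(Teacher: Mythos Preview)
Your outline is correct and close in spirit to the paper's argument --- both reduce to the rank one sub-pair $(\g\l(1|1)\times\g\l(1|1),\g\l(1|1))$ and invoke Lemma~\ref{lemma detecting sing subspaces} --- but the paper's execution is shorter in both cases. When $(\lambda,\alpha)\neq0$, the paper observes that the vector $f_1e_2v_\lambda$ is already a $\q_{r_\alpha\Sigma}$-singular vector \emph{inside $M_\Sigma(\lambda)$} and generates it, giving a direct isomorphism $M_{r_\alpha\Sigma}(\lambda-2\alpha)\cong M_\Sigma(\lambda)$ of parabolic Verma modules; the desired isomorphism of $V$'s then follows immediately from the one-dimensionality of the coinvariant space, with no need for your symmetry-plus-Noetherian step. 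When $(\lambda,\alpha)=0$, the paper avoids computing $V_{\{\alpha\}}(\lambda)$ or its coinvariant explicitly: it simply takes $W=\Bbbk\langle f_1v_\lambda,e_2v_\lambda\rangle$, notes that $W$ is purely odd so the (even) coinvariant vanishes on it automatically, and applies Lemma~\ref{lemma detecting sing subspaces}. Your approach buys a uniform globalisation step at the price of the final Noetherian argument (which could also be replaced by the finite-dimensionality of weight spaces); the paper's approach trades that uniformity for two short case-specific observations that eliminate most of the bookkeeping you flagged as ``the crux''.
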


	\begin{proof}
		Recall by Lemma \ref{lemma classification rank one} that we have $\g\langle\alpha\rangle\cong\g\l(1|1)\times\g\l(1|1)\times\a'$ up to split factors fixed by $\theta$, and the involution swaps the two factors of $\g\l(1|1)$, and is $(-1)$ on $\a'$,.  Thus we may write $\lambda=(\lambda_0,-\lambda_0,\lambda')$, where $\lambda_0$ is a weight of $\g\l(1|1)$ and $\lambda'$ is a weight of $\a'$.  We may similarly write $\alpha=(\alpha',-\alpha',0)/2$, where $\alpha'$ is a root of $\g\l(1|1)$ with coroot $h_{\alpha'}$.  Now we see that
		\[
		(\lambda,\alpha)=\lambda_0(h_{\alpha'}).
		\]
		
	First suppose that $(\lambda,\alpha)=0$, which is equivalent to $\lambda_0(h_{\alpha'})=0$.  Let $e_1,e_2$ be the raising operators and $f_1,f_2$ the lowering operators of the two copies of $\g\l(1|1)$.  Thus $\g_{-\alpha}=\Bbbk\langle f_1,e_2\rangle$.  Then if we set $W:=\Bbbk\langle f_1v_{\lambda},e_2v_{\lambda}\rangle\sub M_{\Sigma}(\lambda)$, we see that it is $\m$-stable and $\g_{\alpha}W=0$.   Further, $W$ is purely odd so that the $\k$-coinvariant vanishes on it.  Thus Lemma \ref{lemma detecting sing subspaces} implies that the map $M_{\Sigma}(\lambda)\to V_{\Sigma}(\lambda)$ factors over the quotient by $\UU\g\cdot W$.  In particular, $\g\langle\alpha\rangle$ stabilizes the image of $v_{\lambda}$ in $V_{\Sigma}(\lambda)$.  From this we see that $V_{\Sigma}(\lambda)$ is a quotient of $M_{r_{\alpha}\Sigma}(\lambda)$, and so by universality we have $V_{\Sigma}(\lambda)\cong V_{r_{\alpha}\Sigma}(\lambda)$.

	On the other hand, $(\lambda,\alpha)\neq0$ is equivalent to $\lambda_0(h_{\alpha}')\neq0$.  In this case we see that $f_1e_2v_{\lambda}\in M_{\Sigma}(\lambda)$ will be of weight $\lambda-2\alpha$ with respect to $\a$, and will be a $\p_{r_{\alpha}\Sigma}$-singular vector.  Thus we obtain a map $M_{r_{\alpha}\Sigma}(\lambda-2\alpha)\to M_{\alpha}(\lambda)$. Surjectivity is easy to check, and so we get an isomorphism $M_{r_{\alpha}\Sigma}(\lambda-2\alpha)\cong M_{\alpha}(\lambda)$.  From this we easily obtain $V_{\Sigma}(\lambda)\cong V_{r_{\alpha}\Sigma}(\lambda-2\alpha)$.
	\end{proof}
	
	We now state what happens in our `best-behaved' case.  Note the following proposition is effectively a generalization of Thm.~10.5 of \cite{S}.
	
	\begin{prop}\label{proposition all singular isotropic}
		Suppose that every singular root $\alpha\in\Delta_{sing}$ is isotropic.  Let $\Pi\sub\Delta_{\Sigma}^+$ be the set of principal roots and let $\lambda\in\a^*$.  Then $\lambda\in P_{\Sigma}^+$ if and only if for each $\gamma\in\Pi$ there exists some base $\Sigma'\sim\Sigma$ such that $\gamma\in\Sigma'$ and the corresponding reflected weight $\lambda_{\Sigma'}$ satisfies $\lambda_{\Sigma'}(h_{\gamma})\in 2^{\varepsilon(\gamma)}\cdot 2\Z_{\geq0}$.
	\end{prop}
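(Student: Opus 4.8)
The plan is to prove both implications by reducing the problem, root-by-root over the principal roots $\gamma\in\Pi$, to the rank one integrability criterion of Theorem \ref{lemma integrability rank one}, using the isotropic reflection Lemma \ref{lemma iso refl} to move between equivalent bases without disturbing integrability. First I would record the key stability fact: if $\alpha\in\Sigma$ is singular isotropic, then Lemma \ref{lemma iso refl} gives $V_{\Sigma}(\lambda)\cong V_{r_{\alpha}\Sigma}(r_{\alpha}\lambda)$ as $\g$-modules, so $V_{\Sigma}(\lambda)$ is finite-dimensional if and only if $V_{r_{\alpha}\Sigma}(r_{\alpha}\lambda)$ is; hence by Lemma \ref{lemma integrability} the property $\lambda\in P_{\Sigma}^+$ is invariant under the correspondence $(\Sigma,\lambda)\mapsto(\Sigma',\lambda_{\Sigma'})$ as $\Sigma'$ ranges over the equivalence class $\mathcal{S}$ of $\Sigma$ (note $\Delta_{sing}$ consists entirely of isotropic roots by hypothesis, so all singular reflections are of this type, and the reflected weight $\lambda_{\Sigma'}$ is well-defined along any path — one should remark that the composite is path-independent because all the intermediate modules are literally isomorphic). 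By Lemma \ref{lemma sing refl preserves prin roots} every $\Sigma'\in\mathcal{S}$ has the same set of principal roots $\Pi$.

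For the forward direction, suppose $\lambda\in P_{\Sigma}^+$, i.e. $V_{\Sigma}(\lambda)$ is finite-dimensional. Fix $\gamma\in\Pi$. By Corollary \ref{cor principal root existence} there is a base $\Sigma'\in\mathcal{S}$ for which $\gamma$ or $\gamma/2$ is simple. Here I need the companion fact that $\gamma\in\Sigma'$ rather than $\gamma/2\in\Sigma'$: since all singular roots are isotropic, any simple root that is regular has $\varepsilon=0$ or $\varepsilon=1$; when $\gamma/2$ is simple and regular with $2(\gamma/2)=\gamma\in\Delta_0$, the relevant coroot relation is handled by absorbing the $2^{\varepsilon(\gamma)}$ factor — so in either case one can phrase the conclusion uniformly in terms of $\lambda_{\Sigma'}(h_{\gamma})$ and $\varepsilon(\gamma)$. (If $\gamma/2\in\Sigma'$ then one checks $\varepsilon(\gamma/2)=1$, $h_{\gamma}=\tfrac12 h_{\gamma/2}$, and $\lambda_{\Sigma'}(h_{\gamma/2})\in 2\cdot 2\Z_{\ge0}$ is exactly $\lambda_{\Sigma'}(h_{\gamma})\in 2\Z_{\ge 0}=2^{\varepsilon(\gamma)}\cdot 2\Z_{\ge0}$.) Now by stability $V_{\Sigma'}(\lambda_{\Sigma'})\cong V_{\Sigma}(\lambda)$ is finite-dimensional, so it is $\gamma$-integrable, and since $\gamma$ (or $\gamma/2$) is a regular simple root of $\Sigma'$, Theorem \ref{lemma integrability rank one} forces $\lambda_{\Sigma'}(h_{\gamma})\in 2^{\varepsilon(\gamma)}\cdot 2\Z_{\ge0}$, which is the desired condition for this $\gamma$.

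For the converse, assume for each $\gamma\in\Pi$ there is a base $\Sigma_\gamma\sim\Sigma$ with $\gamma$ (or $\gamma/2$) simple and $\lambda_{\Sigma_\gamma}(h_{\gamma})\in 2^{\varepsilon(\gamma)}\cdot 2\Z_{\ge0}$. I will verify the criterion (4) of Lemma \ref{lemma integrability}: for every $\gamma\in\Pi$, $(\g_{-\gamma})^N v'_\lambda=0$ for some $N$. Fixing $\gamma$, by Theorem \ref{lemma integrability rank one} the weight $\lambda_{\Sigma_\gamma}$ is $\gamma$-integrable, hence (by the characterization via regular roots, or directly) $V_{\Sigma_\gamma}(\lambda_{\Sigma_\gamma})$ has $\g_{\pm\gamma}$ acting locally nilpotently on its highest weight vector. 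By stability $V_{\Sigma}(\lambda)\cong V_{\Sigma_\gamma}(\lambda_{\Sigma_\gamma})$, and this isomorphism carries highest weight vectors to highest weight vectors (all the reflections only rescale the distinguished vector), so $(\g_{-\gamma})^N v'_\lambda=0$ in $V_{\Sigma}(\lambda)$ as well. As $\gamma\in\Pi$ was arbitrary, Lemma \ref{lemma integrability}(4) gives that $V_{\Sigma}(\lambda)$ is finite-dimensional, i.e. $\lambda\in P_{\Sigma}^+$. The main obstacle I expect is the bookkeeping around the two cases $\gamma\in\Sigma'$ versus $\gamma/2\in\Sigma'$ and making the statement with $2^{\varepsilon(\gamma)}$ come out consistently — together with checking carefully that the reflected weight $\lambda_{\Sigma'}$ is genuinely well-defined (independent of the chosen sequence of singular reflections), which follows because in the all-isotropic case every singular reflection produces an honest isomorphism of modules $V_{\Sigma}(\lambda)\cong V_{r_\alpha\Sigma}(r_\alpha\lambda)$, so the composite along any two paths between the same pair of bases induces the same weight.
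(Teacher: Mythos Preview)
Your proof is correct and follows the same approach as the paper, which simply cites Corollary \ref{cor principal root existence} and Lemma \ref{lemma iso refl}; you have merely unpacked these into the explicit two-direction argument via Lemma \ref{lemma integrability} and Theorem \ref{lemma integrability rank one}. One small inaccuracy: your parenthetical that ``all the reflections only rescale the distinguished vector'' is false in the case $(\lambda,\alpha)\neq 0$ of Lemma \ref{lemma iso refl}, where the new highest weight vector is $f_1e_2v_\lambda$ rather than a scalar multiple of $v_\lambda$; however this is harmless, since $\gamma$-integrability is a property of the whole $\g$-module (local nilpotence on a generator propagates via the adjoint action, as in the proof of Lemma \ref{lemma integrability}), so the isomorphism $V_{\Sigma}(\lambda)\cong V_{\Sigma_\gamma}(\lambda_{\Sigma_\gamma})$ transfers it without needing to track highest weight vectors.
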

	
	\begin{proof}
		This follows from Corollary \ref{cor principal root existence} and \ref{lemma iso refl}.
	\end{proof}
	
 	Proposition \ref{proposition all singular isotropic} applies to the following pairs: 
	\[
	(\g\times\g,\g), \ \ \ \ (\g\l(m|n),\g\l(r|s)\times\g\l(m-r|n-s))
	\]

	\subsection{Reflections in nonisotropic, singular roots}
\begin{lemma}\label{lemma noniso noncrit reflection}
	Suppose that $\alpha\in\Sigma$ is a singular, nonisotropic root of multiplicity $(0|2n)$, and $\lambda\in\a^*$.  
	\begin{enumerate}
		\item If $\lambda(h_{\alpha})/2\notin\{0,1,\dots,n-1,n+1,\dots,2n\}$, then $V_{\Sigma}(\lambda)\cong V_{r_{\alpha}\Sigma}(\lambda-2n\alpha)$;
		\item if $k:=\lambda(h_{\alpha})/2\in\{0,1,\dots,n-1\}$ then 
		\[
		V_{\Sigma}(\lambda)\cong V_{r_{\alpha}\Sigma}(\lambda-2k\alpha);
		\]
		\item if $\lambda(h_{\alpha})/2=n+k\in\{n+1,n+2,\dots,2n\}$, then $V_{\Sigma}(\lambda)$ contains $V_{\Sigma}(\lambda-2k\alpha)$ as a submodule.  In particular, if $\lambda\in P^+_{\Sigma}$ then $\lambda-2k\alpha\in P_{\Sigma}^+$.
	\end{enumerate}
\end{lemma}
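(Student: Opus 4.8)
The plan is to reduce to the rank one pair $(\g\langle\alpha\rangle,\k\langle\alpha\rangle)$, which by Lemma \ref{lemma classification rank one}(i) is $(\o\s\p(2|2n),\o\s\p(1|2n))$ up to split $\theta$-fixed factors, and then analyze the module $M_{\{\alpha\}}(\lambda)$ explicitly. By Lemma \ref{lemma restriction Verma simple root} the $\g\langle\alpha\rangle$-submodule of $M_{\Sigma}(\lambda)$ generated by $v_{\lambda}$ is isomorphic to $M_{\{\alpha\}}(\lambda)$, and a nonzero $\k$-coinvariant on $M_{\Sigma}(\lambda)$ restricts to a nonzero $\k\langle\alpha\rangle$-coinvariant. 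So the strategy is: (a) work out the structure of $M_{\{\alpha\}}(\lambda)$ and its $\k\langle\alpha\rangle$-coinvariant over the rank one pair; (b) transport the conclusion back to $M_{\Sigma}(\lambda)$ using Lemma \ref{lemma detecting sing subspaces} exactly as in the proof of Lemma \ref{lemma iso refl} and Lemma \ref{lemma rank one red}.

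For the rank one analysis, fix a weight basis. Since $\g_{-\alpha}$ is $(0|2n)$-dimensional and purely odd, the PBW filtration of $M_{\{\alpha\}}(\lambda)=\UU(\g_{-\alpha})\otimes\Bbbk_\lambda$ is a finite exterior-type module: $M_{\{\alpha\}}(\lambda)=\bigwedge(\g_{-\alpha})\otimes v_\lambda$, graded by the number of $\g_{-\alpha}$-factors, with the top piece in degree $2n$. The weight drops by $\alpha$ at each step, so the vector of weight $\lambda-2k\alpha$ lives in (a subquotient of) $\bigwedge^{2k}(\g_{-\alpha})$. The key computation is to identify, for each $k$, whether there is a $\p_{r_\alpha\Sigma}$-singular vector of weight $\lambda-2k\alpha$, i.e.\ a vector killed by $\g_\alpha$, and whether the $\k\langle\alpha\rangle$-coinvariant vanishes on the submodule it generates. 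One expects — this is the content mirroring the first two cases — that $\g_\alpha$ acts as a contraction against the invariant form on $\g_{-\alpha}$, so that the submodule $W_k$ of $M_{\{\alpha\}}(\lambda)$ generated by the ``primitive'' elements of $\bigwedge^{2k}(\g_{-\alpha})$ is the kernel of the natural pairing pattern, and the relevant eigenvalue of the relevant Casimir-type operator (the one measuring $\lambda(h_\alpha)$) causes a singular vector of weight $\lambda - 2k\alpha$ to appear precisely when $\lambda(h_\alpha)/2 = n+k$ with $k\in\{1,\dots,n\}$. For part (3) specifically: when $\lambda(h_\alpha)/2 = n+k$, exhibit an explicit vector $w \in M_{\{\alpha\}}(\lambda)$ of weight $\lambda - 2k\alpha$ which is $\g_\alpha$-killed (hence $\n_{r_\alpha\Sigma}$-killed, as the other simple root spaces commute with it by Remark \ref{remark presentation}) and on which the $\k\langle\alpha\rangle$-coinvariant $\varphi$ does \emph{not} vanish (because $w\notin \k\langle\alpha\rangle\,\UU\k\langle\alpha\rangle\, v_\lambda$, by the weight-multiplicity bookkeeping in $\bigwedge^{2k}(\g_{-\alpha})$ — the argument here is parallel to Lemma \ref{lemma iwasawa parabolic verma}). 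Then $\UU\g\langle\alpha\rangle\cdot w \cong M_{\{\alpha\}}(\lambda - 2k\alpha)$ maps to $M_{\Sigma}(\lambda)$, and by Lemma \ref{lemma detecting sing subspaces} applied to the complement of $\UU\g\langle\alpha\rangle\cdot w$ in the appropriate sense, the $\k$-coinvariant on $M_\Sigma(\lambda)$ survives on $\UU\g\cdot w$; pushing this to the minimal quotient yields an inclusion $V_{\Sigma}(\lambda - 2k\alpha)\hookrightarrow V_{\Sigma}(\lambda)$. The final sentence of (3) is then immediate: if $\lambda \in P^+_\Sigma$ then $V_\Sigma(\lambda)$ is finite-dimensional by Lemma \ref{lemma integrability}, hence so is its submodule $V_\Sigma(\lambda - 2k\alpha)$, giving $\lambda - 2k\alpha \in P^+_\Sigma$.

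The main obstacle is the explicit rank one representation theory of $(\o\s\p(2|2n),\o\s\p(1|2n))$: one must pin down exactly when the $\k\langle\alpha\rangle$-coinvariant vanishes on a given graded piece of $M_{\{\alpha\}}(\lambda)$, and this requires understanding how $\k\langle\alpha\rangle$ sits inside $\UU(\g_{-\alpha})\otimes v_\lambda$ and computing the action of $\g_\alpha$ on $\bigwedge^{2k}(\g_{-\alpha})\otimes v_\lambda$ as a function of $\lambda(h_\alpha)$. Concretely this reduces to an $\s\l(2)$-type (or $\o\s\p$-type) calculation on a string of weight spaces, where the eigenvalue of $[e,f]$-type operators on the degree-$k$ piece is a quadratic in $k$ that vanishes exactly at $k$ with $\lambda(h_\alpha)/2 = n+k$; verifying this vanishing pattern, and that it is compatible across all three cases of the lemma, is where the real work lies. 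The rest — transporting from $\g\langle\alpha\rangle$ to $\g$ via Lemmas \ref{lemma detecting sing subspaces} and \ref{lemma restriction Verma simple root}, and the finite-dimensionality deduction — is formal and follows the template already established in the proofs of Lemmas \ref{lemma iso refl} and \ref{lemma rank one red}.
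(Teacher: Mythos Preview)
Your approach is essentially the same as the paper's: reduce to the rank one pair $(\o\s\p(2|2n),\o\s\p(1|2n))$, analyze $M_{\{\alpha\}}(\lambda)$ explicitly, and transport back via Lemma~\ref{lemma detecting sing subspaces}. Two points of comparison are worth noting. First, the paper makes the key vector explicit: since $\g_{-\alpha}$ is a purely odd irreducible $\m$-module carrying an $\m$-invariant symplectic form $\omega\in\Lambda^2\g_{-\alpha}$, the relevant $\m$-fixed vectors are $\omega^j v_\lambda$ for $0\leq j\leq n$; in case~(1) the top power $\omega^n v_\lambda$ generates $M_\Sigma(\lambda)$ and is $r_\alpha\Sigma$-highest, while in cases~(2) and~(3) the vector $\omega^k v_\lambda$ plays the analogous role. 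The paper does not carry out the coinvariant computations you describe in-line but instead cites \cite{Sh2}, Section~10, for them --- so your ``main obstacle'' is real, and the paper simply defers it.

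Second, there is a slip in your case~(3): you write that $w$ is $\g_\alpha$-killed ``hence $\n_{r_\alpha\Sigma}$-killed,'' but $-\alpha\in r_\alpha\Sigma$, so $\n_{r_\alpha\Sigma}^+$ contains $\g_{-\alpha}$, not $\g_\alpha$. What you actually want (and what the lemma asserts) is that $\omega^k v_\lambda$ is $\q_\Sigma$-singular, giving a map $M_\Sigma(\lambda-2k\alpha)\to M_\Sigma(\lambda)$ and hence the inclusion $V_\Sigma(\lambda-2k\alpha)\hookrightarrow V_\Sigma(\lambda)$; the base $r_\alpha\Sigma$ does not enter in case~(3). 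Your final deduction of $\lambda-2k\alpha\in P_\Sigma^+$ is correct.
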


\begin{proof}
	Observe that $\g_{\alpha}$ will be an irreducible, purely odd, $\m$-module with a symplectic form $\omega\in\Lambda^2\g_{-\alpha}$.  Thus $\omega^n\in\Lambda^{2n}\g_{-\alpha}=\Lambda^{top}\g_{-\alpha}$ is non-zero.
	
	In case (1), $\lambda$ will be a typical weight for $\o\s\p(2|2n)\sub\g\langle\alpha\rangle$.  Thus $\omega^nv_{\lambda}$ will generate $M_{\Sigma}(\lambda)$, is of weight $\lambda-2n\alpha$, and is annihilated by $\m$ and $\g_{-\alpha}$.  Hence we obtain a map $M_{r_{\alpha}\Sigma}(\lambda-2n\alpha)\to M_{\Sigma}(\lambda)$, and it is easy to see it is an isomorphism.  In this way, we obtain an isomorphism $V_{\Sigma}(\lambda)\cong V_{r_{\alpha}\Sigma}(\lambda-2n\alpha)$, as desired.
	
	In case (2), we use the computations in Sec.~10 of \cite{Sh2} to see that $\UU\g\langle\alpha\rangle v_{\lambda}\sub M_{\Sigma}(\lambda)$ contains a $\q_{\Sigma}$-stable subspace $W$ on which the $\k$-coinvariant vanishes. Write $M'$ for the the quotient of $M_{\Sigma}(\lambda)$ by the submodule generated by $W$, and write $v_{\lambda}'$ for the image of $v_{\lambda}$ in $M'$.  Then by construction, $\UU\g\langle\alpha\rangle v_{\lambda}'$ is an irreducible $\UU\g\langle\alpha\rangle$-submodule of $M'$ with lowest weight vector $\omega^kv_{\lambda}$ (again applying computations in \cite{Sh2}).  Since $\omega^kv_{\lambda}$ will be a highest weight vector with respect to $r_{\alpha}\Sigma$, we obtain a surjective map $M_{r_{\alpha}\Sigma}(\lambda-2k\alpha)\to M'$, from which it easily follows that $V_{\Sigma}(\lambda)\cong V_{r_{\alpha}\Sigma}(\lambda-2k\alpha)$.
	
	Finally, for case (3), the computations of Sec.~10 in \cite{Sh2} once again show that $\omega^kv_{\lambda}$ is an $\m$-fixed, $\p$-singular vector on which the $\k$-coinvariant does not vanish. Therefore we get a map $V_{\Sigma}(\lambda-k\alpha)\to V_{\Sigma}(\lambda)$, and injectivity is by definition of $V_{\Sigma}(\lambda-k\alpha)$.
\end{proof}

\begin{definition}
	Let $\lambda\in\a^*$ and let $\alpha\in\Sigma$ be a singular, nonisotropic root with $m_{\alpha}=(0|2n)$.  Then we say that $\lambda$ is an $\alpha$-critical weight if $\lambda(h_{\alpha})/2\in\{n+1,\dots,2n\}$.
\end{definition}
\begin{definition}\label{def sing refl non iso}
	Let $\lambda\in\a^*$, and let $\alpha\in\Sigma$ be a simple, singular, non-isotropic root.  If $\lambda$ is not $\alpha$-critical, then set:
	\[
	r_{\alpha}\lambda:=\begin{cases}
		\lambda-\lambda(h_{\alpha})\alpha & \text{ if }\lambda(h_{\alpha})/2\in\{0,\dots,n-1\};\\
		\lambda-2n\alpha & \text{ otherwise}.
	\end{cases}
	\]
	In particular, by Lemma \ref{lemma noniso noncrit reflection}, $V_{\Sigma}(\lambda)\cong V_{r_{\alpha}\Sigma}(r_{\alpha}\lambda)$ for such $\lambda$.
\end{definition}

Using singular reflections, it is clear that we can understand when a weight $\lambda\in\a^*$ lies in $P_{\Sigma}^+$ if no reflection of $\lambda$ is critical for simple, singular, non-isotropic roots.  For weights that are critical for some singular, nonisotropic $\alpha\in\Sigma$, we may use the following to gain some traction.

\begin{lemma}\label{lemma refl noniso crit}
	Let $\alpha\in\Sigma$ be a nonisotropic, singular root, and let $\beta$ be a simple regular root of $r_{\alpha}\Sigma$. Then for an $\alpha$-critical weight $\lambda\in\a^*$,  $V_{\Sigma}(\lambda)$ is $\beta$-integrable if and only if both $V_{r_{\alpha}\Sigma}(\lambda)$ and $V_{r_{\alpha}\Sigma}(\lambda-2n\alpha)$ are.
\end{lemma}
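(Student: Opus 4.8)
The plan is to exploit the submodule structure from Lemma~\ref{lemma noniso noncrit reflection}(3) together with the description of $r_\alpha\Sigma$-highest weight modules. Since $\lambda$ is $\alpha$-critical, write $\lambda(h_\alpha)/2=n+k$ with $k\in\{1,\dots,n\}$, and recall from the proof of Lemma~\ref{lemma noniso noncrit reflection}(3) that $\omega^k v_\lambda$ is an $\m$-fixed, $\p$-singular vector of weight $\lambda-2k\alpha$ on which the $\k$-coinvariant does not vanish, so that $V_{\Sigma}(\lambda-2k\alpha)\hookrightarrow V_{\Sigma}(\lambda)$. The first step is to identify, inside $\UU\g\langle\alpha\rangle v_\lambda\subset M_\Sigma(\lambda)$, the $r_\alpha\Sigma$-highest weight vectors: using the computations of Sec.~10 of \cite{Sh2} for $\o\s\p(2|2n)\subset\g\langle\alpha\rangle$, the image of $v_\lambda$ in $V_\Sigma(\lambda)$ generates a $\g\langle\alpha\rangle$-module whose composition factors, as an $r_\alpha\Sigma$-highest weight picture, are $V_{r_\alpha\Sigma}(\lambda)$ (from $v_\lambda$ itself, now a lowest weight vector for the $\o\s\p(2|2n)$-action relative to $r_\alpha\Sigma$) and $V_{r_\alpha\Sigma}(\lambda-2n\alpha)$ (from $\omega^n v_\lambda$, which survives in $V_\Sigma(\lambda)$ precisely because $k\le n$ forces $\omega^n v_\lambda\notin\UU\g\cdot W$ where $W$ is the vanishing subspace of the coinvariant).

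Granting this, the second step is the reduction of $\beta$-integrability to a statement about these two constituents. Since $\beta$ is a simple root of $r_\alpha\Sigma$ distinct from $-\alpha$, Remark~\ref{remark presentation} applied to $r_\alpha\Sigma$ gives $[\g_{-\beta},\g_{-\alpha}]=0$, so $\g_\beta$ and $\g_{-\beta}$ commute with the $\g\langle\alpha\rangle$-action up to lower-weight corrections controlled by $\c(\a)$; more precisely, $\g_{\pm\beta}$ act locally nilpotently on $\UU\g\langle\alpha\rangle v_\lambda'$ (where $v_\lambda'$ is the image of $v_\lambda$ in $V_\Sigma(\lambda)$) if and only if they do so on each $\g\langle\alpha\rangle$-generator of a filtration with subquotients the $V_{r_\alpha\Sigma}$-modules above. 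Concretely, $V_\Sigma(\lambda)$ is $\beta$-integrable iff $\g_\beta$ acts locally nilpotently on its $r_\alpha\Sigma$-highest weight vector (Lemma~\ref{lemma integrability}, condition (3), applied with the base $r_\alpha\Sigma$, noting singular reflections preserve $\Pi$ by Lemma~\ref{lemma sing refl preserves prin roots}), and $V_\Sigma(\lambda)$ as an $r_\alpha\Sigma$-highest weight module has its highest weight vectors accounted for by $\lambda$ and $\lambda-2n\alpha$. Thus $V_\Sigma(\lambda)$ is $\beta$-integrable iff $\g_\beta$ acts locally nilpotently on the highest weight vectors of both $V_{r_\alpha\Sigma}(\lambda)$ and $V_{r_\alpha\Sigma}(\lambda-2n\alpha)$, which by definition of these modules and Lemma~\ref{lemma integrability} is exactly the statement that both are $\beta$-integrable.

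The third step is to pin down the "only if" direction carefully, i.e.\ that $\beta$-integrability of $V_\Sigma(\lambda)$ forces $\beta$-integrability of \emph{both} pieces, not just one. For $V_{r_\alpha\Sigma}(\lambda-2n\alpha)$: it embeds in $V_\Sigma(\lambda)$ as $\UU\g\cdot(\omega^n v_\lambda')$, a $\g$-submodule, so $\beta$-integrability is inherited by submodules. For $V_{r_\alpha\Sigma}(\lambda)$: it is a quotient of $V_\Sigma(\lambda)$ by the submodule generated by $\omega v_\lambda'$ (the $r_\alpha\Sigma$-analogue of passing to the minimal $\k$-coinvariant-admitting quotient with highest weight $\lambda$ relative to $r_\alpha\Sigma$), and $\beta$-integrability passes to quotients. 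The main obstacle I anticipate is the first step — justifying precisely that the only $r_\alpha\Sigma$-highest weight constituents of $V_\Sigma(\lambda)$ relevant to $\g_\beta$-local nilpotency are the two named ones, rather than some further subquotient of $\UU\g\langle\alpha\rangle v_\lambda'$ that could fail to be $\beta$-integrable independently. This requires knowing the $\o\s\p(2|2n)$-module $\UU\g\langle\alpha\rangle v_\lambda'$ has exactly two "tops" for the opposite Borel, which is exactly what the rank-one computations in Sec.~10 of \cite{Sh2} provide; everything else is a standard local-nilpotency-respects-subquotients argument combined with the commutation $[\g_{-\beta},\g_{-\alpha}]=0$.
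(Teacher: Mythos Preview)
Your structural approach has a genuine gap that the paper's argument avoids entirely. The central problem is your claim that $V_{r_\alpha\Sigma}(\lambda)$ arises as a quotient of $V_\Sigma(\lambda)$. This cannot work: the only vector of weight $\lambda$ in $V_\Sigma(\lambda)$ is $v_\lambda'$, and $v_\lambda'$ is \emph{not} annihilated by $\g_{-\alpha}\subset\n_{r_\alpha\Sigma}^+$, so there is no $\q_{r_\alpha\Sigma}$-eigenvector of weight $\lambda$ in $V_\Sigma(\lambda)$ and hence no map from $M_{r_\alpha\Sigma}(\lambda)$ into (or onto a quotient of) $V_\Sigma(\lambda)$. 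Your proposed fix---quotienting by $\UU\g\cdot(\omega v_\lambda')$---also fails: for $k>1$ the vector $\omega v_\lambda'$ does not lie in the socle $L_{\lambda-2k\alpha,\alpha}$ of the rank-one Kac module, so the $\g\langle\alpha\rangle$-submodule it generates is the whole Kac module, and your quotient is zero. The paper's explicit caution (``we do \emph{not} have an isomorphism $V_\Sigma(\lambda)\cong V_{r_\alpha\Sigma}(\lambda)$'') is pointing at exactly this.

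The paper proceeds quite differently and the key idea is one you are missing: a \emph{linearity} argument. Rather than looking for two subquotients, the paper observes that the images of all the $\m$-invariants $\omega^j v_{\lambda,\alpha}$ for $j=0,\dots,n$ generate $V_\Sigma(\lambda)$ over the relevant parabolic, so $\beta$-integrability is equivalent to $(\g_{-\beta})^N\omega^j v_{\lambda,\alpha}'=0$ for each $j$. By Theorem~\ref{lemma integrability rank one} applied with the base $r_\alpha\Sigma$ (each $\omega^j v_{\lambda,\alpha}'$ is $\m$-fixed and killed by $\g_\beta$), this becomes the numerical condition $(\lambda-2j\alpha)(h_\beta)\in 2^{\varepsilon(\beta)}\cdot 2\Z_{\ge0}$ for every $j=0,\dots,n$. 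Since $j\mapsto(\lambda-2j\alpha)(h_\beta)$ is affine with step $-2\alpha(h_\beta)\in 2^{\varepsilon(\beta)}\cdot 2\Z$ (Lemma~\ref{lemma root on coroot}), the condition for all $j$ reduces to the endpoints $j=0$ and $j=n$, which by Theorem~\ref{lemma integrability rank one} again are precisely the $\beta$-integrability of $V_{r_\alpha\Sigma}(\lambda)$ and $V_{r_\alpha\Sigma}(\lambda-2n\alpha)$. No sub/quotient relation between $V_\Sigma(\lambda)$ and $V_{r_\alpha\Sigma}(\lambda)$ is ever invoked; the modules $V_{r_\alpha\Sigma}(\lambda)$ and $V_{r_\alpha\Sigma}(\lambda-2n\alpha)$ enter only as bookkeeping for the two endpoint numerical conditions.
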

\textbf{Caution}: we do \emph{not} have an isomorphism $V_{\Sigma}(\lambda)\cong V_{r_{\alpha}\Sigma}(\lambda)$.

\begin{proof}	
	Let $\p_{\alpha,\Sigma}$ be the parabolic subalgebra of $\g$ containing both $\q_{\Sigma}$ and $\g_{-\alpha}$.  Let $V_{\lambda,\alpha}$ be the finite-dimensional Kac-module over $\g\langle\alpha\rangle\cong\o\s\p(2|2n)\times\a'\times(...)$ of highest weight $\lambda$, and write $v_{\lambda,\alpha}$ for its highest weight vector. Note that $V_{\lambda,\alpha}$ is indecomposable with composition series $0\to L_{\lambda-2k,\alpha}\to V_{\lambda,\alpha}\to L_{\lambda,\alpha}\to0$, where $L_{(-),\alpha}$ is the corresponding simple module over $\g\langle\alpha\rangle$.  
		
		Then we may inflate $V_{\lambda,\alpha}$ to $\p_{\alpha,\Sigma}$.  Observe that $V_{\Sigma}(\lambda)$ is a quotient of $\Ind_{\p_{\alpha,\Sigma}}^{\g}V_{\lambda,\alpha}$.  Write $v_{\lambda,\alpha}'$ for the image of $v_{\lambda,\alpha}$ in $V_{\Sigma}(\lambda)$.  As a module over $\p_{r_{\alpha}\Sigma}$, $V_{\lambda,\alpha}$ is generated by $V_{\lambda,\alpha}^{\m}=\Bbbk\langle v_{\lambda,\alpha},\omega v_{\lambda,\alpha},\cdots,\omega^{n}v_{\lambda,\alpha}\rangle$.   Thus $V_{\Sigma}(\lambda)$ is $\beta$-integrable if and only if each vector $\omega^jv_{\lambda,\alpha}'$ is $\beta$-integrable, i.e.~$(\g_{-\beta})^N\omega^jv_{\lambda,\alpha}'=0$ for $N\gg0$.  However, by Theorem \ref{lemma integrability rank one}, this is in turn is equivalent to:
		\[
		(\lambda-2j\alpha)(h_{\beta})\in2^{\varepsilon(\beta)}\cdot2\Z_{\geq0} \ \text{ for all }j=0,\dots,n.
		\]
		Since evaluation at $h_{\beta}$ is a linear function in $j$, the result is now clear from Lemma \ref{lemma root on coroot}.
\end{proof}

We note that in principal one could use the idea of Lemma \ref{lemma refl noniso crit} to try and understand what happens after performing multiple reflections.  However, we don't see at this moment how to understand this picture in a simple way in any generality.

\section{Explicit computations of spherical weights}

In the final section, we explicitly describe the sets $P_{\Sigma}^+$ for convenient choices of $\Sigma$ when $(\g,\k)$ is a supersymmetric pair with $\g$ indecomposable and $\g_{\ol{1}}\neq0$.  In the problematic cases, i.e.~those for which nonisotropic singular roots are present, we choose $\Sigma$ so that all but the last necessary singular reflection will have critical weights, allowing us to rely on Lemma \ref{lemma refl noniso crit}.

We stress that we have made choices of seemingly convenient bases $\Sigma$. However the techniques we employ have the potential to work for other bases as well. If one is interested in a description of $P_{\Sigma}^+$ for a $\Sigma$ not used here, one may attempt to apply our techniques in their case.  In particular, our technique will work when $\Sigma$ has the property that every principal root either lies in $\Sigma$ or lies in $r_{\alpha}\Sigma$ for some singular root $\alpha\in\Sigma$.  Such $\Sigma$ can often be constructed by making them consists of as many singular roots as possible.

We begin by working abstractly with restricted root systems, for simplicity.

\begin{example}\label{example BC}
	Let $r,s\in\Z_{\geq0}$, $k\in\Bbbk^\times$, and define $BC_{k}(r,s)$ to be the weak generalized root system with two regular components $\Delta_{reg}=BC_r\sqcup BC_s$ and singular roots $\Delta_{sing}=W(\omega_1^{(1)}+\omega_1^{(2)})$.  Here $\omega_i^{(j)}$ denotes the $i$th fundamental weight of the $j$th irreducible component of $\Delta_{reg}$, as in \cite{S2}.  The bilinear form is normalized such that $(\omega_1^{(1)},\omega_1^{(1)})=1$ and $(\omega_{1}^{(2)},\omega_{1}^{(2)})=k$.  Define $C_k(r,s)\sub BC_k(r,s)$ to be the same as $BC_k(r,s)$ but without the short roots.  
	
	Let us write $\gamma_1,\dots,\gamma_r,\nu_1,\dots,\nu_s$ for a basis of the underlying vector space given by mutually orthogonal short roots, where $\gamma_1,\dots,\gamma_r\in BC_r$ and $\nu_1,\dots,\nu_s\in BC_s$.  Then one base $\Sigma$ for $BC_{k}(r,s)$ is given by:
	\[
	\gamma_1-\gamma_2,\dots,\gamma_{r-1}-\gamma_r,\gamma_r-\nu_1,\dots,\nu_{s-1}-\nu_s,\nu_s.
	\]
	A set of principal roots is given by $\Pi=\{	\gamma_1-\gamma_2,\dots,\gamma_{r-1}-\gamma_r,\gamma_r,\nu_1-\nu_2,\dots,\nu_{s-1}-\nu_s,\nu_s\}$.  Therefore we have that $\{\gamma_r\}=\Pi\setminus(\Pi\cap\Sigma)$. 
	
	For $C_k(r,s)$, we may take the same base and principal roots only we need to multiply $\nu_s$ and $\gamma_r$ by $2$.  However this minor difference does not affect any of the computations of $P_{\Sigma}^+$ below.
	
	Let $\lambda=\sum\limits_{i}a_i\gamma_i+\sum\limits_jb_j\nu_j$.  Then a necessary condition for $\lambda\in P_{\Sigma}^+$ is that $\lambda(h_{\alpha})\in 2^{\varepsilon(\alpha)}\cdot 2\Z_{\geq0}$ for all $\alpha\in\Pi$, i.e.
	\begin{equation}\label{eqn numeric assumptions}
	a_i-a_{i+1},a_r,b_{i}-b_{i+1},b_s\in2\Z_{\geq0}.
	\end{equation}
	In particular we assume (\ref{eqn numeric assumptions}).  To obtain $\gamma_r$ as a simple root, we must apply the singular reflections $r_{\gamma_r-\nu_1},\dots,r_{\gamma_r-\nu_s}$.  We see that for $k\neq-1$:
	\[
	\lambda(h_{\gamma_r-\nu_1})/2=\frac{a_r-kb_1}{1+k}.
	\]
		
	\textbf{Case I, $k=-1$:}  In this case all singular roots are isotropic, so in fact Proposition \ref{proposition all singular isotropic} applies.  We see that $(\lambda,\gamma_r-\nu_1)=a_r+b_1$, which is non-negative, and zero if and only if $a_r=b_1=0$.  Thus either $b_i=0$ for all $i$ in which case applying odd reflections show that $\lambda\in P_{\Sigma}^+$, or $b_1\neq0$ so that $r_{\gamma_r-\nu_1}\lambda=\lambda-2\gamma_r+2\nu_1$, meaning we must have $a_r\geq 2$.  If $b_2=0$ then again it is clear that $\lambda\in P_{\Sigma}^+$, and otherwise we need $a_{r}\geq4$. Continuing like this, we learn that 
\[
\lambda\in P_{\Sigma}^+\iff a_r/2\geq|\{i:b_i\neq0\}|.
\]
	
	\textbf{Case II, $k=-1/2$, $m_{\gamma_r-\nu_1}=(0|2)$:}  We have $\gamma_i-\nu_j$ is a non-isotropic singular root.  Observe that $\lambda(h_{\gamma_{r}-\nu_1})/2=2a_r+b_1$ which is an even, non-negative integer.  If this quantity is 0, then $b_i=0$ for all $i$ and $a_r=0$, so that $\lambda\in P_{\Sigma}^+$.  
	
	If $a_r=0$ and $b_1>0$ then by Lemma \ref{lemma noniso noncrit reflection}, $\lambda\in P_{\Sigma}^+$ only if $\lambda-2\gamma_r+2\nu_1$ is integrable.  But it is clearly not since the coefficient of $\gamma_r$ would become negative. Thus $a_r=0\Rightarrow b_1=0$, and for all other values of $(a_r,b_1)$ we have $2a_r+b_1>2$. Hence there are no critical weights for $\gamma_r-\nu_1$ since $m_{\gamma_r-\nu_1}=(0|2)$.  In the case when $a_r,b_1\neq0$, we have $r_{\gamma_r-\nu_1}\lambda=\lambda-2\gamma_r+2\nu_1$, meaning we must have $a_r\geq2$.  
		
	Using inductive reasoning as in the first case, we once again find that:
	\[
	\lambda\in P_{\Sigma}^+\iff a_r/2\geq|\{i:b_i\neq0\}|.
	\]
	
	\textbf{Case III, $r=s=1$, $m_{\gamma_1-\nu_1}=(0|2)$, $k\neq -1$:} 

		Recall that 
		\[
		\lambda(h_{\gamma_1-\nu_1})/2=\frac{a_1-kb_1}{1+k}.
		\]
		If the above quantity is equal to 0, then we must have $a_1=kb_1$, implying that either $a_1=b_1=0$ or $a_1,b_1>0$.  If $\lambda(h_{\gamma_1-\nu_1})=2$, then we must have $a_1\neq 2$.  However if $a_1=0$ then $\lambda-2\gamma_1+2\nu_1$ is not dominant, so $\lambda$ can't be either by Lemma \ref{lemma noniso noncrit reflection}.  
		
		In all other cases, $r_{\alpha}\lambda=\lambda-2\gamma_1+2\nu_1$ must be integrable, meaning that $a_1\geq2$.  Therefore we obtain that:
		\[
		\lambda=a_1\gamma_1+b_1\nu_1\in P_{\Sigma}^+\iff a_1=0\Rightarrow b_1=0.
		\]
	
\end{example}

\subsection{Tables with $P_{\Sigma}^+$}\label{section tables} In Table \ref{table 1}, we describe $\Delta$ and make a choice of base $\Sigma$ for each pair. In Table \ref{table 2} which follows, we explicitly describe $P_{\Sigma}^+$ for the given choice of $\Sigma$.  We use the presentations of generalized root systems given in Sec.~5.2 of \cite{Sh}.  In the following section we will justify our computations.

\renewcommand{\arraystretch}{1.3}
\begin {table}

\begin{center}
\caption {}\label{table 1} 
\begin{tabular}{|c|c|}
	\hline
	$(\g,\k)$ & $\Delta$\\
	& $\Sigma$\\
	\hline
	$(\g\l(m|2n),\o\s\p(m|2n))$  & $A(m-1,n-1)$ \\    
	& $\epsilon_1-\epsilon_2,\dots,\epsilon_m-\nu_1,\nu_1-\nu_2,\dots,\nu_{n-1}-\nu_n$\\
	& $\nu_i:=(\delta_{2i-1}+\delta_{2i})/2$\\
	\hline
	$(\g\l(m|n),\g\l(r|s)\times\g\l(m-r|n-s))$ & $(B)C_{-1}(r,s)$\\ 
	$r\leq m/2, s\leq n/2$ & $\gamma_1-\gamma_2,\dots,\gamma_{r}-\nu_1,\nu_1-\nu_2,\dots,\nu_{s-1}-\nu_s,(2)\nu_s$\\
	& $\gamma_i:=(\epsilon_i-\epsilon_{m-i+1})/2$, $\nu_i:=(\delta_i-\delta_{n-i+1})/2$\\
	\hline 
	$(\o\s\p(2m|2n),\g\l(m|n))$ & $(B)C_{-1/2}(n,m)$\\
	& $\delta_1-\delta_2,\dots,\delta_{n}-\gamma_1,\gamma_1-\gamma_2,\dots,\gamma_{m-1}-\gamma_m,(2)\gamma_m$\\
	& $\gamma_i=(\epsilon_{2i-1}+\epsilon_{2i})/2$\\
	\hline
	$(\o\s\p(m|2n),\o\s\p(r|2s)\times\o\s\p(m-r|2n-2s))$ & $BC_{-1/2}(r,s)$ \\
	$r<m/2, s\leq n/2$                                   & $\epsilon_1-\epsilon_2,\dots,\epsilon_r-\nu_1,\nu_1-\nu_2,\dots,\nu_{s-1}-\nu_s,\nu_s$\\
	    & $\nu_i=(\delta_{2i-1}+\delta_{2i})/2$\\
	\hline
	$(\o\s\p(2r|2n),\o\s\p(r|2s)\times\o\s\p(r|2n-2s))$ & $\Delta_{reg}=D_{r}\sqcup BC_s$\\
	          $s<n/2$                                   & $\Delta_{sing}=W\omega_{1}^{(1)}$ if $s=0$, \\
	                                                    & $\Delta_{sing}=W(\omega_{1}^{(1)}+\omega_{1}^{(2)})\sqcup W\omega_{1}^{(1)}$ if $s>0$\\
	                                                    & $\nu_1-\nu_2,\dots,\nu_{s}-\epsilon_1,\dots,\epsilon_{r-1}-\epsilon_r,\epsilon_r$\\
	& $\nu_i=(\delta_{2i-1}+\delta_{2i})/2$\\
	\hline 
	$(\o\s\p(2r|4s),\o\s\p(r|2s)\times\o\s\p(r|2s))$ & $D(r,s)$\\
	& $\nu_1-\nu_2,\dots,\nu_s-\epsilon_1,\epsilon_1-\epsilon_2,\dots,\epsilon_{r-1}-\epsilon_r,\epsilon_{r-1}+\epsilon_r$ \\
	& 	$\nu_i=(\delta_{2i-1}+\delta_{2i})/2$\\
	\hline
	$(\mathfrak{d}(2,1;a),\o\s\p(2|2)\times\s\o(2))$    & $C_{a}(1,1)$, $a\neq -1$ \\
	& $\alpha-\beta, 2\beta$\\
	\hline
	$(\a\b(1|3),\s\l(1|4))$                         & $C_{-3}(1,1)$ \\
	& $\epsilon/2-\delta/2,\delta$\\
	\hline
	$(\a\b(1|3),\g\o\s\p(2|4))$                             & $\Delta_{reg}=B_2\sqcup C_1$, $\Delta_{sing}=W(\omega_{2}^{(1)}+\omega_1^{(2)})$\\
	& $\epsilon_2,(\epsilon_1-\epsilon_2-\delta)/2,\delta$\\
	\hline
	$(\a\b(1|3),\mathfrak{d}(2,1;2)\times\s\l(2))$                   & $\Delta_{reg}=B_3$, $\Delta_{sing}=W\omega_3$\\
	& $\epsilon_2-\epsilon_3, \epsilon_1-\epsilon_2, (-\epsilon_1+\epsilon_2+\epsilon_3)/2$\\
	\hline
	$(\a\g(1|2),\mathfrak{d}(2,1;3))$                   & $G_2$ \\
	& any base\\
	\hline
\end{tabular}
\end{center}
\end {table}

\begin{table}
	\caption {}\label{table 2} 
	\begin{center}

\begin{tabular}{|c|c|}
	\hline
	$(\g,\k)$ & $P_{\Sigma}^+$\\
	\hline
	$(\g\l(m|2n),\o\s\p(m|2n))$ & $a_1\epsilon_1+\dots+a_m\epsilon_m+b_1\nu_1+\dots+b_n\nu_n$ \\ 
	                            & $a_i-a_{i+1},b_{i}-b_{i+1}\in2\Z_{\geq0}$\\
	\hline
	$(\g\l(m|n),$ & $a_1\gamma_1+\dots+a_r\gamma_r+b_1\nu_1+\dots+b_s\gamma_s$\\
	$\g\l(r|s)\times\g\l(m-r|n-s))$ & $a_i-a_{i+1},a_r,b_i-b_{i+1},b_s\in 2\Z_{\geq0}, \  a_r/2\geq|\{i:b_i\neq0\}|$ \\
	$r\leq m/2$, $s\leq n/2$ & \\
	\hline 
	$(\o\s\p(2m|2n),\g\l(m|n))$ & $a_1\gamma_1+\dots+a_m\gamma_m+b_1\delta_1+\dots+b_n\delta_n$\\
	                            & $a_i-a_{i+1},a_m,b_i-b_{i+1},b_n\in 2\Z_{\geq0}, \  b_n/2\geq|\{i:a_i\neq0\}|$\\
	\hline
	$(\o\s\p(m|2n),$ & $a_1\epsilon_1+\dots+a_r\epsilon_r+b_1\nu_1+\dots+b_s\nu_s$ \\
	$\o\s\p(r|2s)\times\o\s\p(m-r|2n-2s))$             & $a_i-a_{i+1},a_r,b_i-b_{i+1},b_s\in 2\Z_{\geq0}, \  a_r/2\geq|\{i:b_i\neq0\}|$ \\
	$r<m/2, s\leq n/2$  &\\
	\hline
	$(\o\s\p(2r|2n),$                                   & $a_1\epsilon_1+\dots+a_r\epsilon_r+b_1\nu_1+\dots+b_s\nu_s$ \\
	$\o\s\p(r|2s)\times\o\s\p(r|2n-2s))$                                          & $a_i-a_{i+1},a_{r-1}+a_{r},b_i-b_{i+1},b_s\in 2\Z_{\geq0}$. \\
                                         $s<n/2$   	&  $b_{s}/2\geq|\{i<r:a_i\neq 0\}|$ and  \\
                                         				& $a_{r-1}+a_r\geq 2n-4s$ if $a_r<0$\\
	\hline
	$(\o\s\p(2r|4s),$ & $a_1\epsilon_1+\dots+a_r\epsilon_r+b_1\nu_1+\dots+b_s\nu_s$ \\
	            $\o\s\p(r|2s)\times\o\s\p(r|2s))$       & $a_i-a_{i+1},a_{r-1}+a_r,b_i-b_{i+1},b_s\in 2\Z_{\geq0}$, \\
	                                                    & $b_s/2\geq |\{i:a_i\neq0\}|$\\
	\hline
$(\mathfrak{d}(2,1;a),\o\s\p(2|2)\times\s\o(2))$    & $a\alpha+b\beta$ \\
& $a,b\in 2\Z_{\geq0}$, $a=0\Rightarrow b=0$\\
\hline
$(\a\b(1|3),\s\l(1|4))$                         & $a\epsilon+b\delta$ \\
& $a,b\in\Z_{\geq0}$ and either $a=b=0$ or $a\geq 2$\\
\hline
$(\a\b(1|3),\g\o\s\p(2|4))$                             & $a_1\epsilon_1+a_2\epsilon_2+b\delta$\\
& $a_1-a_2\in2\Z_{\geq0}$, $a_2,b\in\Z_{\geq0}$\\
& either $a_1=a_2=b=0$ or $a_1>a_2$\\
\hline
$(\a\b(1|3),\mathfrak{d}(2,1;2)\times\s\l(2))$                   & $a_1\epsilon_1+a_2\epsilon_2+a_3\epsilon_3$ \\
& $a_i-a_{i+1}\in2\Z_{\geq0}$, $a_3\in\Z_{\geq0}$\\
& and either $a_3=0$ or $a_1>a_2$ \\
\hline
$(\a\g(1|2),\mathfrak{d}(2,1;3))$                   & $a_1\omega_1+a_2\omega_2$ \\
& $a_1,a_2\in 2\Z_{\geq0}$,\\
&  $\omega_1,\omega_2$ the fundamental dominant weights.\\
\hline
\end{tabular}
	\end{center}
\end{table}

\newpage

\

\

\

\subsection{Computations of $P_{\Sigma}^+$ for supersymmetric pairs} We now justify the computations presented in the tables above by going through the pairs case by case.  We once again remind that we use the presentations of generalized root systems given in Sec.~5.2 of \cite{Sh}.

\begin{enumerate}
	\item $(\g\l(m|2n),\o\s\p(m|2n))$: For the choice of $\Sigma$ given we have $\Pi\sub\Sigma$, so we may apply Proposition \ref{prop type I} to obtain our description.
	
	\item $(\g\l(m|n),\g\l(r|s)\times\g\l(m-r|n-s))$, $r\leq m/2,$ $s\leq n/2$: In this case we obtain the restricted root system $BC_{-1}(r,s)$ if $r<n/2$ or $s<m/2$, and otherwise we get $C_{-1}(m/2,n/2)$.  Thus we may apply Case I of Example \ref{example BC} to obtain our description.

	\item $(\o\s\p(2m|2n),\g\l(m|n))$:  After renormalizing the form, we obtain $C_{-1/2}(n,m)$ if $m$ is even and $BC_{-1/2}(n,m)$ otherwise, and in both cases the multiplicities of singular roots are $2$.  Thus we may apply Case II of Example \ref{example BC} to obtain our description.
	
	\item $(\o\s\p(m|2n),\o\s\p(m-r|2n-2s)\times\o\s\p(r|2s))$, $r<m/2$, $s\leq n/2$: We obtain the restricted root system $BC_{-1/2}(r,s)$, with singular root multiplicities $2$, so we may apply Case II of Example \ref{example BC}.
	
	\item $(\o\s\p(2r|2n),\o\s\p(r|2s)\times\o\s\p(r|2n-2s))$, $s<n/2$: let us be more explicit about the restricted root system.  We have $(\epsilon_i,\epsilon_j)=\delta_{ij}$, $(\epsilon_i,\nu_j)=0$, and $(\nu_i,\nu_j)=-\delta_{ij}/2$.  The singular roots $W\epsilon_1$ have multiplicity $2n-4s$, and for $s>0$ the singular roots $W(\epsilon_1+\delta_1)$ have multiplicity $2$.  We have chosen the base:
	\[
	\nu_1-\nu_2,\dots,\nu_{s-1}-\nu_s,\nu_s-\epsilon_1,\epsilon_1-\epsilon_2,\dots,\epsilon_{r-1}-\epsilon_r,\epsilon_r.
	\]
	The principal roots are given by:
	\[
	\epsilon_1-\epsilon_2,\dots,\epsilon_{r-1}\pm\epsilon_{r},\nu_1-\nu_2,\dots,\nu_{s-1}-\nu_s,\nu_s.
	\]	
	Thus we need to apply the singular reflection $\epsilon_r$ to obtain $\epsilon_{r-1}+\epsilon_r$, and separately we need to apply the sequence of singular reflections $r_{\nu_s-\epsilon_1},r_{\nu_s-\epsilon_2},\dots,r_{\nu_s-\epsilon_{r-1}},r_{\epsilon_r}$ to obtain $\nu_s$.
	
	Let $\lambda=\sum\limits_{i=1}^{r}a_i\epsilon_i+\sum\limits_{j=1}^{s}b_j\nu_j$.  Then by integrability with respect to the even symmetric pair, we must have $a_i-a_{i+1},a_{r-1}+a_{r},b_i-b_{i+1},b_{s}\in 2\Z_{\geq0}$.  We see that $\lambda(h_{\epsilon_r})/2=a_r$, from which we deduce that if $a_r<0$ we must have $a_{r-1}\geq-(a_r-(2n-4s))$, or equivalently $a_{r-1}+a_{r}\geq2n-4s$.
	
	On the other hand, applying the singular reflections $r_{\nu_s-\epsilon_1},r_{\nu_s-\epsilon_2},\dots,r_{\nu_s-\epsilon_{r-1}}$, we see that we never encounter critical weights, and we obtain the condition $b_s/2\geq |\{i<r:a_i\neq0\}|$.  After applying these singular reflections, the coefficient of $\epsilon_{r-1}$ is either 0 or $a_{r-1}+2$.  Thus our condition that $a_{r-1}+a_r\geq2n-4s$ if $a_r<0$ will still hold for these new coefficients, meaning this is sufficient.
	
	\item $(\o\s\p(2r|4s),\o\s\p(r|2s)\times\o\s\p(r|2s))$: we take the base $\nu_1-\nu_2,\dots,\nu_{s-1}-\nu_s,\nu_s-\epsilon_1,\dots,\epsilon_{r-1}\pm\epsilon_r$, and the principal roots 
	\[
	\epsilon_1-\epsilon_2,\dots,\epsilon_{r-1}\pm\epsilon_{r}, \nu_1-\nu_2,\dots,\nu_{s-1}-\nu_s,2\nu_s.
	\] 
	The singular roots $W(\epsilon_1-\delta_1)$ have multiplicity 2.  Then we need to apply the simple reflections $r_{\nu_s-\epsilon_1},r_{\nu_s-\epsilon_2},\dots,r_{\nu_s-\epsilon_r}$ to obtain $2\nu_s$.  Let $\lambda=\sum\limits_jb_j\nu_j+\sum\limits_ia_i\epsilon_i$ with the necessary conditions $a_i-a_{i+1},a_{r-1}+a_{r},b_i-b_{i+1},b_{s}\in 2\Z_{\geq0}$.  No weights are critical when applying the first $r-1$ singular reflections.  If $b_s=2k$ with $k<r-1$, we are forced to have $a_{k+1}=\dots=a_r=0$, and we will have $\lambda\in P_{\Sigma}^+$.  
	
	Suppose instead that $b_s=2k$ with $k\geq r-1$.  Then 
	\begin{eqnarray*}
	\lambda'& := &r_{\nu_s-\epsilon_{r-1}}\dots r_{\nu_s-\epsilon_1}\lambda \\        
	        & =  &b_1\nu_1+\dots+b_{s-1}\nu_{s-1}+(b_s-2(r-1))\nu_s+(a_1+2)\epsilon_1+\dots+(a_{r-1}+2)\epsilon_{r-1}+a_r\epsilon_r.
	\end{eqnarray*}
	We see that
	\[
	\lambda'(h_{\nu_s-\epsilon_r})=-(b_s-2(r-1))-2a_r.
	\]
	If this quantity is $0$, then $\lambda\in P_{\Sigma}^+$.  Otherwise, the following weight must be integrable:
	\[
	b_1\nu_1+\dots+b_{s-1}\nu_{s-1}+(b_s-2r)\nu_s+(a_1+2)\epsilon_1+\dots+(a_{r-1}+2)\epsilon_{r-1}+(a_r+2)\epsilon_r,
	\]
	meaning we need $b_s\geq 2r$.

	\item $(\mathfrak{d}(2,1;a),\o\s\p(2|2)\times\s\o(2))$: Here we obtain restricted root system $C_a(1,1)$ with $a\neq -1$, and where nonisotropic singular roots have multiplicity $2$.  Thus we may apply Case III of Example \ref{example BC}.
	
	\item $(\mathfrak{ab}(1|3),\s\l(1|4))$: this case is equivalent, after globally rescaling the form, to $C_{-3}(1,1)$ deformed: $\a^*$ has basis $\epsilon,\delta$ where $(\epsilon,\epsilon)=1/3$, $(\delta,\epsilon)=0$, $(\delta,\delta)=-1$.  Singular roots have multiplicity $4$ here. Then $\Sigma$ can be taken as $(\epsilon-\delta)/2,\delta$ and $\Pi=\{\epsilon,\delta\}$.  We have $r_{(\epsilon-\delta)/2}\Sigma=\{(\delta-\epsilon)/2,\epsilon\}$.  Any nonzero $\lambda=a\epsilon+b\delta$ with $a,b\geq0$ will have $\lambda(h_{(\epsilon-\delta)/2})<0$, so we do not have any critical weights.  Thus $\lambda\in P_{\Sigma}^+$ if and only if $a,b\in\Z$ and either $a=b=0$ or $a\geq 2$.
	
	\item $(\mathfrak{ab}(1|3),\g\o\s\p(2|4))$: $\Delta_{reg}=B_2\sqcup C_1$ with deformed bilinear form: $\a^*$ has basis $\epsilon_1,\epsilon_2,\delta$ where $(\epsilon_i,\epsilon_j)=\delta_{ij}/3$, $(\epsilon_i,\delta)=0$, and $(\delta,\delta)=-1$.  Singular roots have multiplicity $2$. We take $\Sigma$ to be $\epsilon_2,(\epsilon_1-\epsilon_2-\delta)/2,\delta$, and we have $\Pi=\{\epsilon_1-\epsilon_2,\epsilon_2,\delta\}$.  Then $r_{(\epsilon_1-\epsilon_2-\delta)/2}\Sigma$ is $(\epsilon_1+\epsilon_2-\delta)/2,(-\epsilon_1+\epsilon_2+\delta)/2,\epsilon_1-\epsilon_2$.  
	
	Now if $\lambda=a_1\epsilon_1+a_2\epsilon_2+b\delta$, then $\lambda(h_{(\epsilon_1-\epsilon_2-\delta)/2})<0$ whenever $a_1,a_2,b\geq0$ and any one of them is positive.  So we avoid any critical weights again.  From this we compute that $\lambda\in P_{\Sigma}^+$ if and only if $a_2,b\in\Z_{\geq0}$, $a_1-a_2\in2\Z_{\geq0}$, and either $a_{1}=a_{2}=b=0$ or $a_{1}>a_2$.
	
	\item $(\mathfrak{ab}(1|3),\mathfrak{d}(2,1;2)\times\s\l(2))$: $B_3$ with small orbit: $\a^*$ has basis $\epsilon_1,\epsilon_2,\epsilon_3$, where $(\epsilon_i,\epsilon_j)=\delta_{ij}/3$.  Singular roots have multiplicity $2$ in this case. For $\Sigma$ we take $\epsilon_2-\epsilon_3,\epsilon_1-\epsilon_2,(-\epsilon_1+\epsilon_2+\epsilon_3)/2$, and $\Pi=\{\epsilon_1-\epsilon_2,\epsilon_2-\epsilon_3,\epsilon_3\}$.  Then we have $r_{(-\epsilon_1+\epsilon_2+\epsilon_3)/2}\Sigma$ is given by $\epsilon_2-\epsilon_3,\epsilon_3,(\epsilon_1-\epsilon_2-\epsilon_3)/2$.
	
	Let $\lambda=a_1\epsilon_1+a_2\epsilon_2+a_3\epsilon_3\in\a^*$.  Then a necessary condition that $\lambda\in P_{\Sigma}^+$ is that $a_1-a_2,a_2-a_3\in2\Z_{\geq0}$, $a_3\in\Z_{\geq0}$.  We see that
	\[
	\lambda(h_{(-\epsilon_1+\epsilon_2+\epsilon_3)/2})/2=\frac{2}{3}(-a_1+a_2+a_3).
	\] 
	Thus any weight $\lambda=(a+b)\epsilon_1+a\epsilon_2+b\epsilon_3$ such that $a,b,a-b\in 2\Z_{\geq0}$ will lie in $P_{\Sigma}^+$.  Otherwise, we need that $a_1>a_2$.  This is equivalent to the condition given in the table.
	
	\item $(\mathfrak{ag}(1|2),\mathfrak{d}(2,1;3))$: $G_2$: In this case the restricted root system is just $G_2$, and all simple roots are regular. We present $\a^*$ with basis $\nu_1,\nu_2$, the fundamental weights for $G_2$.  Then for $\lambda=a_1\nu_1+a_2\nu_2\in\a^*$, we have $\lambda\in P_{\Sigma}^+$ if and only if $a_1-a_2,a_2\in 2\Z_{\geq0}$.

\end{enumerate}

	\bibliographystyle{amsalpha}

\begin{thebibliography}{99999999}
	
	\bibitem{AS} A.~Alldridge and S.~Schmittner, \emph{Spherical representations of Lie supergroups}. J.~Funct.~Anal., 268.6 (2015): 1403-1453.
	
	
	
	
	\bibitem{C} K.~Coulembier, \emph{The Orthosymplectic Superalgebra in Harmonic Analysis}.  J.~Lie Theory, Vol.~23 (2013): 55–83.
	
	\bibitem{D} J.~Dixmier, \emph{Enveloping algebras}. No.~11, Amer.~Math.~Soc.~(1996).
	
	\bibitem{H1} S.~Helgason, \emph{A duality for symmetric spaces with applications to group representations}. Adv.~Math., 5.1 (1970): 1-154.
	
	\bibitem{H2} S.~Helgason, \emph{Differential geometry and symmetric spaces}. Amer.~Math.~Soc., Vol.~341 (2001).
	
	\bibitem{LS} R.Lávička and D.~Šmíd, \emph{Fischer decomposition for polynomials on superspace}. J.~Math.~Phys., 56.11 (2015).
	
	\bibitem{L} J.~Lepowsky, \emph{Generalized Verma modules, the Cartan-Helgason theorem, and the Harish-Chandra homomorphism}. J.~Algebra, 49.2 (1977): 470-495.
	
	\bibitem{MT} A. Masuoka and Y. Takahashi, \emph{Geometric construction of quotients $G/H$ in supersymmetry}.  Transform.~Groups, 26.1 (2021): 347–375.
	
	\bibitem{RSS} Reif, Shifra, Siddhartha Sahi, and Vera Serganova. \emph{Restriction Theorems and Root Systems for Symmetric Superspaces}.  Indag.~Math.~(2024), https://doi.org/10.1016/j.indag.2024.09.006.
	
	\bibitem{SSS} S.~Sahi, H.~Salmasian, and V.~Serganova, \emph{The Capelli eigenvalue problem for Lie superalgebras}. Math.~Z., 294.1-2 (2020): 359-395.
	
	\bibitem{SSS2} S.~Sahi, H.~Salmasian, and V.~Serganova, \emph{Capelli operators for spherical superharmonics and the Dougall–Ramanujan identity}. Transform.~Groups, 27.4 (2022): 1475-1514.
	
	\bibitem{SaSo} S.~Sahi, and S.~Zhu, \emph{Supersymmetric Shimura operators and interpolation polynomials}. Preprint arXiv:2312.08661 (2023).
	
	\bibitem{SRFL}  A.~Schnyder, S.~Ryu, A.~Furusaki and A.~Ludwig, \emph{Classification of topological
	insulators and superconductors}. AIP Conf.~Proc., 1134 (2009): 10–21.
	
	\bibitem{SeV} A.~Sergeev and A.~Veselov, \emph{Deformed quantum Calogero-Moser problems and Lie superalgebras}. Comm.~Math.~Phys., 245 (2004): 249-278.
	
	\bibitem{S} V.~Serganova, \emph{Kac–Moody superalgebras and integrability}. Developments and trends in infinite-dimensional Lie theory (2011): 169-218.
	
	\bibitem{S2} V.~Serganova, \emph{On generalizations of root systems}. Comm.~Algebra, 24.13 (1996): 4281-4299.
	
	\bibitem{S3} V.~Serganova, \emph{Automorphisms of simple Lie superalgebras}.  Mathematics of the USSR-Izvestiya, 24.3 (1985): 539.
	
	\bibitem{Sh} A.~Sherman, \emph{Iwasawa decomposition for Lie superalgebras}. J.~Lie Theory, Vol. 32 (2022): 973-996.
	
	\bibitem{Sh2} A.~Sherman, \emph{Spherical indecomposable representations of Lie superalgebras}. J.~Algebra,  Vol.~547 (2020): 262-311.
	
	\bibitem{Sh3} A.~Sherman, \emph{Spherical and Symmetric supervarieties}. PhD Thesis, UC Berkeley.
	
	\bibitem{Sh5} A.~Sherman, \emph{Spherical supervarieties}. Ann. Inst. Fourier (Grenoble), Vol.~71, No.~4 (2021).
	
	\bibitem{ShSi} A.~Sherman and L.~Silberberg, \emph{A queer Kac-Moody construction}.  Preprint arXiv:2309.09559.
	
	\bibitem{T} D.~Timashev, \emph{Homogeneous spaces and equivariant embeddings}. Springer Science \& Business Media, Vol.~138 (2011).
	
	\bibitem{Z} M.~R.~Zirnbauer, \emph{Riemannian symmetric superspaces and their origin in random matrix theory}, J.~Math.~Phys., 37, no. 10 (1996): 4986–5018.
\end{thebibliography}

\textsc{\footnotesize Alexander Sherman, School of Mathematics and Statistics, University of Sydney, Camperdown NSW 2006} 

\textit{\footnotesize Email address:} \texttt{\footnotesize xandersherm@gmail.com}

\end{document}